\documentclass[a4paper, 11pt, reqno]{amsart}
\usepackage[top = 1in, bottom = 1in, left=1.25in, right=1.25in]{geometry}
\usepackage{setspace} 

\usepackage[utf8]{inputenc}
\usepackage[T1]{fontenc}
\usepackage{lineno} 
\newcounter{propcounter}
\usepackage[inline]{enumitem}
\setlist[enumerate,1]{label={\upshape (\roman*)}}

\usepackage{amssymb,amsmath,amsthm,graphicx,xcolor,mathtools,mathrsfs}
\usepackage{comment}

\usepackage{hyperref}
\hypersetup{colorlinks, linkcolor={red!50!black}, citecolor={green!50!black}, urlcolor={blue!50!black}}

\usepackage[
backend=biber,
natbib=true,
style=numeric,
isbn=false,
eprint=false,
url=false,
doi=false,
maxbibnames=9
]{biblatex}
\usepackage{csquotes}
\addbibresource{separating-url.bib}

\renewbibmacro{in:}{}
\DeclareFieldFormat{pages}{#1}
\DeclareFieldFormat[article,misc]{volume}{\mkbibbold{#1}\nopunct}
\DeclareFieldFormat[article,misc]{number}{(#1)}

\setcounter{biburlucpenalty}{1}

\DeclareFieldFormat{title}{\mkbibquote{#1}}
\DeclareFieldFormat[misc]{date}{(#1)}
\DeclareFieldFormat[misc]{note}{#1\nopunct}

\renewbibmacro*{doi+eprint+url}{
	\iftoggle{bbx:url}
	{\iffieldundef{doi}{\usebibmacro{url+urldate}}{}}{}
	\newunit\newblock
	\iftoggle{bbx:eprint}
	{\usebibmacro{eprint}}
	{}
	\newunit\newblock
	\iftoggle{bbx:doi}
	{\printfield{doi}}
	{}
}

\newbibmacro{string+doi}[1]{%
	\iffieldundef{doi}{\iffieldundef{url}{#1}{\href{\thefield{url}}{#1}}}{\href{http://dx.doi.org/\thefield{doi}}{#1}}}	
\DeclareFieldFormat*{title}{\usebibmacro{string+doi}{#1}}

\usepackage[nameinlink]{cleveref}
\theoremstyle{plain}
\newtheorem{theorem}{Theorem}[section]
\crefname{theorem}{Theorem}{Theorems}

\newtheorem{proposition}[theorem]{Proposition}
\crefname{proposition}{Proposition}{Propositions}

\newtheorem{corollary}[theorem]{Corollary}
\crefname{corollary}{Corollary}{Corollaries}

\newtheorem{lemma}[theorem]{Lemma}
\crefname{lemma}{Lemma}{Lemmas}

\crefname{conjecture}{Conjecture}{Conjectures}

\crefname{problem}{Problem}{Problem}

\crefname{claim}{Claim}{Claims}

\newtheorem{observation}[theorem]{Observation}
\crefname{observation}{Observation}{Observations}

\crefname{setup}{Setup}{Setups}

\crefname{myth}{Myth}{Myths}

\crefname{fact}{Fact}{Facts}

\crefname{algorithm}{Algorithm}{Algorithms}

\crefname{remark}{Remark}{Remarks}

\crefname{example}{Example}{Examples}

\theoremstyle{definition}
\newtheorem{definition}[theorem]{Definition}
\crefname{definition}{Definition}{Definitions}

\crefname{construction}{Construction}{Constructions}

\crefname{question}{Question}{Questions}

\numberwithin{equation}{section}

\crefformat{section}{\S#2#1#3}
\crefformat{subsection}{\S#2#1#3}
\crefformat{subsubsection}{\S#2#1#3}
\crefrangeformat{section}{\S\S#3#1#4 to~#5#2#6}
\crefmultiformat{section}{\S\S#2#1#3}{ and~#2#1#3}{, #2#1#3}{ and~#2#1#3}

\DeclareMathOperator{\probability}{Pr}
\DeclareMathOperator{\expectation}{\mathbf{E}}

\newcommand{\eps}{\varepsilon}
\newcommand{\deltaLHC}[2]{\smash{\delta^{\mathrm{LHC}}_{#1, #2}}}
\newcommand{\deltaEST}[2]{\smash{\delta^{\mathrm{EST}}_{#1, #2}}}
\allowdisplaybreaks

\author[M. Rao]{Mengjiao Rao}
\address{Data Science Institute, Shandong University, Jinan,  China.}
\email{mengjiao\_rao@outlook.com}
\author[N.~Sanhueza-Matamala]{Nicol\'{a}s Sanhueza-Matamala}
\address{Departamento de Ingenier\'{\i}a Matem\'{a}tica, Facultad de Ciencias F\'{\i}sicas y Matem\'{a}ticas, Universidad de Concepci\'{o}n, Concepci\'{o}n, Chile.}  \email{nsanhuezam@udec.cl}
\author[L. Sun]{Lin Sun}
\address{School of Mathematics and Statistics, Qingdao University, Qingdao, China.}
\email{sun.lin@qdu.edu.cn}
\author[G. Wang]{Guanghui Wang}
\author[W. Zhou]{Wenling Zhou}
\address{School of Mathematics, Shandong University, Jinan,  China.}
\email{\{\,ghwang\,|\,gracezhou\,\}@sdu.edu.cn}

\title{Degree conditions for spanning expansion hypertrees}

\begin{document}

\begin{abstract}
The $k$-expansion of a graph $G$ is the $k$-uniform hypergraph obtained from $G$ by adding $k-2$ new vertices to every edge.
We determine, for all $k > d \geq 1$, asymptotically optimal $d$-degree conditions that ensure the existence of all spanning $k$-expansions of bounded-degree trees, in terms of the corresponding conditions for loose Hamilton cycles.
This refutes a conjecture by Pehova and Petrova, who conjectured that a lower threshold should have sufficed.
The reason why the answer is off from the conjectured value is an unexpected `parity obstruction': all spanning $k$-expansions of trees with only odd degree vertices require larger degree conditions to embed.
We also show that if the tree has at least one even-degree vertex, the codegree conditions for embedding its $k$-expansion become substantially smaller.
\end{abstract}

\maketitle
\thispagestyle{empty}

\vspace{-2.15em}

\section{Introduction}
Koml\'{o}s, Sark\"{o}zy and Szemer\'{e}di~\cite{Komlos-Sarkozy-Szemeredi-1995} proved that graphs of sufficiently large minimum degree contain all bounded-degree spanning trees as subgraphs.
We wish to generalize this fact to hypergraphs.
A \emph{$k$-uniform $\ell$-tree} with $m$ edges is a $k$-graph admitting an edge ordering $e_1, \ldots, e_m$ such that each $e_i$ shares exactly $\ell$ vertices with one previous edge in the ordering, and $k-\ell$ new vertices. 
We call $k$-uniform $\ell$-trees \emph{loose trees} if $\ell = 1$.
Given a $k$-graph $H$ and a subset $S\subseteq V(H)$, let $\deg_{H}(S)$ denote the number of edges containing $S$ in $H$.
The \emph{minimum $d$-degree} $\delta_{d}(H)$ is the minimum of $\deg_{H}(S)$ over all subsets $S\subseteq V(H)$ of size $d$ and define \emph{maximum $d$-degree} $\Delta_d(H)$ analogously.
We refer to the cases $d=k-1$ and $d=1$ as \emph{codegree} and \emph{vertex degree}, respectively.
 
For $k\ge 3$, Pavez-Sign\'{e}, Sanhueza-Matamala and Stein~\cite{Pavez-Matias-Nicolas-Stein-24} showed that any $n$-vertex $k$-graph with minimum codegree at least $(1/2 + o(1))n$ contains all spanning $(k-1)$-trees of bounded maximum vertex degree.
The constant $1/2$ here is best-possible, since it is necessary for the existence of tight Hamilton paths and cycles.
Pehova and Petrova~\cite{Pehova-Petrova-24} showed that for any $n$-vertex $3$-graph $H$, $\delta_1(H) \geq (5/9 + o(1))\binom{n}{2}$ suffices to embed all loose trees of bounded maximum vertex degree.
Here, the optimality of $5/9$ is not due to Hamilton paths but rather because loose trees can contain perfect matchings (see~\cite[Fig. 1]{Pehova-Petrova-24}), and we need such a minimum vertex degree to ensure their existence~\cite{Han-Person-Schacht-09}.

We thus study loose trees that do not contain perfect matchings.
We consider \emph{$k$-expansion hypertrees}, as proposed by Stein~\cite[\S 9.2]{Stein-2020} as well as~\cite[\S 6]{Pehova-Petrova-24}.
Given a graph $G$, its \emph{$k$-expansion} $G^{(k)}$ is the $k$-graph obtained from $G$ by replacing each edge $uv$ with a $k$-edge containing $u, v$ and $k-2$ new vertices that only participate in this edge.
%Note that $G^{(k)}$ has $|V(G)|+|E(G)|(k-2)$ vertices and $|E(G)|$ edges.
The $k$-expansion of a tree $T$, denoted $T^{(k)}$, is a loose tree; and we say that $T^{(k)}$ is a \emph{$k$-expansion hypertree}.
Note that a $k$-expansion hypertree cannot contain perfect matchings for $k \geq 3$.

\subsection{Embedding expansion hypertrees}
Our main result establishes an optimal $d$-degree condition that ensures the existence of all spanning $k$-expansion hypertrees of bounded maximum vertex degree, based on the existence of loose Hamilton cycles (i.e., spanning $k$-expansions of cycles in graphs).

\begin{theorem} \label{theorem:main}
For all $k > d \geq 1$, $\eps>0$ and $\Delta\in \mathbb N$, there exists an $n_0\in \mathbb N$ such that any $k$-graph $H$ on $n\ge n_0$ vertices with \[\delta_d(H) \geq \left(\max\left\{ \frac{1}{2}, \deltaLHC{k}{d} \right\} + \eps \right) \binom{n-d}{k-d}\] contains $T^{(k)}$ for every tree $T$ with $\Delta_1(T)\le\Delta$ and $|V(T^{(k)})| = n$.
\end{theorem}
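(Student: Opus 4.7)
The plan is to use the absorption method, adapted to the setting of expansion hypertrees. Fix a hierarchy \(1/n \ll \mu \ll \eta \ll \eps, 1/\Delta\), and decompose \(T\) into a small absorbing subtree \(T_A\) with \(|V(T_A^{(k)})| \le \mu n\) and a bounded-degree remainder. The subtree \(T_A\) should be chosen so that it contains many vertex-disjoint pendant gadgets (for instance, small caterpillars hanging off an internal path) that will become the reconfigurable units of the absorber.

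The first technical step is to construct the absorber: a partial embedding \(\phi_A \colon V(T_A^{(k)}) \to V(H)\) with image \(V_A\) of size \(O(\mu n)\), together with enough structural flexibility so that for any ``balanced'' remainder \(Q \subseteq V(H) \setminus V_A\) of appropriate size, \(\phi_A\) can be rerouted to cover exactly \(V_A \cup Q\). The lower bound \(\delta_d(H) \ge (1/2 + \eps)\binom{n-d}{k-d}\) enters here both through a codegree counting argument (showing that every \(d\)-tuple lies in many absorbing gadgets) and through its role in resolving the parity obstruction: when \(T\) has only odd-degree vertices, \(T^{(k)}\) inherits bipartite-type divisibility constraints that force a prescribed ratio between the two ``sides'' of any embedding. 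Accommodating every admissible balance profile requires a gadget for each residue class, and the \(1/2\) density is what guarantees their simultaneous existence.

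The next step is to set aside a reservoir \(\mathcal R \subseteq V(H) \setminus V_A\) of size \(O(\eta n)\) with well-preserved \(d\)-degrees, and use the hypothesis \(\delta_d(H) \ge (\deltaLHC{k}{d}+\eps)\binom{n-d}{k-d}\) to almost-embed the remainder of \(T^{(k)}\) into \(H[V(H) \setminus (V_A \cup \mathcal R)]\), leaving at most \(O(\eta^2 n)\) vertices uncovered. This near-spanning step should follow by decomposing the tree into short loose subpaths (reflecting the structure of the \(k\)-expansion) and embedding them iteratively using the loose Hamilton cycle threshold, either directly or through a hypergraph regularity lemma. A standard reservoir-based connection argument then splices the partial embedding with \(\phi_A\), and the absorbing property of \(\phi_A\) consumes the leftover vertices together with the unused portion of \(\mathcal R\), completing the embedding of \(T^{(k)}\).

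The main obstacle is expected to be the absorber construction, specifically the treatment of the parity obstruction. Enumerating all balance profiles that can arise for \(T^{(k)}\), building an absorbing gadget for each, and packaging them into a single small subtree \(T_A\) which embeds into \(H\) with all gadgets simultaneously present is a delicate structural task. The tightness of the \(1/2\) density at precisely this step is also where the sharpness of \cref{theorem:main} surfaces, mirroring the lower bound construction that refutes the Pehova--Petrova conjecture.
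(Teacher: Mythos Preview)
Your high-level absorption outline (small subtree carrying absorbers, reservoir, almost-spanning embedding via the loose Hamilton cycle threshold, then absorb the leftover) matches the paper's architecture. However, you have misidentified \emph{where} and \emph{why} the constant $1/2$ enters, and this leads you to propose solving a problem that does not actually arise in the proof of \cref{theorem:main}.

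In the paper's proof, the $1/2$ is used for a single, very simple reason: if $\delta_d(H) \ge (1/2+\eps)\binom{n-d}{k-d}$ then any two vertices $u,v$ satisfy $|N_H(u)\cap N_H(v)| \ge 2\eps\binom{n-1}{k-1}$, so there are many $(u,v)$-diamonds. This alone lets one build, for \emph{every} ordered $k$-tuple $(w,w_1,\dots,w_{k-1})$, many absorbing gadgets (an edge $wv_1\cdots v_{k-1}$ together with $(w_i,v_i)$-diamonds), with no parity or balance constraints whatsoever; see \cref{lem:counting-absorb}. Once absorbers exist for all tuples unconditionally, the absorption step is routine. There is no ``enumeration of balance profiles'' and no ``gadget for each residue class'': that machinery belongs to the proof of \cref{theorem:main-even}, where the degree is only $1/3$ and diamonds need \emph{not} exist between arbitrary pairs. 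The parity obstruction you describe is the content of the \emph{lower bound} (\cref{theorem:construct}), not something the upper-bound argument for \cref{theorem:main} has to navigate.

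So your proposal is not wrong in outline, but the part you flag as ``the main obstacle'' is a phantom; pursuing it would send you into the (genuinely delicate) analysis of \cref{section:basicgadgets}--\cref{section:complexgadgets} when none of it is needed here. The actual work specific to \cref{theorem:main} is (i) the diamond-based absorber count above, (ii) immersing the absorbers inside a small subtree (\cref{lem:cover-absorb}), and (iii) the almost-spanning step, which the paper does via blow-up covers of short loose cycles (\cref{thm:blow-up-cycle-tiling} and \cref{almost-cover}) rather than regularity or a path decomposition.
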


The $\deltaLHC{k}{d}$ in the statement is the \emph{asymptotic minimum $d$-degree threshold for loose Hamilton cycles in $k$-graphs} (\cref{def:LHC-threshold}).
We can also define a corresponding threshold $\deltaEST{k}{d}$ for spanning $k$-expansion hypertrees of bounded maximum vertex degree (\cref{def:EST-threshold}).
In a nutshell, \cref{theorem:main} says that $\deltaEST{k}{d} \leq \max\{\frac{1}{2}, \deltaLHC{k}{d} \}$ for all $k > d \geq 1$.
As we shall explain now, in fact the equality $\deltaEST{k}{d} = \max\{\frac{1}{2}, \deltaLHC{k}{d} \}$ holds in all cases.

Pehova and Petrova conjectured~\cite[Conjecture 6.2]{Pehova-Petrova-24} that the degree conditions ensuring loose Hamilton cycles should also give every spanning $k$-expansion hypertree of bounded maximum vertex degree, i.e., they conjectured that $\deltaEST{k}{d} \leq \deltaLHC{k}{d}$ for all $k$ and $d$.
Note that a loose Hamilton path (i.e., a spanning $k$-expansion of a path) is a $k$-expansion hypertree of bounded maximum vertex degree, so the degree condition has to be enough to ensure the existence of such loose paths.
The connection with $\deltaLHC{k}{d}$ arises because the asymptotic degree conditions ensuring the existence of loose Hamilton paths coincide with the asymptotic degree conditions that ensure loose Hamilton cycles (see \cref{section:conclusion-loosecycle} for discussion).
Hence, $\deltaEST{k}{d} \geq \deltaLHC{k}{d}$ holds.
Considering this, the conjecture of Pehova and Petrova essentially says that the loose Hamilton path should be the hardest $k$-expansion hypertree to embed.

We recall the known results on loose Hamilton cycles.
For all $k > d \geq 1$, we know
\begin{equation}
    \deltaLHC{k}{d} \geq 1 - \left(1 - \frac{1}{2(k-1)} \right)^{k-d}.
    \label{equation:lowerbound-deltaLHC}
\end{equation}
This follows from a `space barrier' construction (see \cite[Proposition 1.9]{Gan-Han-Sun-Wang-2023}).
Equality is known to hold for $d = k-1$ and all $k \geq 2$~\cite{Han-Schacht-10,Keevash-Kuhn-Mycroft-Osthus-11, Kuhn-Osthus-06}, and for $d = k-2$ and all $k \geq 3$~\cite{Bastos-Mota-Schacht-Schnitzer-Schulenburg-2017,Bub-Han-Schacht-13}.
%In a later paper, Han and Zhao~\cite{Han-Zhao-15} gave the exact threshold. 
Note that in all the solved cases, we have $\deltaLHC{k}{d} < \frac{1}{2}$ whenever $k \geq 3$.

However, we find a new parity obstacle for the embedding of $k$-expansion hypertrees.
Indeed, we construct a family of $k$-graphs and $k$-expansion hypertrees showing that $\deltaEST{k}{d} \geq 1/2$ for all $k > d \geq 1$ (\cref{theorem:construct}).
Together with the results on $\deltaLHC{k}{d}$ mentioned above, this refutes the conjecture of Pehova and Petrova in general, and shows that \cref{theorem:main} is best-possible.

\subsection{Parity obstructions and trees with even-degree vertices}
The $k$-expansion hypertrees used to show that $\deltaEST{k}{d} \geq 1/2$ are precisely bounded-degree trees all of whose vertices have odd degree (see \Cref{theorem:construct}).
In the opposite case (if $T$ has at least one vertex of even degree), this obstacle disappears.
We wondered if, given the presence of one even-degree vertex in $T$, maybe the degree condition of the loose Hamilton cycle is indeed sufficient to embed the $k$-expansion of those trees.
But this is also not the case: in fact we generalize our parity obstruction and show that for every $1 \leq d < k$, a degree condition of at least $\delta_d(H) \geq (1/q + o(1)) \binom{n-d}{k-d}$ is required to embed all spanning $k$-expansion hypertrees $\smash{T^{(k)}}$ whenever $T$ has only vertices of degree $1 \bmod q$ (\Cref{theorem:construct-q}).

In particular, we still need at least $\delta_d(H) \geq (1/3 + o(1)) \binom{n-d}{k-d}$ to embed spanning $k$-expansion hypertrees, even if we insist that the original tree has vertices of even degree.
Our second main result is that this is best possible for codegree conditions.

\begin{theorem} \label{theorem:main-even}
For all $k \geq 3$, $\eps>0$ and $\Delta\in \mathbb N$, there exists $n_0\in \mathbb N$ such that any $k$-graph $H$ on $n\ge n_0$ vertices with $\delta_{k-1}(H) \geq (1/3 + \eps) n$ contains $T^{(k)}$ for every tree $T$ with $\Delta_1(T)\le\Delta$ and $|V(T^{(k)})| = n$, as long as $T$ has at least one vertex of even degree.
\end{theorem}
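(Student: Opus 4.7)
The plan is to use the absorbing method. Root $T$ at an even-degree vertex $v^*$ and decompose $T$ into a short skeleton path $P$ with bounded-size subtrees attached, designating a small segment $T_0$ of $P$ around $v^*$ that will serve as the ``flexible'' piece targeted by the absorber.

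The argument relies on three workhorses, each provable under $\delta_{k-1}(H) \geq (1/3+\eps)n$: a connecting lemma joining arbitrary $(k-1)$-tuples of vertices by short loose paths; an absorbing lemma producing a substructure $\mathcal{A} \subseteq H$ which realises many disjoint flexible copies of $T_0^{(k)}$ chained into a loose path, with the property that for any vertex set $S$ of size $o(n)$ one can re-embed $\mathcal{A}$ to additionally cover $S$ while still embedding the same combinatorial part of $T^{(k)}$; and an almost-spanning embedding lemma that places the rest of $T^{(k)}$ greedily into the remaining vertices. Since $(1/3+\eps)n$ comfortably exceeds the asymptotic loose Hamilton cycle codegree threshold $\deltaLHC{k}{k-1} = 1/(2(k-1))$, the connecting and almost-spanning lemmas are within standard reach via iterative random-greedy arguments, with room to spare.

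The heart of the proof is the absorber, and this is where the even-degree hypothesis at $v^*$ is essential. The construction behind $\deltaEST{k}{d} \geq 1/2$ in the all-odd-degree case (\Cref{theorem:construct}) forces every embedding to respect a rigid $\mathbb{Z}/2$-colouring of $V(H)$; an even-degree vertex $v^*$ breaks this rigidity, because around $v^*$ the expansion $T_0^{(k)}$ admits two local embeddings with opposite $\mathbb{Z}/2$-signatures. The absorbing gadget is designed to exploit exactly this: at each copy it offers a binary choice between two sub-embeddings that absorb different small patterns of extra vertices, and an appropriate selection across many gadgets absorbs any prescribed leftover of size $o(n)$.

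The main obstacle is verifying that enough vertex-disjoint copies of the gadget can be found in $H$ under a codegree hypothesis as low as $(1/3+\eps)n$: the threshold $1/3$ is dictated by the mod-$3$ construction of \Cref{theorem:construct-q}, and one must confirm that the gadget design does not re-introduce a mod-$3$ obstruction that would push the threshold up further. Once the absorber is assembled into a single loose path $P_{\mathcal{A}}$, the standard endgame---reservoir selection, greedy embedding of the complement via the almost-spanning lemma, connection of the pieces via short loose paths, and activation of the absorber on the final leftover---finishes the proof.
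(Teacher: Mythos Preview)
The central gap is your absorbing lemma. At codegree $(1/3+\eps)n$ you \emph{cannot} build an absorber that covers an arbitrary leftover $S$ of size $o(n)$. The host $H$ may be close to the extremal construction of \Cref{theorem:construct}: a bipartition $\{A,B\}$ in which essentially every edge has odd intersection with $A$. In such an $H$, no local switching gadget can place a leaf at a vertex $w$ using a prescribed $(k-1)$-set $S$ unless $|S\cap A|$ has the correct parity (depending on $w$); so the sentence ``for any vertex set $S$ of size $o(n)$ one can re-embed $\mathcal{A}$ to additionally cover $S$'' is simply false at this codegree. Your proposal acknowledges the $\mathbb{Z}/2$ rigidity but does not explain how the absorber overcomes it, and the single observation that an even-degree vertex admits two embeddings of opposite signature is not enough: one such flip changes the parity of the leftover by one, which does nothing for the bulk of the absorption.

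The paper's proof proceeds via a structural dichotomy on an auxiliary \emph{diamond graph} $G$ (edges $=$ pairs with many common $(k-1)$-neighbours): the hypotheses $\delta_{k-1}\ge(1/3+\eps)n$ force $\alpha(G)\le 2$ and $\delta(G)\ge n/10$, so either $G$ is $\mu$-inseparable, and then diamond-chains exist between all pairs and absorption goes through as in \Cref{theorem:main}; or $G$ has a $\mu$-separation $\{A,B\}$ with both sides inseparable. In the second case absorbers (``parity-absorbers'') exist only for pairs $(w,S)$ with $|S\cap A|\equiv\pi(w)\bmod 2$, and a separate family of \emph{balancer} gadgets (built from $k$-expansions of $K_{2,t}$, found via a proto-balancer supersaturation argument) is used first to equalise $|U\cap A|$ and $|U\cap B|$ in the leftover, so that at every subsequent absorption step a $(k-1)$-set of the required parity is available. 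The even-degree vertex is not used to build the main absorbers at all; it is used exactly once, to produce a single capacity-$1$ balancer that corrects the last parity bit in the very final absorption step (odd $t$ only gives capacity-$2$ balancers). Your plan conflates these roles and omits the dichotomy and the balancing machinery, which is where essentially all the work at threshold $1/3$ lies.
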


We remark that, from the previously mentioned results, we have $\deltaLHC{k}{k-1} = 1/(2k-2)$, which is strictly less than $1/3$ for all $k \geq 3$.
Then our result shows that the loose Hamilton cycle threshold is not enough to embed all $k$-expansion hypertrees, even if we remove the parity obstacle that introduces the term $1/2$ in \cref{theorem:main}.

\subsection{Other results}

From \cref{theorem:main-even} we can obtain, as a simple corollary, that a codegree above $n/3$ suffices to embed bounded-degree expansion hypertrees as long as we leave some constant number of vertices (depending on $k$) uncovered.

\begin{corollary} \label{corollary:almost}
For all $k \geq 3$, $\eps>0$ and $\Delta\in \mathbb N$, there exists $n_0\in \mathbb N$ such that any $k$-graph $H$ on $n\ge n_0$ vertices with $\delta_{k-1}(H) \geq (1/3 + \eps) n$ contains the $k$-expansion of any tree $T$ with $\Delta_1(T)\le\Delta $, and such that $|V(T^{(k)})| \leq n - k + 1$.
\end{corollary}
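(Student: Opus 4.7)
The plan is to reduce \cref{corollary:almost} to \cref{theorem:main-even} by extending $T$ to a slightly larger tree $T^*$ whose $k$-expansion spans an induced sub-hypergraph of $H$ and which contains at least one vertex of even degree.

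Set $t=|V(T)|$, so that $|V(T^{(k)})|=(k-1)t-(k-2)$; in particular every $k$-expansion hypertree has a vertex count congruent to $1 \pmod{k-1}$. Let $n^*$ be the largest integer with $n^* \leq n$ and $n^* \equiv 1 \pmod{k-1}$, so that $n-n^* \leq k-2$ and $n^* \geq n-k+2$, and define $t^*=(n^*+k-2)/(k-1)$. The hypothesis $|V(T^{(k)})| \leq n-k+1$ rearranges to $(k-1)t \leq n-1$, while $t^* \geq n/(k-1) > (n-1)/(k-1) \geq t$; as both $t$ and $t^*$ are integers, this forces $t^* \geq t+1$.

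I would construct $T^*$ by attaching a path of length $t^*-t \geq 1$ to a leaf of $T$, or (in the degenerate case $t=1$) to the single vertex of $T$. Choosing $n_0 \geq 2k-1$ guarantees $t^* \geq 3$, so the attached path has an interior vertex whenever $t=1$. Two properties then follow: $\Delta_1(T^*) \leq \max\{\Delta, 2\}$, since the only modified degree is that of the attachment point, whose degree rises from $1$ to $2$; and $T^*$ has at least one vertex of even degree, either the attachment point itself (now of degree $2$) when $t\geq 2$, or an interior vertex of the appended path when $t=1$. By construction $|V((T^*)^{(k)})|=n^*$, and $T \subseteq T^*$ as graphs.

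Finally, I would delete $n-n^*$ arbitrary vertices of $H$ to obtain $H^*$ on $n^*$ vertices. Since removing a single vertex decreases every $(k-1)$-codegree by at most $1$, for $n_0$ large enough in terms of $k$ and $\eps$ we get $\delta_{k-1}(H^*) \geq \delta_{k-1}(H) - (k-2) \geq (1/3+\eps/2)n^*$. Applying \cref{theorem:main-even} to $H^*$ and $T^*$ with parameters $\eps/2$ and $\max\{\Delta, 2\}$ yields an embedding of $(T^*)^{(k)}$ into $H^*$, which automatically restricts to an embedding of $T^{(k)}$ into $H^* \subseteq H$. The only subtlety is the divisibility and small-case bookkeeping (in particular the $t=1$ case), which amounts to checking that $n_0$ can be chosen large enough to make $t^* \geq 3$; beyond that the argument is entirely mechanical, so I do not anticipate a genuine obstacle.
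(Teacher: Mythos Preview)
Your proposal is correct and follows the same high-level reduction as the paper: extend $T$ so that it gains an even-degree vertex, restrict $H$ to a sub-hypergraph of matching size, and invoke \cref{theorem:main-even}. The implementation differs slightly. The paper adds a single leaf to $T$ and then passes to a random subset $U \subseteq V(H)$ of size $|V(T^{(k)})|+k-1$; since $|U|$ can be as small as $\eps n$, this forces a separate greedy argument for very small trees and a concentration argument to retain the codegree condition on $H[U]$. You instead pad $T$ with a longer path so that $(T^*)^{(k)}$ has $n^*$ vertices with $n-n^*\le k-2$, and then delete only a constant number of vertices from $H$; this sidesteps both the case split and the random-subset step, at the cost of a couple of lines of divisibility bookkeeping. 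Either route works, and yours is marginally more elementary.
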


Finally, we can get a stronger version of \Cref{theorem:main} for $d=k-1$, i.e., for codegree conditions, we can replace the bounded-degree assumption on $T$ for $\Delta_1(T) \leq c n / \log n$.

\begin{theorem} \label{theorem:nlogn}
For all $k \geq 2$, $\eps>0$, there exist $c > 0$ and $n_0\in \mathbb N$ such that any $k$-graph $H$ on $n\ge n_0$ vertices with $\delta_{k-1}(H) \geq ( 1/2 + \eps )n$ contains $T^{(k)}$ for every tree $T$ with $\Delta_1(T) \leq c n / \log n$ and $|V(T^{(k)})| = n$.
\end{theorem}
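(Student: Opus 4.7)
The plan is to follow the architecture of the proof of \Cref{theorem:main} specialized to $d=k-1$, replacing only the bounded-degree tree embedding subroutine with a Koml\'os--Sark\"ozy--Szemer\'edi style procedure that handles maximum degree up to $cn/\log n$. The expansion structure is advantageous here: only the $\Theta(n/(k-1))$ ``graph vertices'' of $T$ need to be placed with care, while the $k-2$ expansion vertices per tree edge can be chosen greedily from any unused pool using the codegree condition, so the genuine obstacle is embedding a tree of order $\Theta(n/(k-1))$ into a dense link-type structure.

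The overall scheme is \emph{absorb-then-embed}. First, using $\delta_{k-1}(H) \geq (1/2+\eps)n$, I would build a small absorber $A \subset V(H)$ of size $o(n)$: a partial embedding of an expansion subtree with the property that any residual set of $o(n)$ vertices can be combined with $A$ to complete into an embedding of a suitable augmented expansion tree. Such an $A$ is assembled from codegree-based absorbing gadgets, exploiting the flexibility that expansion vertices can be placed last. Next, decompose $T = T_0 \cup T_1$, with $T_0$ a small subtree of order $o(n)$ to be realized using $A$, and $T_1$ the bulk; set $H' := H \setminus V(A)$, which still satisfies $\delta_{k-1}(H') \geq (1/2+\eps/2)|V(H')|$. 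After embedding $T_1^{(k)}$ into $H'$, use the absorbing property to swallow the leftover vertices together with $T_0^{(k)}$.

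The main technical step is embedding $T_1^{(k)}$ into $H'$ with $\Delta_1(T_1) \leq cn/\log n$. Following KSS, I would decompose $T_1$ by BFS into a skeleton of depth $O(\log n)$ together with $\Theta(n/\log n)$ pendant subtrees of size $\Theta(\log n)$. Embed the skeleton first (a routine task since its total size is $O(\log n)$), then embed the pendant subtrees iteratively, using the codegree condition to extend each partial embedding: given an embedded $(k-1)$-tuple corresponding to the next tree edge together with its predecessor expansion vertices, there are $\geq (1/2+\eps/2-o(1))|V(H')|$ candidate extensions. A randomized choice at each step, together with a martingale concentration argument (Azuma or bounded-differences), shows that with high probability no vertex of $H'$ is used as an image of a graph vertex more than $O(\log n)$ times, which accommodates $\Delta_1(T_1) \leq cn/\log n$ provided $c$ is small enough. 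The main obstacle I anticipate is simultaneously maintaining the codegree condition in the residual hypergraph and controlling image frequencies throughout this iteration; this is the analogue of the regularity-based bootstrap in the original KSS argument, and should carry over more smoothly here because codegree connectivity is robust under the small deletions incurred at each step.
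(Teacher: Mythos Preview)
Your approach is entirely different from the paper's, and considerably harder. The paper's proof is a three-line reduction to a black box: pick a random $n'$-subset $U\subseteq V(H)$ (where $n'=|V(T)|$), partition $V(H)\setminus U$ into $(k-2)$-sets $Y_1,\dots,Y_{n'-1}$, and for each $i$ define the graph $G_i$ on $U$ with $u_1u_2\in E(G_i)$ iff $\{u_1,u_2\}\cup Y_i\in E(H)$. The codegree condition gives $\delta(G_i)\geq(1/2+\eps/3)n'$, and then the transversal spanning tree theorem of Montgomery--M\"uyesser--Pehova yields a copy of $T$ using one edge per $G_i$; expanding each edge by its $Y_i$ gives $T^{(k)}$. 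The $cn/\log n$ bound is inherited directly from that theorem, and no absorbing is needed.

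Your route has a genuine gap in the ``main technical step''. The sentence claiming that a randomized extension plus Azuma shows ``no vertex of $H'$ is used as an image of a graph vertex more than $O(\log n)$ times'' does not make sense: in an embedding every vertex is used at most once, so this is not the obstruction, and it is not clear what inequality you intend to concentrate or why $\Delta_1(T)\le cn/\log n$ enters. The actual KSS machinery for $\Delta\le cn/\log n$ is substantially more delicate (careful placement of high-degree vertices into large common neighborhoods, not a generic martingale over random leaf-by-leaf extension), and you have not indicated how to recover it in the hypergraph setting. Separately, the absorbing side also needs work: the Immersion Lemma in the paper locates $\Theta(\beta n)$ vertices of a \emph{fixed} degree $t$ at pairwise distance $\ge 5$ in $T$, but when $\Delta_1(T)$ can be $cn/\log n$ the pigeonhole over degrees only guarantees $\Theta(\log n)$ vertices of any fixed degree, which is far too few. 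So both halves of your architecture require nontrivial new ideas, whereas the paper sidesteps all of this with the transversal reduction.
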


\Cref{theorem:nlogn} will be a quick corollary of a result on transversal embeddings of spanning trees by Montgomery, M\"{u}yesser and Pehova~\cite{Montgomery-Muyesser-Pehova-2022}.

\subsection{Ideas behind the proofs} \label{ssection:intro-sketch}
Now we will describe the main ideas behind our proofs of \Cref{theorem:main,theorem:main-even}.
In both proofs, we want to embed a spanning $k$-expansion hypertree $T^{(k)}$ into $H$.
For both of them we will use an `absorption' technique.
Very roughly, there are two main parts to our argument.
First, we embed an `almost spanning' subtree $T_{\mathrm{main}}^{(k)} \subseteq T^{(k)}$.
Secondly, we will find in $H$ a set of gadgets (the `absorbers') that allow us to transform the embedding of $T_{\mathrm{main}}^{(k)}$ into one of $T^{(k)}$.
To perform the first part, we use the `blow-up covers' method of Lang and the second author~\cite{Lang-Sanhueza-2024}, thus avoiding the need for any kind of Hypergraph Regularity Lemma.
This decreases the length and technicality of many of our arguments.

To implement the second part of the argument, we will implement the absorption via switchings.
Switchings have been a common mechanism behind absorption for different types of bounded-degree spanning structures in graphs and hypergraphs, as done by B\"ottcher et al.~\cite{Bottcher-Han-Kohayakawa-Montgomery-Parczyk-Person-2019, Bottcher-Montgomery-Parczyk-Person-2020}, and then applied successfully to the setting of hypertrees~\cite{Chen-Lo-2025, Pavez-Matias-Nicolas-Stein-24, Pehova-Petrova-24}.
In such a setting, we would have an embedding $\phi$ of a $k$-expansion hypertree $\smash{T^{(k)}_0}$ in a host $k$-graph $H$, such that $\smash{T^{(k)}_0}$ is a subgraph of our target $k$-expansion hypertree $T^{(k)}$.
We would like to extend the embedding $\phi$ by `adding a leaf'.
This means that for some $e \in E(T^{(k)}) \setminus E(T^{(k)}_0)$ with $e \cap V(T^{(k)}_0) = \{x\}$, we find an image for $e$ that is consistent with $\phi(x) = w$, which is already embedded.
To do this, we will need to use $k-1$ new unused vertices, say $\{w_1, \dotsc, w_{k-1}\}$.
Our gadgets will find a structure for the tuple $(w, w_1, \dotsc, w_{k-1})$ that modifies the embedding $\phi$ in a constant number of vertices and constructs an embedding for $T^{(k)}_0 \cup e$, thus extending our embedding by one edge.
The mechanism behind our gadgets is explained with more detail in \cref{ssection:sketch-gadgets}, and the strategy to prove \cref{theorem:main} is described with more detail in \cref{ssection:sketch-main}.

The strategy for \cref{theorem:main-even} is similar on the surface, but the construction of absorbing gadgets is more delicate and is the main technical obstacle to overcome in our proofs.
The obstacle appears because the $k$-graph $H$ can have a rigid partite structure, for instance if $V(H)$ has a partition $\{A,B\}$ where all edges have odd intersection with $A$
(this corresponds to $k$-graphs that are subgraphs of the $k$-graphs used in our lower bound in \Cref{theorem:construct}).
In this setting, in fact we cannot build absorbers for arbitrary tuples $(w, w_1, \dotsc, w_{k-1})$ as before.
This will be the case, for instance, if $w \in B$ and the set $\{w, w_1, \dotsc, w_{k-1}\}$ has even intersection with $A$, because then every edge containing $w$, by assumption, will have an odd intersection with $A$.
To perform one absorption step, then we are forced to select sets $\{ w_1, \dotsc, w_{k-1}\}$ in the leftover whose intersection with $A$ is odd.
This could be a problem if, for instance, the leftover has zero vertices in $A$.

To circumvent this issue, we will need to build `balancer gadgets' allowing us to move the vertices in an embedding between the parts $A$ and $B$.
Before proceeding with the absorbing steps, we will perform a `balancing step', where we will ensure the unused vertices in the leftover are roughly balanced between the vertices in $A$ and $B$.
This will ensure that, in every absorbing step, we can select a set $\{w_1, \dotsc, w_{k-1}\}$ with the correct parity constraints, so there is always an available absorber.
The strategy for proving \cref{theorem:main-even} is described with more detail in \cref{ssection:sketch-even}.

\subsection{Organization of the paper}
In \Cref{section:preliminaries} we fix notation and give more detailed sketches of the proofs of our two main results, \Cref{theorem:main} and \Cref{theorem:main-even}.
In \Cref{section:parity} we exhibit parity-based constructions showing that the degree conditions in our main results are best-possible.
In the next three sections we prove \Cref{theorem:main}: in \Cref{section:tools} we gather some facts and known results, in \Cref{section:almost} we show how to embed an almost-spanning $k$-expansion hypertree, and we finish the proof in \Cref{section:proof-main}.
In the following sections we prove \Cref{theorem:main-even}: in \Cref{section:basicgadgets}--\Cref{section:complexgadgets} we describe increasingly complex gadgets that we use in our proofs; in \Cref{section:immersion} we show how to embed a small $k$-expansion hypertree that covers the gadgets correctly, and we finish in \Cref{section:proof-even}.
In \Cref{section:short} we prove \Cref{corollary:almost} and \Cref{theorem:nlogn}, and we finish with concluding remarks in \Cref{section:conclusion}.

\section{Preliminaries} \label{section:preliminaries}
\subsection{Basic notions and notation}
Given a set $V$ and a positive integer $\ell$, we write $\binom{V}{\ell}$ to denote the family consisting of subsets of $V$ of size $\ell$. 
Given $k\ge 2$, let $H$ be a $k$-graph. We write $v_1\dots v_k$ for an edge $\{v_1, \dots, v_k\}$ of $H$.
For $S\subseteq V(H)$, the set of \emph{neighbors} of $S$ in $H$ is defined as
$N_H(S)=\{S'\subseteq V(H)\setminus S: S\cup S'\in E(H)\}$. Clearly, $\deg_H(S)=|N_H(S)|$. 
For simplicity, we write $N_H(v)$, $N_H(uv)$ and $\deg_H(v)$, $\deg_H(uv)$ when $S = \{v\}$ or $\{u, v\}$.
For $U\subseteq V(H)$ and $S\in\binom{V(H)}{d}$, we let
\[\deg_{H}(S,U)=\left|\left\{S' \in N_H(S): S' \subseteq U \right\}\right|.\]

Given a positive integer $t$, we denote the complete bipartite graph on the parts $X$ and $Y$ with $|X|=2$ and $|Y|=t$ by $K_{2,t}$. 
Given a graph $G$ and its \emph{$k$-expansion} $G^{(k)}$, we refer to the original vertices of $G$ as \emph{anchor vertices}.
Anchor vertices are retained in $G^{(k)}$ and may participate in multiple hyperedges.
The newly introduced vertices are called \emph{expansion vertices}; each expansion vertex is specific to a single edge and occurs in exactly one hyperedge of $G^{(k)}$.

Recall that a $k$-uniform $\ell$-tree is a $k$-graph that admits an ordering of its edges $e_1, \dotsc, e_m$ such that each $e_i$ has $\ell$ vertices in a previous edge in the ordering, together with $k-\ell$ new vertices.
Any ordering of the edges in a $k$-uniform $\ell$-tree satisfying this property will be called a \emph{valid ordering}.

Given a tree $T$,
we use the notation $(T, x)$ to denote a rooted tree $T$ with root $x$.
For $v \in V(T)$, we write $T(v)$ for the subtree of $T$ including all descendants of $v$ in $(T, x)$ including $v$, and we use $T^{(k)}(v)$ for the $k$-expansion of $T(v)$.
Let $D(v)$ denote the children of $v$ in $(T, x)$. 
Given a $k$-expansion hypertree $T^{(k)}$,
a \emph{leaf} of $T^{(k)}$ is a $(k-1)$-set 
$\{u_1, u_2, \ldots, u_{k-1}\}$ such that there is an edge $e\in E(T^{(k)})$ containing the $(k-1)$-set and $\deg_{T^{(k)}}(u_i)=1$ for each $i\in [k-1]$. Each vertex in a leaf is called a \emph{leaf vertex} of $T^{(k)}$. 

Given a $k$-graph $H$ and a subgraph $H_0\subset H$, we use $H - V(H_0)$ to denote the remaining subgraph after vertex deletion, and use $H - H_0$ to denote the remaining subgraph after edge deletion and removing any resulting isolated vertices. For two $k$-graphs $H_1$ and $H_2$, an \emph{embedding} from $H_1$ into $H_2$ is an injective edge-preserving mapping $\phi: H_1\to H_2$. Given such a mapping $\phi$, we write $\phi (X):=\{\phi (x): x\in X\}$ for $X\subseteq V(H_1)$ and use $\phi (H_1)$ for the copy of $H_1$ in $H_2$. 
 
We use the hierarchy $x\gg y$ to mean that for $x>0$, there exists $y_0>0$ such that for all $y\leq y_0$ the subsequent statements hold.  Hierarchies with more constants are defined analogously, and should always be read from left to right. 
To simplify presentation, we omit floor and ceiling whenever they are not crucial and treat all large numbers as integers when this does not affect our arguments.

The following simple and well-known proposition will be useful, which allows us to compare minimum $d$-degrees in a $k$-graph, for different values of $d$.

\begin{proposition}\label{degree}
Let $1\le d'\le d <k$ and $H$ be a $k$-graph on $n$ vertices. 
If $\delta_{d}(H) \ge x\binom{n-d}{k-d}$ for some $ 0\le x \le 1 $, then $\delta_{d'}(H) \ge x\binom{n-d'}{k-d'}$.
\end{proposition}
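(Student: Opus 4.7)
The plan is to proceed by a standard double-counting (or equivalently, averaging) argument relating the degrees of a fixed $d'$-set to the degrees of the $d$-sets containing it.

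Fix an arbitrary $S' \in \binom{V(H)}{d'}$. I will count, in two ways, the number of pairs $(S, e)$ such that $S' \subseteq S$, $|S| = d$, $e \in E(H)$ and $S \subseteq e$. Counting by first choosing $S$, the number of $d$-sets $S$ with $S' \subseteq S$ is $\binom{n-d'}{d-d'}$, and for each such $S$ the number of edges through $S$ is at least $x\binom{n-d}{k-d}$ by the hypothesis on $\delta_d(H)$; this gives a lower bound of $\binom{n-d'}{d-d'} \cdot x\binom{n-d}{k-d}$. Counting by first choosing $e$, each edge $e \supseteq S'$ contributes exactly $\binom{k-d'}{d-d'}$ choices of $S$, yielding the upper bound $\deg_H(S') \cdot \binom{k-d'}{d-d'}$. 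Rearranging gives
\[
\deg_H(S') \;\ge\; x \cdot \frac{\binom{n-d'}{d-d'}\binom{n-d}{k-d}}{\binom{k-d'}{d-d'}}.
\]

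The only remaining step is the routine factorial simplification showing that the right-hand side above equals $x\binom{n-d'}{k-d'}$: both $\binom{n-d'}{d-d'}\binom{n-d}{k-d}$ and $\binom{k-d'}{d-d'}\binom{n-d'}{k-d'}$ expand to $\frac{(n-d')!}{(d-d')!\,(k-d)!\,(n-k)!}$, so the ratio equals $\binom{n-d'}{k-d'}$ exactly. Since $S'$ was arbitrary, this gives $\delta_{d'}(H) \ge x\binom{n-d'}{k-d'}$, as required.

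There is no real obstacle here; the proof is essentially one line of double counting plus a symmetric factorial identity. An alternative plan would be to iterate the case $d' = d-1$, proving that in any $k$-graph $\delta_{d-1}(H) \ge \delta_d(H) \cdot \frac{n-d+1}{k-d+1}$ by averaging over the $n-d+1$ extensions of a $(d-1)$-set, and then applying this inequality $d-d'$ times; this avoids the identity above but is slightly longer to write down.
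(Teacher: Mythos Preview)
Your proof is correct; the double-counting argument and the factorial identity are both fine. The paper itself states this proposition as ``simple and well-known'' and does not supply a proof, so there is nothing to compare against---your argument is exactly the standard one.
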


\subsection{Sketch of the proof: switchings} \label{ssection:sketch-gadgets}
Crucial to our proofs is the concept of switchings, which will be used to perform the absorption sketched in \Cref{ssection:intro-sketch}.
Given a $k$-expansion hypertree $T^{(k)}$, a $k$-graph $H$ and a partial embedding $\phi: T^{(k)} \rightarrow H$, we will consider `local switchings', meaning that we will modify the embedding $\phi$ in few vertices as to cover a different set of vertices.
Formally, let $X = \phi(V(T^{(k)}))$ be the image of $\phi$.
Given $X_1 \subseteq X$, $X_2 \subseteq V(H) \setminus X$, and a partial embedding $\phi': T^{(k)} \rightarrow H$, say that $(\phi, \phi')$ is an \emph{$(X_1, X_2)$-switch} if $\phi'(V(T^{(k)})) = (X \setminus X_1) \cup X_2$, and $\phi'$ coincides with $\phi$ in $X \setminus X_1$.
Thus, after performing an $(X_1, X_2)$-switch (i.e., replacing $\phi$ with $\phi'$), the vertices of $X_1$ are not used in the embedding anymore, and the vertices of $X_2$ are now used by the embedding.
This allows us to use the vertices from $X_1$ to embed other types of vertices; this is well-suited to build absorbers.

In our setting, we will initially embed a small subtree $T_0^{(k)} \subseteq T^{(k)}$ with a partial embedding $\phi$ that can be used repeatedly to perform local switches.
To ensure that this can indeed be done many times, we will need to ensure that $\phi$ uses a suitable set of edges.
For instance, suppose that we want to perform an $(X_1, X_2)$-switch.
Suppose that $Y \subset V(T^{(k)})$ is such that $\phi(Y) = X_1$.
Then to get an $(X_1, X_2)$-switch $(\phi, \phi')$, we would need that $\phi'(Y) = X_2$, and for every edge $e, e' \in E(T^{(k)})$ with $e\cap Y=\emptyset$ and $e'\cap Y\neq \emptyset$, we would need that both $\phi(e)=\phi'(e)$ and $\phi(e' \setminus Y ) \cup \phi'(e' \cap Y)$ are edges in $H$.
We will consider two basic instances of this.
Suppose $T$ is a tree and $T^{(k)}$ is its $k$-expansion.

\begin{enumerate}
    \item Let $e \in E(T^{(k)})$ with anchor vertices $u_1, u_2 \in e$ and let $y \in e$ be an expansion vertex.
    Suppose $D \subseteq H$ is a `diamond', i.e., two $k$-edges intersecting in $k-1$ vertices.
    Let $f_1, f_2$ be the two edges of $D$, let $x_1 \in f_1, x_2 \in f_2$ be the two vertices of degree one in $D$.
    We will say that $D$ is an \emph{$(x_1, x_2)$-diamond}.
    
    Suppose then that $\phi: T^{(k)} \rightarrow H$ is a partial embedding such that $\phi(e) = f_1$, $\phi(y) = x_1$, and $x_2 \notin \phi(V(T^{(k)}))$.
    Then we obtain $\phi'$ such that $(\phi, \phi')$ is an $(\{x_1\}, \{x_2\})$-switch just by changing the image of $y$ from $x_1$ to $x_2$. 
    
    \item Now suppose that $u \in V(T)$ is a vertex of degree $t$, with neighbors $v_1, \dotsc, v_t \in V(T)$.
    For each $i \in [t]$, let $e_i \in E(T^{(k)})$ be the edge that contains the anchor vertices $u$ and $v_i$.
    Let $J \subseteq H$ be the $k$-expansion of $K_{2,t}$, say, with vertex classes $\{x_1, x_2\}$ and $\{y_1, \dotsc, y_t\}$, and for each $\ell \in [2]$ and $i \in [t]$,  let $f_{\ell, i} \in E(J)$ be the edge containing $x_\ell$ and $y_i$.
    For each $\ell \in [2]$, let $X_\ell := \bigcup_{i \in [t]} f_{\ell,i} \setminus \{ y_i \}$.
    
    Suppose $\phi: T^{(k)} \rightarrow H$ is a partial embedding such that $\phi(u) = x_1$, $\phi(v_i) = y_i$ and $\phi(e_i) = f_{1,i}$ for each $i \in [t]$, and such that $\bigcup_{i \in [t]} f_{2,i} \setminus \{ y_i \}$ is disjoint from $\phi(V(T^{(k)}))$.
    Then we obtain $\phi'$ such that $(\phi, \phi')$ is an $(X_1, X_2)$-switch by making $\phi'(u) = x_2$ and $\phi'(v_i) = y_i$ and $\phi'(e_i) = f_{2,i}$ for each $i \in [t]$.
\end{enumerate}

We will obtain more complicated gadgets by combining many instances of these two `basic' switches.

\subsection{\texorpdfstring{Sketch of the proof of \Cref{theorem:main}}{Sketch of the proof of Theorem 1.1}} \label{ssection:sketch-main}
Now we consider the setting of \Cref{theorem:main}.
Let $H$ be an $N$-vertex $k$-graph with 
\[\delta_d(H) \geq \left(\max\left\{\frac{1}{2}, \deltaLHC{k}{d}\right\} + \eps\right)\binom{N-d}{k - d},\] 
and let $T$ be an $n$-vertex tree with 
$\Delta_1(T) \le \Delta$ and $N=(k-1)n-k+2$.
The goal is to embed $T^{(k)}$ in $H$.

Our absorbers for this proof will be built as follows.
Suppose $(w, w_1, \dotsc, w_{k-1})$ is a $k$-tuple where $w$ is a vertex in an embedding of a partial $k$-expansion hypertree where we need to place a new leaf, and $w_1, \dotsc, w_{k-1}$ are unused vertices by the embedding.
An absorbing structure for this tuple will be found first by finding an edge $e$ that contains $w$, say $e = wv_1 \dotsc v_{k-1}$.
Then, we find, for every $i \in [k-1]$, a $(w_i, v_i)$-diamond (this step crucially uses that the normalized minimum degree condition is larger than $1/2$).
Say the edges of the diamond are $f^w_i$ containing $w_i$ and  $f^v_i$ containing $v_i$.
If a partial embedding contains the edges $f^v_i$, then by performing switchings we can cover the vertices $w_i$ while at the same time we `free' the vertices $v_i$, after this step we can add the edge $e$ to extend our embedding.

Having the description of our absorbers, now we can describe our strategy.
We start by partitioning $T$ into three subgraphs $T_1, T_2, T_3$ such that
\begin{itemize}
	\item[(i)] $T_1$ and $T_2$ are subtrees of $T$;
	\item[(ii)] $|V(T_1)| \approx \nu n $ and $ |V(T_3)| \approx \alpha n$ for some $ 0 < \alpha \ll \nu \ll \eps$;
	\item[(iii)] $T_2$ is obtained from $T - T_1$ by removing `leaves' one by one.
\end{itemize}

First, we build the absorbing structures.
We find an embedding of $\smash{T_1^{(k)}}$ in $H$, alongside constructing a collection $\mathcal{A}$ of $ \beta n$ disjoint absorbing tuples for some $\nu \gg \beta\gg \alpha$.
These substructures possess two critical properties:
(i) as described before, each absorbing tuple will be capable to absorb $k-1$ vertices (one leaf) at a time, and so $\mathcal{A}$ can absorb any set of $(k-1)\alpha n$ vertices `one by one';
(ii) the embedding of $\smash{T_1^{(k)}}$ covers all members of $\mathcal{A}$.

Secondly, we embed an almost spanning $k$-expansion hypertree.
We find an embedding of $\smash{T_2^{(k)}}$ together with the embedding of $\smash{T_1^{(k)}}$ covering all but $(k-1)\alpha n$ vertices of $H$.
For this, we use \emph{blow-up covers}.
First, we cover almost all vertices with pairwise vertex-disjoint $(\log n)^c$-blow-ups of short loose cycles for some constant $c>0$ (this step uses the degree condition being above the threshold for loose Hamilton cycles).
Next, we decompose $\smash{T_2^{(k)}}$ into some small subtrees of nearly equal size, each of order $o((\log n)^c)$.
Then we embed subtrees, one by one, in the cycle blow-ups.
In order to connect all subtrees, we will choose a reservoir (a very small vertex set $R$) before the embedding of $\smash{T_1^{(k)}}$.
This will ensure that we can use vertices of $R$ to connect every two distinct subtrees.

Having done the last two steps, we can finish by using the absorbers. 
Since $T_3$ is spanned by at most $ \alpha n$ vertices in a valid ordering of  $T-T_1$, we can use $\mathcal{A}$ to complete the embedding of $T^{(k)}$.

\subsection{\texorpdfstring{Sketch of the proof of \Cref{theorem:main-even}}{Sketch of the proof of Theorem 1.2}} \label{ssection:sketch-even}
Our proof for \Cref{theorem:main-even} will also use absorbing and an `almost spanning' step as explained before; in fact we can use the same methods for the `almost spanning' step as before.
This is because the codegree condition is above $1/3$, well above the threshold for the existence of loose Hamilton cycles.
The main difference is the construction of the absorbers: here the minimum codegree conditions are below a half, so we cannot easily find diamonds between every pair of vertices.

The first observation is that to build an absorber for $(w, w_1, \dotsc, w_{k-1})$ as we did before ($w$ is a vertex in an embedding of a partial $k$-expansion hypertree and needs a leaf, and $w_i$ are unused), we can use a weaker property than connecting vertices with one diamond.
Given vertices $u, v$, a \emph{$(u, v, \ell)$-diamond-chain} is given by a sequence of distinct vertices $x_0, \dotsc, x_\ell$ with $x_0 = u$, $x_\ell = v$, and a collection of $\ell$ diamonds $D_1, \dotsc, D_\ell$ such that for each $1 \leq i \leq \ell$, $D_i$ is an $(x_{i-1}, x_i)$-diamond (see \Cref{ssection:vertexgadgets}).
Note that by switching all the diamonds in a $(u, v, \ell)$-diamond-chain simultaneously, we obtain a local switch that allows us to cover either $u$ or $v$ with an embedding.
Thus in our setting we can try to find, instead of a $(w_i, v_i)$-diamond, a $(w_i, v_i, \ell)$-diamond-chain for all $i \in [k-1]$.
If we can indeed find (short) $(w_i, v_i, \ell)$-diamond-chains for all pairs of vertices $w_i, v_i$, we can build absorbers in this way and proceed in a similar way to the proof of \Cref{theorem:main}. 

The strategy changes when we cannot find the diamond-chains as described.
To understand the structure of the host $k$-graph $H$, we define an auxiliary \emph{diamond graph} $G = G(H)$, where two vertices $x, y$ are joined by an edge in $G$ if $H$ contains many $(x, y)$-diamonds.
If there are many paths between $x$ and $y$, we will be able to find $(x, y, \ell)$-diamond-chains between them for some $\ell\in \mathbb N$.
Otherwise, using our relative codegree condition exceeding $1/3$, we can show that $G$ follows a rigid structure: there is a vertex partition $\{A,B\}$ such that we can find many $(x, y, \ell)$-diamond-chains for $x, y \in A$, or for $x, y \in B$; but not necessarily for $x \in A$ and $y \in B$ (see \Cref{ssection:diamond}).

Thus we can assume that we have such a vertex partition $\{A, B\}$, and we will investigate the existence of certain `balancer' gadgets, which are $k$-expansions of $K_{2,t}$ as described before.
An structural analysis will ensure that in fact we can find many such gadgets for any constant $t$; and moreover we will have a lot of freedom to ensure they have a prescribed number of vertices in $A$ and $B$ (see \Cref{ssection:balancers}).
This will mean that by switching between the two possible states of such a balancer gadget, we can modify the leftover to ensure we move $m(t) \in \{1,2\}$ vertices between $A$ and $B$, where $m(t) = 2$ if $t$ is odd, and $m(t) = 1$ if $t$ is even.
However, even though the balancer gadgets are abundant in the host $k$-graph $H$, to be able to incorporate such gadgets in a partial embedding of a $k$-expansion tree, we also need a vertex of degree $t$ in the original tree $T$.
This explains the relevance of even-degree vertices in $T$: they are the only ones that allow us to find a balancer gadgets moving exactly one vertex across $A$ and $B$.

Our construction of absorbers gadgets also needs to incorporate the structure given by the partition $\{A,B\}$.
We define a function $\pi: V(H) \rightarrow \{0,1\}$ where for every $w \in V(H)$, we have that $w$ has a majority of $(k-1)$-edges $W$ in its neighbors such that $|A \cap W| \equiv \pi(w) \bmod 2$.
By combining vertex-switchers and balancers, we will be able to build absorbers for tuples $(w, w_1, \dotsc, w_{k-1})$ as before, but only in the case where $|\{ w_1, \dotsc, w_{k-1}\} \cap A | \equiv \pi(w) \bmod 2$ (see \Cref{section:complexgadgets}).
This is enough to carry the absorbing procedure until the end, as long as from our initial absorbing step the leftover vertices are somewhat equally split between $A$ and $B$.
This last property is achieved by using the balancer gadgets before using any of the absorbers.
Finally, the special gadget built with the even-degree vertex will be used in the very last absorbing step, if necessary, to change the distribution of the last $k-1$ vertices to be embedded between $A$ and $B$.

\section{The parity obstruction} \label{section:parity}
Recall that a $k$-uniform loose cycle is the $k$-expansion of a graph cycle, and such a loose cycle $C \subseteq H$ is a \emph{loose Hamilton cycle} if $V(C) = V(H)$.
Also recall that if $G$ is a graph, then its $k$-expansion $G^{(k)}$ has precisely $|V(G^{(k)})| = |V(G)|+(k-2)|E(G)|$ vertices: this explains the divisibility requisites in the next two definitions.

\begin{definition}[Asymptotic $d$-degree threshold for loose Hamilton cycles]\label{def:LHC-threshold}
Let $k > d \geq 1$ be integers and $n\in \mathbb N$.
For $n$ divisible by $k-1$, let $\deltaLHC{k}{d}(n)$ be the maximum value of $\delta_d(H)$ taken over all $n$-vertex $k$-graphs $H$ that do not have a loose Hamilton cycle.
The \emph{asymptotic minimum $d$-degree threshold for loose $k$-uniform Hamilton cycles} is
\[ \deltaLHC{k}{d} := \limsup_{n \rightarrow \infty} \frac{\deltaLHC{k}{d}(n)}{\binom{n-d}{k-d}}. \]
\end{definition}

\begin{definition}[Asymptotic $d$-degree threshold for $k$-expansion hypertrees] \label{def:EST-threshold}
Let $k > d \geq 1$ be integers and $n, \Delta\in \mathbb N$.
For $n$ such that $n - 1$ is divisible by $k-1$, let $\deltaEST{k}{d}(n, \Delta)$ be the maximum value of $\delta_d(H)$ taken over all $n$-vertex $k$-graphs $H$ that do not contain some $k$-expansion hypertree $T^{(k)}$ with $\Delta_1(T^{(k)}) \leq \Delta$.
The \emph{asymptotic minimum $d$-degree threshold for $k$-expansion hypertrees} $T^{(k)}$ with $\Delta_1(T^{(k)}) \leq \Delta$ is
\[ \deltaEST{k}{d} := \limsup_{\Delta \rightarrow \infty} \limsup_{n \rightarrow \infty} \frac{\deltaEST{k}{d}(n, \Delta)}{\binom{n-d}{k-d}}. \]
\end{definition}

To prove $\deltaEST{k}{d} \geq \delta$, the following construction suffices: there exists a constant $\Delta \geq 1$  such that for any $n_0 \in \mathbb{N}$, we can find an integer $n \geq n_0$ with $k-1$ divides $n-1$, an $n$-vertex $k$-graph $H$ with $\delta_d(H) \geq (\delta + o(1)) \binom{n-d}{k-d}$, and an $n$-vertex $k$-expansion hypertree $T^{(k)}$ with $\Delta_1(T^{(k)}) \leq \Delta$,  for which $T^{(k)} \not\subseteq H$.

Let $T$ be a tree $\Delta(T) \leq 3$ that has only odd-degree vertices
(note that the Handshaking Lemma implies such trees only exist if $|V(T)|$ is even).
Such trees exist, even with bounded maximum degree at most $3$.
For instance, one could consider the tree $T$ obtained from a path $P$ where one leaf is attached to each vertex in the interior of $P$.
By the previous discussion, the following result then proves $\deltaEST{k}{d} \ge 1/2$.

\begin{theorem}\label{theorem:construct}
Let $k > d \geq 1$, let $n$ be even, and let $N = (k - 1)n-k + 2$.
Then there exists an $N$-vertex $k$-graph $H$ with $\delta_d(H) \geq (1/2 - o(1))\binom{N - d}{k - d}$ that does not contain the $k$-expansion of any $n$-vertex tree all of whose degrees are odd.
\end{theorem}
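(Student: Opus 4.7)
The plan is to define $H$ via a bipartition with a parity condition on the edges, after which non-embeddability of the $k$-expansion of any odd-degree tree follows from a one-line global parity count.

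\textbf{Construction and degree estimate.} For $N$ sufficiently large, pick any even integer $a$ with $N/3 \le a \le 2N/3$, partition $V(H) = A \sqcup B$ with $|A| = a$, and set
\[
E(H) := \bigl\{ e \in \binom{V(H)}{k} : |e \cap A| \text{ is odd} \bigr\}.
\]
For any $d$-set $S \subseteq V(H)$, write $a' = |A \setminus S|$ and $b' = |B \setminus S|$; both are $\Theta(N)$. Then $\deg_H(S) = \sum_{j} \binom{a'}{j}\binom{b'}{k-d-j}$, summed over $j$ of a single parity class (determined by $|S \cap A|$). Since
\[
\sum_{j} (-1)^j \binom{a'}{j}\binom{b'}{k-d-j} = [x^{k-d}](1-x)^{a'}(1+x)^{b'}
\]
has absolute value $o\bigl(\binom{N-d}{k-d}\bigr)$ for fixed $k,d$ and $a', b' = \Theta(N)$ with $N \to \infty$ (a routine hypergeometric/generating-function estimate), each parity class contributes $\bigl(\tfrac{1}{2} \pm o(1)\bigr)\binom{N-d}{k-d}$ edges. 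This gives the required uniform lower bound $\delta_d(H) \ge (1/2 - o(1))\binom{N-d}{k-d}$.

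\textbf{Parity obstruction.} Suppose for contradiction that $T^{(k)} \subseteq H$ for some $n$-vertex tree $T$ with every vertex of odd degree. Identify $T^{(k)}$ with its spanning image. We double-count
\[
\sum_{e \in E(T^{(k)})} |e \cap A| = \sum_{v \in A} \deg_{T^{(k)}}(v).
\]
By construction of $H$, every term on the left is odd, so the total is $\equiv |E(T^{(k)})| = n-1 \equiv 1 \pmod{2}$, using that $n$ is even. On the right, every anchor vertex $v$ has $\deg_{T^{(k)}}(v) = \deg_T(v)$, which is odd by hypothesis on $T$; every expansion vertex has degree exactly $1$. Hence every term on the right is odd, and the total is $\equiv |A| \equiv 0 \pmod{2}$, contradicting the previous equality.

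\textbf{Main step.} The whole argument reduces to a global parity count, made possible by the two observations that (i) every edge of $H$ contributes an odd amount to $|e \cap A|$, and (ii) every vertex of $T^{(k)}$ has odd degree (anchor vertices inherit their parity from $T$, and expansion vertices always have degree $1$). The only mild subtlety is ensuring that $|A|$ has the correct parity relative to $N$ (which depends on the parities of $k$ and $n$), but choosing $|A|$ from the interval $[N/3, 2N/3]$ leaves ample room to pick any prescribed parity while keeping both parts of size linear in $N$ for the degree estimate.
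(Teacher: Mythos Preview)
Your proof is correct and follows essentially the same approach as the paper: the same bipartite construction (edges with odd intersection with $A$, $|A|$ even) and the same underlying parity obstruction. Your double-counting identity $\sum_{e}|e\cap A|=\sum_{v\in A}\deg_{T^{(k)}}(v)$ is a cleaner packaging of the parity count than the paper's rooted-tree decomposition into blocks $B(v)$, but both arguments encode exactly the same global parity contradiction between $n-1\equiv 1$ and $|A|\equiv 0\pmod 2$.
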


\begin{proof}
Let $H$ be a $k$-graph on $N$ vertices with vertex
partition $A \cup B = V(H)$, such that $\big| |A| - |B| \big| \leq 1$ and $|A|$ is even.
The edge set of $H$ consists of all $k$-element subsets of $V(H)$ that intersect $A$ in an odd number of vertices.
Observe that for any $1 \leq d \leq k-1$, we have
\[
\delta_d(H) = \left(\frac{1}{2}-o(1)\right) \binom{N-d}{k-d}.
\]

Now, let $T$ be any $n$-vertex tree with only odd-degree vertices.
We claim that $H$ does not contain $T^{(k)}$.
Suppose otherwise, so there exists an embedding $\phi: V(T^{(k)})\to V(H)$.
Fix an arbitrary leaf vertex $x\in V(T)$, and form a rooted tree $(T, x)$ by rooting $T$ at $x$.
For $v\in V(T)$, note that $D(v) \subseteq N_T(v)$ is the set of children neighbors of $v$ in $(T,x)$.
Let $F(v) \subseteq E(T^{(k)}(v))$ be the set of edges of $T^{(k)}$ corresponding to the expanded edges that are formed by expanding edges in $T$ of the form $vw$, with $w \in D(v)$.
Let $e_0\in E(T^{(k)})$ be the unique edge containing $x$.
For each $v \in V(T) \setminus \{x\}$, 
we define $B(v):=V(F(v))\setminus \{ v \} \subseteq V(T^{(k)})$; and for $v = x$ we define $B(v) = e_0 \subseteq V(T^{(k)})$.
Note that we have
\[ V(T^{(k)})=\bigcup_{v\in V(T)}B(v).\]

Since the degree of each vertex of $T$ is odd, $|D(v)|$ is even for all $v\in V(T)\setminus \{x\}$.
This implies that $|\phi(B(v))\cap A|$ is even for all $v\in V(T)\setminus \{x\}$.
On the other hand, we have that
$|\phi(B(x))\cap A| = |\phi(e_0) \cap A|$ is odd by the construction of $H$.
For distinct $v_1,v_2\in V(T)$, we have $B(v_1)\cap B(v_2)=\emptyset$, and therefore 
\[ |A| = |\phi(V(T^{(k)}))\cap A| = \sum_{v \in V(T)} |\phi(B(v))\cap A|, \]
and the last sum must be odd, by our previous observations, but this contradicts our choice of $|A|$.
Hence, $H$ does not contain $T^{(k)}$ as a subgraph, as desired.
\end{proof}

We generalize the construction above to deal with trees whose degrees all are $1 \bmod q$, for some positive integer $q \geq 2$.
Such trees $T$ must satisfy that $|V(T)|-2$ is divisible by~$q$.
Such trees exist with bounded maximum degree at most $q+1$, say by starting from a path $P$ and then adding $q-1$ leaves to each vertex in the interior of $P$.
The next result is an analogous version of \Cref{theorem:construct} for those trees.

\begin{theorem} \label{theorem:construct-q}
    Let $k > d \geq 1$ and $1 < q \leq k$.
    Let $n$ be such that $n-2$ is divisible by $q$, and let $N = (k-1)n - k + 2$.
    Then there exists an $N$-vertex $k$-graph $H$ with $\delta_{d}(H) \geq (1/q -o(1)) \binom{N-d}{k-d}$ that does not contain any $k$-expansion of any $n$-vertex tree with all degrees of value $1$ modulo $q$.
\end{theorem}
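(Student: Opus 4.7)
The plan is to generalize the parity obstruction in \cref{theorem:construct} to a $\mathbb{Z}/q\mathbb{Z}$-valued obstruction. We partition $V(H) = V_0 \cup \dotsb \cup V_{q-1}$ into $q$ classes of sizes $N/q + O(1)$, set $\mathrm{col}(v) = i$ for $v \in V_i$, and extend additively modulo $q$ to subsets by $\mathrm{col}(S) = \sum_{v \in S}\mathrm{col}(v)$. Let the edges of $H$ be exactly the $k$-subsets $e$ with $\mathrm{col}(e) \equiv 1 \pmod q$. We additionally arrange the class sizes to satisfy $\mathrm{col}(V(H)) = \sum_{i=0}^{q-1} i\,|V_i| \not\equiv 1 \pmod q$; since moving a single vertex between two classes shifts $\mathrm{col}(V(H))$ by any prescribed element of $\mathbb{Z}/q\mathbb{Z}$, this is always achievable regardless of the value of $N$.

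For the degree condition, fix a $d$-set $S$; then $\deg_H(S)$ counts the $(k-d)$-subsets $S' \subseteq V(H)\setminus S$ with $\mathrm{col}(S') \equiv 1 - \mathrm{col}(S) \pmod q$. Fourier inversion on $\mathbb{Z}/q\mathbb{Z}$ with $\omega = e^{2\pi i/q}$ expresses this as
\[ \deg_H(S) = \frac{1}{q}\sum_{j=0}^{q-1} \omega^{-j(1-\mathrm{col}(S))}\cdot [z^{k-d}]\prod_{i=0}^{q-1}(1+\omega^{ij}z)^{|V_i \setminus S|}. \]
The $j=0$ term contributes $\frac{1}{q}\binom{N-d}{k-d}$. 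For $j \neq 0$, writing $q' = q/\gcd(j,q) \geq 2$, the value $\omega^j$ is a primitive $q'$-th root of unity, so after regrouping the factors and absorbing the $O(1)$ imbalance among the $|V_i \setminus S|$ the product takes the form $(1 \mp z^{q'})^A\, R(z)$ with $A = \Theta(N)$ and $\deg R = O(1)$. The coefficient of $z^{k-d}$ is therefore $O(N^{(k-d)/q'}) = o(N^{k-d})$, so the non-trivial characters contribute negligibly and $\delta_d(H) \geq (1/q - o(1))\binom{N-d}{k-d}$. I expect this Fourier estimate to be the only mildly technical step.

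For the non-containment, assume an embedding $\phi\colon T^{(k)} \to H$ exists for some $n$-vertex tree $T$ with every degree $\equiv 1 \pmod q$. As $|V(T^{(k)})| = N = |V(H)|$, the map $\phi$ is a bijection onto $V(H)$. Rooting $T$ at any leaf $x$ and defining $D(v), F(v), B(v), e_0$ as in the proof of \cref{theorem:construct}, the sets $\{B(v) : v \in V(T)\}$ partition $V(T^{(k)})$. For $v \neq x$, the edges of $F(v)$ share precisely the vertex $v$ and are otherwise pairwise disjoint, so
\[ \mathrm{col}(\phi(B(v))) \equiv \sum_{e \in F(v)}\mathrm{col}(\phi(e)) - |D(v)|\,\mathrm{col}(\phi(v)) \equiv |D(v)|\,(1 - \mathrm{col}(\phi(v))) \pmod q. \]
Since $\deg_T(v) \equiv 1 \pmod q$ and $v$ has a parent, $|D(v)| = \deg_T(v) - 1 \equiv 0 \pmod q$, so each non-root contribution vanishes. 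The root contributes $\mathrm{col}(\phi(B(x))) = \mathrm{col}(\phi(e_0)) \equiv 1 \pmod q$. Summing over $v \in V(T)$ yields $\mathrm{col}(V(H)) \equiv 1 \pmod q$, contradicting our choice of partition.
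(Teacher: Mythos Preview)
Your proof is correct and follows the same construction and non-containment argument as the paper: a $\mathbb{Z}/q\mathbb{Z}$-coloring with edges being the $k$-sets of color $1$, the class sizes adjusted so the total color is not $1$, and the rooted-tree decomposition into the blocks $B(v)$ to derive the contradiction.

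The only point of divergence is the degree estimate. You compute $\delta_d(H)$ directly for each $d$ via Fourier inversion, bounding the non-principal characters by $O(N^{(k-d)/q'})$. This is correct but heavier than necessary: the paper simply observes that every $(k-1)$-set $S$ has exactly $|V_{1-\mathrm{col}(S)} \setminus S| = N/q + O(1)$ neighbors, giving $\delta_{k-1}(H) = (1/q - o(1))N$ in one line, and then invokes the standard monotonicity $\delta_d(H)/\binom{N-d}{k-d} \geq \delta_{k-1}(H)/(N-k+1)$ (\cref{degree}) to pass to all $d$. Your Fourier argument buys a self-contained computation at the cost of some extra work; the paper's route is shorter.
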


\begin{proof}
    Let $V$ be an $N$-vertex set, and partition it into $q$ parts $V_1, \dotsc, V_q$ of size as equal as possible.
    For every $v \in V$, let $c(v) = i$ if $v \in V_i$.
    For a set $S \subseteq V$, let $c(S) = \sum_{v \in S} c(v)$.

    Note that $c(V) = \sum_{i=1}^{q} i |V_i|$.
    If $c(V) \not\equiv 1 \bmod q$, we fix this partition.
    Otherwise, we move one vertex from $V_1$ to $V_2$.
    This increases $c(V)$ by exactly one. 
    In any case, we obtain a partition where every cluster has size between $\lfloor N/q \rfloor - 1$ and $\lceil N/q \rceil + 1$, and $c(V) \not\equiv 1 \bmod q$.
    
    Let $H$ be a $k$-graph on set $V$ as follows.
    Define the edge set of $H$ as the $k$-sets $S$ such that $c(S) \equiv 1 \bmod q$.
    Note that every $(k-1)$-set has degree $(1/q - o(1))N$, and therefore $\delta_d(H) = (1/q - o(1)) \binom{N-d}{k-d}$ holds for all $1 \leq d < k$.

    Now suppose that $T$ is an $n$-vertex tree all of whose degrees are $1$ modulo $q$.
    We claim that $H$ does not contain $T^{(k)}$.
    Suppose otherwise, so there exists an embedding $\phi: T^{(k)}\to H$.
    Fix an arbitrary leaf vertex $x \in V(T)$, and form a rooted tree $(T, x)$ by rooting $T$ at $x$.
    For $v\in V(T)$, let $D(v) \subseteq N_T(v)$ be the set of children neighbors of $v$ in $(T, x)$.
    Let $F(v) \subseteq E(T^{(k)}(v))$ be the set of edges of $T^{(k)}$ corresponding to the expanded edges that are formed by expanding the edges in $T$ of the form $vw$, with $w \in D(v)$.
    Let $e_0\in E(T^{(k)})$ be the unique edge containing $x$.
    For each $v \in V(T) \setminus \{x\}$, we define $B(v):=V(F(v))\setminus \{ v \} \subseteq V(T^{(k)})$; and for $v = x$ we define $B(v) = e_0 \subseteq V(T^{(k)})$.
    Note that $V(T^{(k)})$ is the vertex-disjoint union of $B(v)$ over all $v \in V(T)$.
    Since the degree of each vertex of $T$ is $1$ modulo $q$, we have $|D(v)| \equiv 0 \bmod q$, for all $v\in V(T)\setminus \{x\}$.
    This implies that, for all $v \in V(T) \setminus \{x\}$, we have 
    \[c(\phi(B(v))) =\sum_{e \in F(v)} c(\phi(e))  - |D(v)|c(\phi(v)) \equiv |D(v)| \equiv 0 \bmod q.\]
    On the other hand, we have that
    $c(\phi(B(x))) = c(\phi(e_0)) \equiv 1 \bmod q$, by the construction of $H$.
    We conclude that
    \[ 1 \equiv c(\phi(B(x))) \equiv \sum_{v \in V(T)} c(\phi(B(v))) \equiv c(V) \not\equiv 1 \bmod q, \]
    a contradiction.
    Hence $H$ does not contain $T^{(k)}$ as a subgraph, as desired.
\end{proof}

Note that \cref{theorem:construct} follows from \cref{theorem:construct-q} by setting $q=2$, so it was not strictly necessary to include it.
However, we decided to emphasize this case for its simplicity, and also because the $k$-graphs appearing in \cref{theorem:construct} (with a partition $\{A,B\}$ of the vertex set, only including the edges with odd intersection with $A$) form the basis for our analysis in the proof of \cref{theorem:main-even}.

\section{Tools} \label{section:tools}
We first recall the definition of the blow-up.
For $k\ge 2$, a {\it blow-up} $F^*$ of a $k$-graph $F$ is obtained by replacing each vertex $x\in V(F)$ by a non-empty vertex sets $V_x$ and each edge $e=\{x_1,\dots ,x_k\}\in E(F)$ by a complete $k$-partite $k$-graph on parts $V_{x_1}, \dots, V_{x_k}$. We refer to the sets $V_x$ as the \emph{clusters} of $F^*$.
An \emph{$m$-blow-up} of $F$ is a blow-up of $F$ where each cluster has size $m$, and we denote it by $F(m)$. For the $m$-blow-up of an edge, we have a classic `supersaturation' result (see, e.g.,~\cite[Corollary 2]{supersaturation}).

\begin{proposition}\label{supersa}
For all $k\geq 2, m\geq 1$ and $\eps'>0$, there are $\eps>0$ and $n_0\in \mathbb N$ such that every $k$-graph $H$ on $n\geq n_0$ vertices with $|E(H)|\geq \eps' n^k$ contains at least $\eps n^{km}$ copies of $K^{(k)}_k(m)$, where $K^{(k)}_k(m)$ denotes the $m$-blow-up of an edge.
\end{proposition}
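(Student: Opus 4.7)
The plan is to reduce the claim to Erd\H{o}s's classical theorem on the existence of a single copy of $K^{(k)}_k(m)$ in dense $k$-graphs, and then upgrade to supersaturation via a random-subset averaging argument. The whole proof is standard and short once one has the existence statement.

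\textbf{Step 1 (existence).} I would first invoke Erd\H{o}s's theorem (1964): for every $k \geq 2$, $m \geq 1$ and $\eps_0 > 0$ there exists $N_0 = N_0(k, m, \eps_0)$ such that every $k$-graph on $N_0$ vertices with at least $\eps_0 N_0^k$ edges contains at least one copy of $K^{(k)}_k(m)$. The usual way to see this is by induction on $k$: for $k=2$ this is the K\H{o}v\'ari--S\'os--Tur\'an theorem, and for $k \geq 3$ one iterates by looking at the link of a carefully chosen $(k-1)$-tuple and applying the inductive statement. I would apply this with $\eps_0 = \eps'/2$ and fix $N_0$ accordingly.

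\textbf{Step 2 (averaging).} Given $H$ on $n$ vertices with $|E(H)| \geq \eps' n^k$ and $n$ sufficiently large compared to $N_0$, pick a uniformly random subset $S \in \binom{V(H)}{N_0}$. Each edge of $H$ survives in $H[S]$ with probability $\binom{n-k}{N_0-k}/\binom{n}{N_0}$, which for $n \gg N_0$ equals $(1 + o(1))(N_0/n)^k$. Hence
\[\expectation[\,|E(H[S])|\,] \;\geq\; (1-o(1))\,\eps'\, N_0^k.\]
Since $|E(H[S])| \leq \binom{N_0}{k}$ deterministically, a standard Markov-type argument shows that with probability at least some constant $c = c(k, m, \eps') > 0$ we have $|E(H[S])| \geq (\eps'/2) N_0^k$, and then by Step~1 the induced subgraph $H[S]$ contains a copy of $K^{(k)}_k(m)$. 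Therefore the number of $N_0$-subsets $S \subseteq V(H)$ containing a copy of $K^{(k)}_k(m)$ is at least $c \binom{n}{N_0}$.

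\textbf{Step 3 (double counting).} Finally I would double-count pairs $(S, K)$ where $S \in \binom{V(H)}{N_0}$ and $K \subseteq H[S]$ is a copy of $K^{(k)}_k(m)$. Each such $K$ has exactly $km$ vertices, and sits inside precisely $\binom{n - km}{N_0 - km}$ choices of $S$. Combining with the lower bound from Step~2, the total number of copies of $K^{(k)}_k(m)$ in $H$ is at least
\[\frac{c\binom{n}{N_0}}{\binom{n-km}{N_0-km}} \;=\; \Theta(n^{km}),\]
which yields the desired $\eps n^{km}$ for a suitable $\eps = \eps(k, m, \eps') > 0$.

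\textbf{Main obstacle.} The only genuinely nontrivial ingredient is Erd\H{o}s's theorem in Step~1; everything else is a routine random-sampling and double-counting calculation. Since that existence result is well established in the literature (and is what is meant by "supersaturation" in the cited reference), the proposition should essentially follow by a black-box invocation. The only care needed is to ensure that the asymptotics $\binom{n-k}{N_0-k}/\binom{n}{N_0} \sim (N_0/n)^k$ and $\binom{n}{N_0}/\binom{n-km}{N_0-km} = \Theta(n^{km})$ are applied for $n$ large enough relative to $N_0$, which is guaranteed by choosing $n_0$ appropriately in terms of $k, m, \eps'$.
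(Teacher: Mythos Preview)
Your argument is the standard Erd\H{o}s--Simonovits supersaturation proof and is correct; the paper itself does not prove this proposition but simply cites it as a classical result (see the reference to \cite[Corollary~2]{supersaturation}). Your three steps---Erd\H{o}s's existence theorem, random-subset averaging, and double counting---are exactly the usual route, so there is nothing to compare and nothing to fix.
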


\subsection{Trees in blow-ups of cycles}
Given an odd (graph) cycle $C$, the following result \cite[Lemma 6.6]{Joos-Kim-Kuhn-Osthus-19}
describes how to embed a bounded-degree tree into an $m$-blow-up of $C$ in a very balanced way.
Its proof is based on analyzing a random walk in a cycle.

\begin{lemma}
\label{lem:cycle-is-covered-by-tree-cite}
Let $n, \Delta, \ell\in \mathbb N$ with $1 > 1/\ell, 1/\Delta \gg 1/n$, where $\ell$ is odd.
Let $m = n/\ell + n/(\log n)^2$.
Let $T$ be a tree on $n$ vertices with $\Delta_1(T) \leq \Delta$
and let $C$ be a cycle of length $\ell$.
Then there is an embedding $\phi$ of $T$ into the $m$-blow-up $C(m)$ such that $\phi(T)$ contains at most $m$ edges between any two clusters of $C(m)$.
\end{lemma}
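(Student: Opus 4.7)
The plan is to define a random embedding by walking on $C$ as we traverse $T$, and then apply concentration. Fix an arbitrary root $r \in V(T)$ and pick a cluster $V_{i_0}$ of $C(m)$ uniformly at random among the $\ell$ clusters. Process the vertices of $T$ in breadth-first order from $r$: for each non-root vertex $v$ whose parent $p(v)$ has been placed in cluster $V_i$, assign $v$ to $V_{i+1}$ or $V_{i-1}$ (indices mod $\ell$) by an independent fair coin flip. Thus the cluster hosting a vertex $v$ is the endpoint of a simple random walk of length $\mathrm{dist}_T(r, v)$ on $C$ started at $V_{i_0}$.

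Let $X_i$ be the number of $T$-vertices placed in cluster $V_i$ and let $Y_i$ be the number of $T$-edges landing between $V_i$ and $V_{i+1}$. Since $\ell$ is odd, the simple random walk on $C$ is aperiodic with uniform stationary distribution, and the uniform choice of $V_{i_0}$ makes the marginal cluster of every vertex exactly uniform on $V(C)$. Hence $\mathbb{E}[X_i] = n/\ell$ and $\mathbb{E}[Y_i] = (n-1)/\ell$ for every $i$. It will suffice to prove that with positive probability every $X_i$ and every $Y_i$ lies within $n/(\log n)^2$ of its mean, because then $X_i, Y_i \leq m$ for all $i$, and one can finish the embedding by injectively distributing the vertices assigned to each cluster among its $m$ slots arbitrarily, using that edges of $\phi(T)$ can only land between adjacent clusters of $C$.

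The concentration step is where the main effort lies. Expose the coin flips in BFS order and form the Doob martingale for each $X_i$ and $Y_i$. The key observation is that flipping a single coin simply reverses the subsequent walk by $\pm 2$ on the odd cycle $C$, and because the walk mixes in $O(\ell^2)$ steps, the expected magnitude of the resulting shift in any fixed $X_i$ or $Y_i$ is $O(1)$ per exposed coin. With this bounded-differences bound, Azuma's inequality gives subgaussian tails at scale $O(\sqrt{n})$; a union bound over the $O(\ell)$ events completes the argument, since $\sqrt{n} \ll n/(\log n)^2$ for large $n$.

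The main obstacle is making the bounded-differences bound rigorous: the host cluster of $v$ depends on \emph{every} coin along the path from $r$ to $v$, so flipping an early coin could naively shift an entire subtree's worth of vertices between clusters. Overcoming this requires coupling the walks before and after the flip and exploiting rapid mixing on the odd cycle to show that, once projected onto any single cluster count, the shift has expected absolute size $O(1)$ regardless of subtree size. With this input the concentration and hence the lemma follow.
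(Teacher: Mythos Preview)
Your high-level plan---assign clusters via a simple random walk on the odd cycle $C$ as you traverse $T$ in BFS order, then prove concentration for the cluster counts---matches exactly what the paper says about this lemma: it is quoted from Joos--Kim--K\"uhn--Osthus with the remark that ``its proof is based on analyzing a random walk in a cycle,'' and no proof is given here.

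There is, however, a real gap in your concentration step. Your key claim is that the Doob martingale increments for $X_i$ (or $Y_i$) are $O(1)$, because ``the walk mixes in $O(\ell^2)$ steps, so the expected magnitude of the resulting shift is $O(1)$ per exposed coin.'' This is false as stated. When you reveal the coin for a vertex $v$ whose parent sits in cluster $j$, the change in the conditional expectation of $X_i$ is
\[
\tfrac12\sum_{u\in T(v)}\bigl(p_{d(u,v)}(j{+}1,i)-p_{d(u,v)}(j{-}1,i)\bigr),
\]
where $p_d(\cdot,\cdot)$ is the $d$-step transition probability on $C$. Mixing gives $|p_d(j{+}1,i)-p_d(j{-}1,i)|\le 2\lambda_*^d$ with $\lambda_*=\cos(\pi/\ell)$, but the number of descendants of $v$ at depth $d$ can be as large as $(\Delta-1)^d$. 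For any $\Delta\ge 3$ and $\ell\ge 5$ one has $(\Delta-1)\lambda_*>1$, so the geometric series $\sum_d (\Delta-1)^d\lambda_*^d$ diverges; on a near-complete $\Delta$-ary subtree the increment is of order $|T(v)|^{1-\epsilon}$ for some $\epsilon=\epsilon(\Delta,\ell)>0$, not $O(1)$. In particular, your appeal to ``bounded differences $O(1)$ $\Rightarrow$ Azuma at scale $\sqrt n$'' does not go through, and replacing the a.s.\ bound by an ``expected absolute shift $O(1)$'' is neither true nor what Azuma uses.

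The approach is salvageable, but it needs a genuinely sharper accounting than what you wrote. One route is to bound not $\max_t c_t$ but $\sum_t c_t^2$: using $c_t\lesssim \sum_{d}n_d(v)\lambda_*^d$ and interchanging sums, one gets $\sum_t c_t^2\lesssim \sum_{u,u'}\lambda_*^{\,d_T(u,u')}$, and controlling this double sum (or, alternatively, decomposing $T$ into bare paths so that each path contributes $O(1)$ per exposed coin) is the substantive work you have not done. Without one of these refinements the argument is incomplete.
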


\cref{lem:cycle-is-covered-by-tree-cite} implies the following result for the embedding of $k$-expansion hypertrees into blow-ups of loose cycles.

\begin{lemma}\label{lem:cycle-is-covered-by-tree}
Let $k\ge 3$ and $n, \Delta, \ell\in \mathbb N$ with $1 > 1/\ell, 1/\Delta \gg 1/n$, where $\ell$ is odd.
Let $m = n/\ell + n/(\log n)^2$.
Let $T$ be a tree on $n$ vertices with $\Delta_1(T) \leq \Delta$, and let $H$ be the $m$-blow-up of a $k$-uniform loose cycle of length $\ell$.
Then there is an embedding $\psi$ of $T^{(k)}$ into $H$ such that for each cluster $X$ of $H$,
\[
\frac{n}{\ell}-\frac{((\ell-1)(k-1) - 1)n}{(\log n)^2} - (k-2) \le |V(\psi(T^{(k)}))\cap X|\le \frac{n}{\ell}+\frac{n}{(\log n)^2}.
\]
\end{lemma}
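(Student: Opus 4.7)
The plan is to reduce to the graph version via \cref{lem:cycle-is-covered-by-tree-cite} and then greedily fill in the expansion vertices. Let $C_\ell$ be the underlying graph cycle and set $m = n/\ell + n/(\log n)^2$, so that $H$ is the $m$-blow-up of $C_\ell^{(k)}$. Since $\ell$ is odd, applying \cref{lem:cycle-is-covered-by-tree-cite} to $T$ and $C_\ell$ yields an embedding $\phi\colon T \to C_\ell(m)$ with the property that for every pair of consecutive anchor clusters $X, Y$ of $C_\ell(m)$, the number $p_{X,Y}$ of edges of $\phi(T)$ routed between $X$ and $Y$ satisfies $p_{X,Y} \le m$.

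Next, I extend $\phi$ to an embedding $\psi\colon T^{(k)} \to H$ by greedily placing the $k-2$ expansion vertices of each hyperedge. Enumerate the edges of $T$ arbitrarily and, for each $uv \in E(T)$ with $\phi(u) \in X$ and $\phi(v) \in Y$, select one previously unused vertex from each of the $k-2$ expansion clusters $Z_1, \ldots, Z_{k-2}$ of $H$ that lie between $X$ and $Y$, and use these as the images of the $k-2$ expansion vertices of the hyperedge of $T^{(k)}$ corresponding to $uv$. Since each expansion vertex of $T^{(k)}$ belongs to a unique hyperedge, selections from different edges of $T$ never compete for the same preimage; and since at most $p_{X,Y} \le m = |Z_j|$ edges of $T$ use any cluster $Z_j$, an unused target is always available. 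Thus $\psi$ is a well-defined injective, edge-preserving embedding.

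It remains to verify the bounds on $|V(\psi(T^{(k)})) \cap W|$ for each cluster $W$ of $H$. The upper bound $|V(\psi(T^{(k)})) \cap W| \le m = n/\ell + n/(\log n)^2$ is immediate from $|W|=m$ and injectivity. For the lower bound I would split into two cases and apply pigeonhole. If $W$ is an anchor cluster, then $\sum_{W'} |V(\phi(T)) \cap W'| = n$ over all $\ell$ anchor clusters and each summand is at most $m$, so $|V(\phi(T)) \cap W| \ge n - (\ell-1)m = n/\ell - (\ell-1)n/(\log n)^2$. If $W$ is an expansion cluster associated with the edge $XY$ of $C_\ell$, its image size is exactly $p_{X,Y}$, and since $\sum_e p_e = |E(T)| = n-1$ over the $\ell$ edges of $C_\ell$ with each $p_e \le m$, we get $p_{X,Y} \ge (n-1) - (\ell-1)m = n/\ell - (\ell-1)n/(\log n)^2 - 1$. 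Both estimates are stronger than the claimed $n/\ell - ((\ell-1)(k-1)-1)n/(\log n)^2 - (k-2)$ whenever $k,\ell \ge 3$, since $(\ell-1)(k-2) \ge 1$ in that regime. The only real obstacle is the bookkeeping needed to route expansion vertices without collisions, and this is guaranteed by the edge-balance condition from \cref{lem:cycle-is-covered-by-tree-cite}.
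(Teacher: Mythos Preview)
Your proof is correct and follows essentially the same approach as the paper: apply \cref{lem:cycle-is-covered-by-tree-cite} to embed $T$ into $C_\ell(m)$, extend greedily to the expansion clusters using the edge-balance condition, and deduce the cluster bounds. The only difference is that you argue the lower bound by a separate pigeonhole for anchor and expansion clusters (obtaining a sharper estimate), whereas the paper applies a single pigeonhole over all $\ell(k-1)$ clusters using $|V(T^{(k)})| = (k-1)n - k + 2$.
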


\begin{proof}
    Let $C = v_1 \dotsb v_\ell$ be a cycle on $\ell$ vertices and let $C(m)$ be its $m$-blow-up.
    By \cref{lem:cycle-is-covered-by-tree-cite}, there is an embedding $\phi: T \rightarrow C(m)$ such that $\phi(T)$ contains at most $m$ edges between any two clusters of $C(m)$.
    Next, let $T^{(k)}$ be the $k$-expansion of $T$, and for every edge $e \in E(T)$ let $X_e$ be the $(k-2)$-set of vertices, which together with $e$ forming an edge  in $T^{(k)}$, so that $V(T^{(k)}) = V(T) \cup \bigcup_{e \in E(T)} X_e$.
    Since a $k$-uniform loose cycle is precisely the $k$-expansion of a cycle, we can assume $C^{(k)}$ is the $k$-expansion of $C$ and for each $f \in E(C)$ there exists a $(k-2)$-set $Y_f$ such that $V(C^{(k)}) = V(C) \cup \bigcup_{e \in E(C)} Y_e$.
    Abusing notation, we can suppose that the clusters in $H = C^{(k)}(m)$ are the clusters of $C(m)$ for each $v \in V(C)$, together with extra $k-2$ new clusters $Y^1_f, \dotsc, Y^{k-2}_f$ for each edge $f \in E(C)$.
    We define an embedding $\psi$ of $T^{(k)}$ into $H$ by noting that, since $V(T) \subseteq V(T^{(k)})$ and $V(C) \subseteq V(C^{(k)})$, we can assume $\psi(v) = \phi(v)$ for each $v \in V(T)$; and for each $e = \{x, y\} \in E(T)$, we map $\phi(X_e)$ by assigning one vertex to each the $k-2$ clusters $Y^1_f, \dotsc, Y^{k-2}_f$, where $f = \{\phi(x), \phi(y)\}$.    
    Note that each the number of edges mapped to each $Y^1_f, \dotsc, Y^{k-2}_f$ is precisely the number of edges mapped via $\phi$ between the clusters corresponding to the two vertices of $f$; so the choice of $\phi$ ensures that at each cluster of $H$ indeed receives at most $m$ vertices.
    The upper bound in the claim follows.
    The lower bound follows by noting that $|V(T^{(k)})| = n(k-1) - k+2$ and there are exactly $\ell(k - 1)$ clusters in $H$, so each cluster receives at least $|V(T^{(k)})| - (\ell(k-1) - 1)m$ vertices.
\end{proof}

\subsection{Blow-up covers}
Given a property $\mathcal P$ and a $k$-graph $H$ satisfying $\mathcal P$, we are interested in which subgraphs of $H$ inherit the property $\mathcal P$.
Following the work of  Lang and the second author \cite{Lang-23, Lang-Sanhueza-2024}, this is formalized in terms of the property graph.

\begin{definition} [Property graph]
For a $k$-graph $H$ and a family $\mathcal P$ of $s$-vertex $k$-graphs, the {\it property graph}, denoted by $P^{(s)}(H; \mathcal P)$, is the $s$-graph on vertex set $V(H)$ with an edge $S \subseteq V(H)$ whenever the induced subgraph $H[S]$ satisfies $\mathcal P$, that is $H[S] \in \mathcal P$.
\end{definition} 

Given a property $\mathcal P$ and a $k$-graph $H$ satisfying $\mathcal P$, if the minimum vertex degree of the property graph $P^{(s)}(H; \mathcal P)$ goes beyond the minimum vertex degree threshold that forces a perfect matching, then $H$ can be almost covered by vertex-disjoint blow-ups of subgraphs of $H$ that inherit the property $\mathcal P$. 
The version we use is a direct consequence of~\cite[Lemma 7.7]{Lang-Sanhueza-2024}, and allows us to select the clusters of these blow-ups to have a size that tends to infinity with $n$.

\begin{lemma}\label{lem:blow-up-tiling}
Let $n,m,s,k\in \mathbb N$, $ 1/k, 1/s \gg c, \eta, \mu\gg 1/n $ and $1\le m \le (\log n)^c$. For every $s$-vertex $k$-graph property $\mathcal P$ and $k$-graph $H$ on $n$ vertices with 
$$\delta_1(P^{(s)}(H; \mathcal P)) \geq \left( 1 - \frac{1}{s} + \mu \right) \binom{n-1}{s - 1},$$ 
all but at most $ \eta n$ vertices of $H$ may be covered with pairwise vertex-disjoint $m$-blow-ups of members of $\mathcal P$.
\end{lemma}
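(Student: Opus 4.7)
The plan is to derive the lemma from Lemma~7.7 of~\cite{Lang-Sanhueza-2024}, which the authors cite as the source. That result provides a near-perfect blow-up cover of exactly the form we need, but with a cluster size $m$ that is an absolute constant depending on $s, k, \eta, \mu$. Our enhancement, as flagged in the text, is to allow any $m \le (\log n)^c$ for some tiny $c > 0$. The degree hypothesis on $P := P^{(s)}(H;\mathcal P)$ is precisely the vertex-degree threshold that forces an almost-perfect matching in the $s$-graph $P$, and the proof of Lemma~7.7 already promotes such an almost-matching to a blow-up cover via a supersaturation-style extraction step.

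First, I would apply Lemma~7.7 with cluster size equal to a suitably large absolute constant $m_0 = m_0(s, k, \eta, \mu)$ to obtain pairwise disjoint vertex sets $U_1, \dotsc, U_t \subseteq V(H)$, each supporting an $m_0$-blow-up of some member of $\mathcal P$, whose union covers all but $(\eta/2)n$ vertices of $H$. Next, I would lift the cluster size from $m_0$ up to $m \le (\log n)^c$ by inspecting the proof of Lemma~7.7: its only use of the absolute-constant cluster-size bound is as input to a supersaturation step of the type of \cref{supersa}, and \cref{supersa} continues to produce $\Omega(n^{km})$ copies of $K_k^{(k)}(m)$ for any $m \le (\log n)^c$ because the supersaturation constant is polynomial in $n$ while $(\log n)^c = n^{o(1)}$. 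Equivalently, one can bootstrap by combining $\lceil m/m_0 \rceil^s$ same-type $m_0$-blow-ups into larger $m$-blow-ups using a small random reservoir, with room provided by the slack $\mu$ in the degree condition.

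The key technical point is to check that every quantitative ingredient in the proof of Lemma~7.7, namely the supersaturation constant, any union bound over matched $s$-sets, and the error accumulated when rounding cluster sizes, tolerates a cluster size that grows with $n$. The hierarchy $1/k, 1/s \gg c, \eta, \mu \gg 1/n$ leaves ample slack for all these checks, so no conceptual input beyond Lemma~7.7 and \cref{supersa} is required. The main obstacle is purely bookkeeping: confirming that replacing a constant $m_0$ by a slowly growing $m$ does not spoil any of the estimates in the cited proof, and that the small number of vertices lost at each stage (from the slack in the matching and from the supersaturation extraction) sums to at most $\eta n$ overall.
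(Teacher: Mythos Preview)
The paper does not provide a proof of this lemma; it simply states, in the sentence immediately preceding the statement, that the result ``is a direct consequence of~\cite[Lemma 7.7]{Lang-Sanhueza-2024}'' and remarks that this version ``allows us to select the clusters of these blow-ups to have a size that tends to infinity with $n$.'' Your proposal is entirely consistent with this: you too identify Lemma~7.7 of Lang--Sanhueza-Matamala as the source, and you go further by sketching how the growing cluster size $m \le (\log n)^c$ can be accommodated via the supersaturation step. Since the paper treats the derivation as a black box, there is nothing in its own proof to compare against beyond the shared citation.
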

 
Let $\mathcal P(\delta, \eps, d, k)$ be a family of $k$-graphs $H$ with $\delta_d(H) \geq (\delta + \eps)\binom{|V(H)|-d}{k - d}$.
The next lemma \cite[Lemma 4.9]{Lang-23} shows that $\mathcal P(\delta, \eps, d, k)$ satisfies a local inheritance principle.

\begin{lemma}\label{lem:Inheritance-mini-degree}
For $1/k, \eps \gg 1/s \gg 1/n$, let $H$ be an $n$-vertex $k$-graph such that $H\in \mathcal P(\delta, \eps, d, k)$.
Then the property $s$-graph 
$P := P^{(s)}(H; \mathcal P(\delta, \eps/2, d, k))$
satisfies
$$\delta_1(P) \geq \left( 1 -  e^{-\sqrt{s}}\right) \binom{n-1}{s - 1}.$$
\end{lemma}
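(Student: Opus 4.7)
My plan is to give a standard probabilistic ``random sampling inheritance'' argument. Fix a vertex $v \in V(H)$; it suffices to show that if $S$ is a uniformly random $s$-subset of $V(H)$ with $v \in S$, then
\[
\Pr\!\left[\exists\, T \in \tbinom{S}{d}: \deg_{H[S]}(T) < (\delta + \eps/2)\tbinom{s-d}{k-d}\right] \le e^{-\sqrt{s}},
\]
since then the number of ``good'' $(s-1)$-subsets $S' \subseteq V(H)\setminus\{v\}$ (those for which $H[\{v\}\cup S'] \in \mathcal{P}(\delta,\eps/2,d,k)$) is at least $(1 - e^{-\sqrt{s}})\binom{n-1}{s-1}$, which is exactly the required lower bound on $\deg_P(v)$. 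The bound will be obtained by fixing $T$, applying a concentration inequality to $\deg_{H[S]}(T)$, and then taking a union bound over the at most $n^d$ choices of $T$.

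Fix $T \in \binom{V(H)}{d}$ and condition on $T \cup \{v\} \subseteq S$. Under this conditioning, $S$ consists of $T \cup \{v\}$ together with a uniformly random subset of $V(H)\setminus(T\cup\{v\})$ of the appropriate size. The random variable $X := \deg_{H[S]}(T)$ counts $(k-d)$-subsets $T' \subseteq S\setminus T$ with $T\cup T' \in E(H)$, and a direct computation gives
\[
\mathbb{E}[X \mid T\cup\{v\}\subseteq S] = \deg_H(T) \cdot \frac{\binom{s-d}{k-d}}{\binom{n-d}{k-d}} \ge (\delta + \eps)\tbinom{s-d}{k-d}.
\]
Moreover, swapping a single element of $S$ alters $X$ by at most $\binom{s-1}{k-d-1}$, so McDiarmid's bounded-differences inequality (in the form for sampling without replacement) yields
\[
\Pr\!\left[X < (\delta + \eps/2)\tbinom{s-d}{k-d}\,\Big|\,T\cup\{v\}\subseteq S\right] \le 2\exp(-c\eps^{2}s)
\]
for some constant $c=c(k)>0$, since after cancellation the exponent simplifies to $\Theta(\eps^{2}s)$ regardless of the value of $k-d\ge1$.

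Finally, since $\Pr[T\subseteq S \mid v\in S] \le (s/n)^{|T\setminus\{v\}|}$, summing over the at most $n^d$ choices of $T$ bounds the total ``bad'' probability by $O(s^{d}\exp(-c\eps^{2}s))$, which is at most $e^{-\sqrt{s}}$ for $s$ sufficiently large, by the hierarchy $\eps\gg 1/s$. The only real subtlety is the concentration step: I expect the main obstacle to be verifying that the Lipschitz constants and the expectation scale compatibly in $s$, so that the McDiarmid exponent is genuinely of order $\eps^{2}s$ and thus comfortably defeats the polynomial-in-$s$ union bound. This is essentially what the hypothesis $1/s \ll \eps$ is designed to guarantee.
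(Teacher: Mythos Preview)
The paper does not prove this lemma; it is quoted verbatim as \cite[Lemma 4.9]{Lang-23}, so there is no in-paper argument to compare against. Your proposal is the standard random-sampling inheritance argument and is essentially correct: fixing $v$ and a $d$-set $T$, conditioning on $T\cup\{v\}\subseteq S$, the concentration step goes through exactly as you outline (indeed the paper itself later invokes the very inequality you need, \cref{lemma:GIKM}, with $z=\binom{s-d-1}{k-d-1}$ and $t=(\eps/2)\binom{s-d}{k-d}$, giving an exponent of order $\eps^2 s/(k-d)^2$), and the union bound collapses to $\binom{s}{d}\cdot 2\exp(-c\eps^2 s)\le e^{-\sqrt{s}}$ for $s$ large in terms of $\eps,k$. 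One cosmetic point: rather than writing the union bound as a sum over $n^d$ sets weighted by $(s/n)^{|T\setminus\{v\}|}$, it is cleaner to observe directly that $\sum_T \Pr[T\subseteq S\mid v\in S]=\mathbb{E}\bigl[\binom{|S|}{d}\bigr]=\binom{s}{d}$, which gives the same $s^d$ factor without any reference to $n$.
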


Using \cref{lem:blow-up-tiling} and \cref{lem:Inheritance-mini-degree}, we can get the following result.

\begin{theorem}\label{thm:blow-up-cycle-tiling}
Let $k,d,\ell,m,n$ be positive integers with $k>d \ge 1$, $1/k, \eps \gg 1/\ell \gg c, \eta \gg 1/n$, and $m=(\log n)^c$. Let $H$ be an $n$-vertex $k$-graph with 
\[\delta_d(H) \geq \left( \deltaLHC{k}{d} + \eps\right)\binom{n-d}{k - d}.\]
Then all but at most $ \eta n$ vertices of $H$ can be covered with pairwise vertex-disjoint $m$-blow-ups of loose cycles of length $\ell$.
\end{theorem}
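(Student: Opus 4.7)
The plan is to deduce the statement by combining the minimum-degree inheritance lemma (\cref{lem:Inheritance-mini-degree}) with the blow-up cover lemma (\cref{lem:blow-up-tiling}), applied to the property ``has minimum $d$-degree above the asymptotic loose Hamilton cycle threshold'', and then upgrading the resulting blow-ups of members of this family to blow-ups of actual loose cycles of length $\ell$.

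Concretely, I would set $s := \ell(k-1)$ (the vertex count of a loose cycle of length $\ell$) and take $\calQ := \calP(\deltaLHC{k}{d}, \eps/2, d, k)$, i.e.\ the family of $s$-vertex $k$-graphs $F$ with $\delta_d(F) \geq (\deltaLHC{k}{d}+\eps/2)\binom{s-d}{k-d}$. Since $H \in \calP(\deltaLHC{k}{d}, \eps, d, k)$ by hypothesis, \cref{lem:Inheritance-mini-degree} yields
\[
\delta_1\bigl(P^{(s)}(H;\calQ)\bigr) \;\geq\; \bigl(1 - e^{-\sqrt{s}}\bigr)\binom{n-1}{s-1}.
\]
Within the hierarchy $\eps \gg 1/\ell$ I would then pick $\ell$ large enough that $e^{-\sqrt{s}} \leq 1/(2s)$, and set $\mu := 1/(2s)$, so that $1/s \gg \mu \gg 1/n$ and $\delta_1(P^{(s)}(H;\calQ)) \geq (1-1/s+\mu)\binom{n-1}{s-1}$. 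Applying \cref{lem:blow-up-tiling} with this property and $m = (\log n)^c$ produces a family of pairwise vertex-disjoint $m$-blow-ups of members of $\calQ$ covering all but at most $\eta n$ vertices of $H$.

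To finish, I would replace each blown-up $F \in \calQ$ in the cover by the blow-up of a loose cycle on the same clusters. Since $\ell$ is large relative to $1/\eps$, and since $s = \ell(k-1)$ is divisible by $k-1$, the definition of $\deltaLHC{k}{d}$ as a limsup ensures that every $F \in \calQ$ contains a loose Hamilton cycle $C_F$ of length $\ell$. As $V(C_F) = V(F)$ and $E(C_F) \subseteq E(F)$, the $m$-blow-up of $C_F$ on the clusters used for the $m$-blow-up of $F$ is a sub-$k$-graph of the $m$-blow-up of $F$, and hence of $H$. Substituting each blown-up $F$ by the corresponding blown-up $C_F$ gives the required cover.

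The only non-routine point is verifying that the choice of $s = \ell(k-1)$ is simultaneously large enough for two things: to push $e^{-\sqrt{s}}$ below $1/s$ (so that \cref{lem:blow-up-tiling} is applicable), and to make the minimum $d$-degree threshold for loose Hamilton cycles on $s$-vertex $k$-graphs fall below $\deltaLHC{k}{d}+\eps/2$. Both requirements are handled by taking $\ell$ large relative to $k$ and $\eps^{-1}$, which is allowed by the stated hierarchy $1/k, \eps \gg 1/\ell \gg c, \eta \gg 1/n$.
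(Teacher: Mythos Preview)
Your proposal is correct and follows essentially the same route as the paper: set $s=\ell(k-1)$, apply \cref{lem:Inheritance-mini-degree} to the property $\calP(\deltaLHC{k}{d},\eps/2,d,k)$, feed the resulting vertex-degree bound into \cref{lem:blow-up-tiling}, and then observe that each $s$-vertex member of the property contains a loose Hamilton cycle (hence its $m$-blow-up contains an $m$-blow-up of a loose $\ell$-cycle). The only cosmetic difference is that you make the choice $\mu=1/(2s)$ explicit, whereas the paper simply notes that $e^{-\sqrt{s}}<1/(2s)$ and invokes \cref{lem:blow-up-tiling} directly.
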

\begin{proof}
Given $k>d \ge 1$ and $\eps >0$, we choose  
\[
1/k, \eps \gg 1/\ell \gg c, \eta \gg 1/n,
\]
and let $s := \ell(k-1)$.
We can assume $\ell$ is large enough so that $e^{-\sqrt{s}} < 1/(2s)$.
Also, let $m=(\log n)^c$ and $\delta := \deltaLHC{k}{d}$.
Consider the property $s$-graph 
$P := P^{(s)}(H; \mathcal P(\delta, \eps/2, d, k))$.
By \cref{lem:Inheritance-mini-degree}, we have 
 \[\delta_1(P) > (1 - e^{- \sqrt{s}}) \binom{n-1}{s - 1}.\]
Furthermore, by \cref{lem:blow-up-tiling} and the choice of $s$ and $n$,
this gives us a collection of $s$-vertex $k$-graphs $F_1, \dots, F_{t} \in \mathcal P(\delta, \eps/2, d, k)$ such that there are pairwise vertex-disjoint $m$-blow-ups $F_i(m)$ of each $F_i$, which together cover all but $\eta n$ vertices of $H$. 

Note that each $s$-vertex $k$-graph $F_i\in \mathcal P(\delta, \eps/2, d, k)$ satisfies that $\delta_d(F_i) \geq ( \deltaLHC{k}{d} + \frac{\eps}{2})\binom{s-d}{k - d}$.
Since $s$ is sufficiently large and divisible by $k-1$, we deduce that $F_i$ has a loose Hamilton cycle for all $1\le i\le t$.
This implies that each $F_i(m)$ contains an $m$-blow-up of a loose cycle of length $\ell$.
\end{proof}

\subsection{Partitioning trees}
The following result \cite[Proposition 6.5]{Joos-Kim-Kuhn-Osthus-19} shows how to decompose a rooted tree into subtrees of controlled size.
Such a decomposition can be found by a simple greedy algorithm.

\begin{lemma}
\label{partition-tree}
Suppose integers $n, \Delta >1$ and $n \geq m' \geq 1$.
Then for any rooted tree $(T,x)$ on $n$ vertices with $\Delta_1(T) \leq \Delta$,
there exists a collection $\mathcal{T}$ of pairwise vertex-disjoint rooted subtrees such that
\stepcounter{propcounter}
\begin{enumerate}[label = {{\rm (\Alph{propcounter}\arabic{enumi})}}]
    \item $T_i \subseteq T(x_i)$ for every $(T_i, x_i) \in \mathcal{T}$,
    \item $m' \leq |V(T_i)| \leq 2 \Delta m'$ for every $(T_i, x_i) \in \mathcal{T}$, and
    \item $\bigcup_{(T_i, x_i) \in \mathcal{T}} V(T_i) = V(T)$.
\end{enumerate}
\end{lemma}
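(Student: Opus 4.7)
The plan is to construct $\mathcal{T}$ by a greedy post-order procedure on $(T,x)$, cutting off subtrees as soon as they become large enough and then absorbing any small residue into an existing piece at the end. Fix any traversal of $(T,x)$ in which each vertex is processed after all of its descendants. For each $v$, maintain at the moment $v$ is processed a counter $s(v)$ equal to the number of vertices of $T(v)$ not yet placed into any member of $\mathcal{T}$, so that $s(v) = 1 + \sum_{w \in D(v)} s(w)$, where $s(w)$ already reflects any cuts performed at or below $w$. If $s(v) \geq m'$, \emph{cut at $v$}: let $T_v$ be the subtree of $T(v)$ induced by $v$ together with all descendants of $v$ still uncovered at this moment, add $(T_v, v)$ to $\mathcal{T}$, and reset $s(v) := 0$; otherwise move on. Each child $w$ of $v$ satisfies $s(w) < m'$ by the time $v$ is processed (either we cut at $w$ and reset $s(w) = 0$, or we did not cut precisely because $s(w) < m'$), so $|D(v)| \leq \Delta$ gives
\[ m' \leq |V(T_v)| = s(v) \leq 1 + \Delta(m' - 1) \leq \Delta m'. \]
Each $T_v$ is a connected subtree of $T(v)$ (no uncovered descendant can sit below a previously cut ancestor, for such an ancestor's cut would have absorbed it), so (A1) holds for these pieces, and pairwise disjointness is immediate from the reset.

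After $x$ has been processed, the still-uncovered vertices form a (possibly empty) subtree $L \subseteq T$ containing $x$ with $|V(L)| < m'$. If $L = \emptyset$ we are done; otherwise, since $n \geq m'$ we must have cut at least once, so $\mathcal{T} \neq \emptyset$. Any edge of $T$ between $L$ and its complement has its non-$L$ endpoint equal to a root $x_i$ of some piece $(T_i, x_i) \in \mathcal{T}$: indeed, if the non-$L$ endpoint were a non-root vertex of some piece, then its parent (which is the $L$-endpoint of the edge) would lie in the same piece rather than in $L$, contradicting $L$'s disjointness from the pieces. Replace this $(T_i, x_i)$ in $\mathcal{T}$ by $(T_i \cup L, x)$, re-rooted at the original root $x$; the enlarged piece is a connected subtree of $T = T(x)$ via the edge from $x_i$ to its parent, so (A1) is preserved, while
\[ m' \leq |V(T_i \cup L)| \leq \Delta m' + (m' - 1) \leq (\Delta + 1)m' \leq 2\Delta m' \]
(using $\Delta \geq 2$) yields (A2). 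Property (A3) is then immediate.

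The only step requiring care is the absorption of the residue $L$: it can be nonempty but too small to stand alone under the lower bound of (A2), so it must be merged into an existing piece without violating either the rooting condition (A1) or the upper bound of (A2). Selecting the merge partner to be a cut piece whose root's parent lies in $L$, and then re-rooting the enlarged piece at $x$, resolves both issues simultaneously and is the only place where the flexibility $T_i \subseteq T(x_i)$ (as opposed to $T_i = T(x_i)$) in (A1) is genuinely used.
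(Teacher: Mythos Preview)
Your argument is correct and is precisely the ``simple greedy algorithm'' the paper alludes to; the paper does not supply its own proof but cites \cite{Joos-Kim-Kuhn-Osthus-19}, and your post-order cut-and-merge procedure is the standard realization of that algorithm. The only nontrivial point---absorbing the small residue $L$ at the root into an adjacent cut piece and re-rooting at $x$---is handled cleanly, and your justification that the non-$L$ endpoint of any $L$-boundary edge must be a cut root is sound.
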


\subsection{Connectivity and reservoir}
For our proofs, we will wish to join two vertices $x_1, x_2$ in the host $k$-graphs with a short loose path (i.e., the expansion of a graph path).
Given a $k$-graph $H$ and $x_1, x_2 \in V(H)$, say that a loose path $P' \subseteq H$ is an \emph{$(x_1, x_2)$-path} if there is a graph path $P$ with endpoints $x_1, x_2$ and $P' = P^{(k)}$.
The \emph{length} of a path is its number of edges.
Note that if $\delta_d(H) > 0$ and $d \geq 2$, then $x_1$ and $x_2$ are contained in a common edge, which is an $(x_1, x_2)$-path of length one.
If, instead, we only have lower bounds on $\delta_1(H)$, we need a longer loose path to connect $x_1$ and $x_2$.
The following lemma achieves this using a loose path of length two.
Note that, by \eqref{equation:lowerbound-deltaLHC}, we can apply this lemma in $k$-graphs whenever $\delta_1(H) \geq (\deltaLHC{k}{d}+ \eps)\binom{n-1}{k-1}$.

\begin{lemma} \label{lemma:shortpath}
    Let $k \geq 2$, and $1/k, \eps \gg \delta, 1/n$.
    Let $H$ be an $n$-vertex $k$-graph with $\delta_1(H) \geq (2^{-k+1} + \eps)\binom{n-1}{k-1}$.
    For every pair of distinct vertices $x_1, x_2 \in V(H)$, there are at least $\delta n^{2k-3}$ many $(x_1, x_2)$-paths in $H$ of length two.
\end{lemma}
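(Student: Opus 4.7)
The plan is to reduce the statement to a counting estimate in the link hypergraphs of $x_1$ and $x_2$. For $i \in \{1,2\}$, let $L_i := \{S \in \binom{V(H) \setminus \{x_1, x_2\}}{k-1} : \{x_i\} \cup S \in E(H)\}$, write $N := n-2$, and set $d_i(y) := |\{S \in L_i : y \in S\}|$. Each $(x_1, x_2)$-path of length two is uniquely determined by a triple $(y, S_1, S_2)$ with $S_i \in L_i$ and $S_1 \cap S_2 = \{y\}$, so the number of such paths equals $N_1 := |\{(S_1, S_2) \in L_1 \times L_2 : |S_1 \cap S_2| = 1\}|$. From the degree hypothesis together with $\deg_H(\{x_1, x_2\}) \leq \binom{n-2}{k-2}$, a routine estimate gives $|L_i| \geq (2^{-(k-1)} + \eps/2)\binom{N}{k-1}$ for all sufficiently large $n$.

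Writing $\Sigma := \sum_y d_1(y) d_2(y) = \sum_{j \geq 1} j N_j$ with $N_j := |\{(S_1, S_2) \in L_1 \times L_2 : |S_1 \cap S_2| = j\}|$, a direct counting bound (choose two shared vertices and extend each $S_i$ freely) yields $\sum_{j \geq 2} j N_j = O(n^{2k-4})$. Hence $N_1 = \Sigma - O(n^{2k-4})$, and it suffices to show $\Sigma \geq \delta' n^{2k-3}$ for some $\delta' = \delta'(\eps, k) > 0$.

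The main step is this lower bound on $\Sigma$, which exploits that $2^{-(k-1)}$ is precisely the limiting density of $\binom{A}{k-1}$ in $\binom{V \setminus \{x_1, x_2\}}{k-1}$ when $|A| = N/2$. Fix $\eta := \eps/(4(k-1))$ and $T_i := \{y : d_i(y) \geq \eta \binom{N-1}{k-2}\}$. The number of $S \in L_i$ with $S \not\subseteq T_i$ is at most $\sum_{y \notin T_i} d_i(y) \leq N \eta \binom{N-1}{k-2} = (k-1)\eta \binom{N}{k-1}$, so
\[
\binom{|T_i|}{k-1} \;\geq\; \bigl(2^{-(k-1)} + \eps/4\bigr)\binom{N}{k-1}.
\]
Using $\binom{|T_i|}{k-1}/\binom{N}{k-1} \leq (|T_i|/N)^{k-1}$ and taking $(k-1)$-th roots gives $|T_i|/N \geq (2^{-(k-1)} + \eps/4)^{1/(k-1)} =: 1/2 + \eps''$ for some $\eps'' = \eps''(\eps, k) > 0$. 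Inclusion--exclusion then yields $|T_1 \cap T_2| \geq 2\eps'' N$, and each $y \in T_1 \cap T_2$ contributes at least $\eta^2 \binom{N-1}{k-2}^2 = \Omega(n^{2k-4})$ to $\Sigma$, summing to $\Sigma = \Omega(n^{2k-3})$ as required.

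The main obstacle to be aware of is that one cannot control $\Sigma$ from the degree sums $\sum_y d_i(y)$ alone (say, via Cauchy--Schwarz), because at density exactly $2^{-(k-1)}$ the extremal configurations take $L_i \subseteq \binom{A_i}{k-1}$ with $A_1, A_2$ disjoint of size $N/2$, giving $\Sigma = 0$. The strict excess $\eps$ above the threshold is precisely what forces $|A_i| > N/2$ and converts this into the overlap $|T_1 \cap T_2| = \Omega(\eps n)$ driving the argument above.
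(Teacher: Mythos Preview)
Your proof is correct and follows the same key idea as the paper's sketch: both exploit that a $(k-1)$-uniform link hypergraph of density strictly above $2^{-(k-1)}$ must be supported on more than half the vertices, forcing the supports of the links of $x_1$ and $x_2$ to overlap. The paper phrases this as ``by Kruskal--Katona the link of each $x_i$ spans more than $n/2$ vertices'' and then appeals to a separate supersaturation step to pass from one path to many. You instead make the whole thing quantitative in one pass: you replace the support by the high-degree set $T_i$, use only the elementary bound $|\{S \in L_i : S \subseteq T_i\}| \le \binom{|T_i|}{k-1}$ (so Kruskal--Katona is not actually needed), deduce $|T_1 \cap T_2| = \Omega_\eps(n)$, and then count via $N_1 = \sum_y d_1(y)d_2(y) - \sum_{j \ge 2} j N_j$ together with the trivial estimate $\sum_{j \ge 2} j N_j = O(n^{2k-4})$. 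Your route is slightly longer to write out but is entirely self-contained and avoids invoking either Kruskal--Katona or an external supersaturation lemma; the paper's route is terser but leans on those black boxes. The closing remark about the extremal configuration $L_i \subseteq \binom{A_i}{k-1}$ with $|A_i| = N/2$ is a nice explanation of why $2^{-(k-1)}$ is the correct threshold and why naive averaging fails.
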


To prove this, it suffices to show the existence of edges $e, f \in E(H)$ with $x_1 \in e \setminus f$, $x_2 \in f \setminus e$, and $e \cap f \neq \emptyset$ (we can then conclude by supersaturation, \cref{supersa}).
To do so, note that by the Kruskal-Katona theorem the link graph of $x_1, x_2$ both must span more than $n/2$ vertices, from which we can easily find the required edges.
See~\cite[Proposition 9.4]{Alvarado-Kohayakawa-Lang-Mota-Stagni-2023} for a similar argument.

When connecting two subtrees into a large one, we want to ensure that the vertices used for the connection all come from a small set, called \emph{reservoir}, that is disjoint to the subtrees.
Such a reservoir has appeared in various forms in the literature (e.g.,~\cite{Bub-Han-Schacht-13, Daniela2010Hamilton, Pavez-Matias-Nicolas-Stein-24, Pehova-Petrova-24}) and can be proved via a standard probabilistic analysis.
In our case, this needs to be combined with~\Cref{lemma:shortpath}.
We omit the details and refer the reader to~\cite[Lemma 6]{Bub-Han-Schacht-13} and~\cite[Lemma 8.1]{Daniela2010Hamilton} for similar proofs.

\begin{lemma}\label{res-lem}  
Suppose that $k \geq 2$, and $1/k,\eps\gg \delta \gg 1/n$.
Let $H$ be a $k$-graph on $n$ vertices satisfying 
\[
\delta_d(H)\ge \begin{cases} 
(\frac{1}{2^{k-1}}+\eps) \binom{n-d}{k-d} & \text{for } d=1, \\
\eps \binom{n-d}{k-d} & \text{for }  2\le d\le k-1.
\end{cases}
\]
Also, let $\ell_1 = 2$ and $\ell_d = 1$ for all $2\le d\le k-1$.
Then there exists $R\subseteq V(H)$ of size $\delta n$ such that for every pair of vertices $x_1, x_2 \in V(H)$, there are at least $\delta |V(R)|^{\ell_d(k-1)-1}$
many $(x_1, x_2)$-paths $P$ of length $\ell_d$ with $V(P) \setminus \{x_1, x_2\} \subseteq R$.
\end{lemma}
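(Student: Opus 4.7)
The plan is to build $R$ by a standard random-selection-plus-concentration argument. I would include each vertex of $V(H)$ independently with probability $p := 2\delta$ to form a random set $R^{\ast}$, then argue that with positive probability $R^{\ast}$ has size close to $pn$ and, for every pair $\{x_1,x_2\}\subseteq V(H)$, contains the internal vertices of many $(x_1,x_2)$-paths of length $\ell_d$. The final reservoir $R$ is then obtained by discarding excess vertices from $R^{\ast}$ until $|R|=\delta n$.

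The first ingredient is showing that $H$ itself already contains many $(x_1,x_2)$-paths of length $\ell_d$ for every pair. When $d=1$ (so $\ell_1=2$) this is exactly \Cref{lemma:shortpath}. When $d\geq 2$ (so $\ell_d=1$), a path is a single edge through $\{x_1,x_2\}$, and by \Cref{degree} the hypothesis implies $\delta_2(H)\geq \eps\binom{n-2}{k-2}$, yielding at least $\Omega(n^{k-2})$ such edges. Since a path of length $\ell_d$ has exactly $\ell_d(k-1)-1$ internal vertices, in both cases the number of $(x_1,x_2)$-paths of length $\ell_d$ in $H$ is at least $c\, n^{\ell_d(k-1)-1}$ for some constant $c=c(k,\eps)$ with $c\gg\delta$.

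Next, I would estimate the random count. A fixed path of length $\ell_d$ has all of its internal vertices contained in $R^{\ast}$ with probability exactly $p^{\ell_d(k-1)-1}$, so the expected number of good paths between any fixed pair $\{x_1,x_2\}$ is at least $c(pn)^{\ell_d(k-1)-1}$, and $\mathbb{E}|R^{\ast}|=pn$. For concentration I would apply McDiarmid's bounded-differences inequality to the good-path count for each fixed pair: toggling whether a single vertex lies in $R^{\ast}$ alters the count by at most $O(n^{\ell_d(k-1)-2})$, so a deviation of half the mean occurs with probability at most $\exp(-\Omega(\delta^{O(1)}n))$; a Chernoff bound does the same for $|R^{\ast}|$. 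Since $\delta\gg 1/n$, both tail bounds are far smaller than $n^{-2}$, so a union bound over the $\binom{n}{2}$ pairs yields, with positive probability, $|R^{\ast}|\in[\delta n,3\delta n]$ together with at least $(c/2)(pn)^{\ell_d(k-1)-1}$ good paths per pair.

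To finish, I would trim $R^{\ast}$ down to $|R|=\delta n$ by removing at most $2\delta n$ arbitrary vertices. Each removal destroys at most $O(n^{\ell_d(k-1)-2})$ good paths for a fixed pair, so the total loss per pair is $O(\delta\, n^{\ell_d(k-1)-1})$, which is dominated by the main term because $c\gg\delta$. Thus the final count per pair is at least $\delta(\delta n)^{\ell_d(k-1)-1}=\delta |R|^{\ell_d(k-1)-1}$, as required. The only nontrivial step is the concentration estimate, and it goes through easily because the per-vertex Lipschitz constant is a factor of $\Theta(n)$ smaller than the mean; the hierarchy $\delta\gg 1/n$ then provides more than enough slack for the union bound over all pairs.
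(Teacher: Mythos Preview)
Your approach is exactly what the paper has in mind: the paper omits the proof and simply says the result follows from ``a standard probabilistic analysis'' combined with \Cref{lemma:shortpath}, referring to analogous reservoir lemmas in the literature. So the strategy of (i) counting $(x_1,x_2)$-paths in $H$ via \Cref{lemma:shortpath} (for $d=1$) or the $2$-degree condition (for $d\ge 2$), (ii) sampling $R^\ast$ at random, (iii) concentrating the good-path count with McDiarmid and $|R^\ast|$ with Chernoff, and (iv) trimming to size $\delta n$, is precisely the intended one.

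There is, however, a quantitative slip in your trimming step. With $p=2\delta$ you must discard roughly $\delta n$ vertices. Using your bound of $O(n^{m-1})$ (with $m=\ell_d(k-1)-1$) on the paths destroyed per removal, the total loss is $O(\delta\, n^{m})$, while your surviving main term is only $(c/2)(2\delta n)^{m}=\Theta(c\,\delta^{m}n^{m})$. The comparison you need is $c\,\delta^{m}\gg\delta$, i.e.\ $c\,\delta^{m-1}\gg 1$, which is false since both $c$ and $\delta$ are small. The standard fix is to take $p=\delta$ rather than $2\delta$: Chernoff then gives $|R^\ast|=(1\pm\gamma)\delta n$ for any fixed $\gamma>0$, so you add or delete at most $\gamma\delta n$ vertices, and choosing $\gamma\ll c\,\delta^{m-1}$ makes the trimming loss negligible compared with the main term $(c/2)(\delta n)^{m}$. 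After this adjustment the final count per pair is at least $(c/4)(\delta n)^{m}\ge \delta\,|R|^{m}$, using $c\gg\delta$.
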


\section{Embedding almost spanning expansion hypertrees} \label{section:almost}
In this section, we prove the existence of any $(1-\alpha)n$-vertex $k$-expansion hypertree of bounded-degree in an $n$-vertex $k$-graph above the loose Hamilton cycle threshold.
This will be used in the proof of our two main theorems.

\begin{theorem}\label{almost-cover}
    Suppose $k > d \geq 1$, $\Delta \in \mathbb N$ and $ \eps, \alpha >0$, there exists an $n_0\in \mathbb N$ such that the following holds. Let $H$ be a $k$-graph on $n\geq n_0$ vertices with
    \[
    \delta_d(H) \geq \left(\deltaLHC{k}{d} + \eps \right) \binom{n-d}{k-d}
    \]
    and let $w \in V(H)$. Let $T^{(k)}$ be the $k$-expansion of any tree $T$ with $\Delta_1(T) \leq \Delta$ and $|V(T^{(k)})| \leq (1 - \alpha)n$, and let $v \in V(T)$. 
    Then there is an embedding $\phi: T^{(k)}\to H$ with $\phi (v) = w$.
\end{theorem}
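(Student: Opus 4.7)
The plan is to follow the blow-up cover plus reservoir strategy of \cref{ssection:sketch-main}, embedding the subtree of $T^{(k)}$ containing $v$ greedily first so as to enforce $\phi(v) = w$ by construction. Fix a hierarchy $1/n \ll \delta \ll \eta, c \ll 1/\ell \ll \min\{\eps, \alpha, 1/\Delta, 1/k\}$, and set $M := (\log n)^c$ and $m' := \alpha M/(10 k \Delta)$.

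First I would construct a reservoir by applying a standard variant of \cref{res-lem} that forbids $w$ (using \cref{degree} to translate into the $1$-degree hypothesis when $d = 1$, which is valid since \eqref{equation:lowerbound-deltaLHC} yields $\deltaLHC{k}{1} \geq 2^{-k+1}$): this produces a set $R \subseteq V(H)\setminus\{w\}$ of size $\delta n$ such that every pair $x_1, x_2 \in V(H)$ admits many $(x_1, x_2)$-paths of length $\ell_d$ through $R$ (with $\ell_1 = 2$ and $\ell_d = 1$ for $d \geq 2$). I would then root $T$ at $v$ and apply \cref{partition-tree} with parameter $m'$, producing subtrees $(T_1, x_1), \dotsc, (T_s, x_s)$ with $m' \leq |V(T_i)| \leq 2\Delta m'$ and $x_1 = v$.

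Next I would embed $T_1^{(k)}$ greedily into $V(H)\setminus R$, setting $\phi(v) := w$ and extending one hyperedge at a time in a loose-tree order that starts at $v$. Each extension step succeeds by a direct degree count, since $|V(T_1^{(k)})| + |R| = O(M) + \delta n$ is much smaller than $\deg_H(u)$ for any previously embedded anchor $u$. Writing $W := \phi(V(T_1^{(k)}))$, we have $|R| + |W| \ll \eps n$, so $H'' := H - R - W$ still satisfies $\delta_d(H'') \geq (\deltaLHC{k}{d} + \eps/2)\binom{|V(H'')|-d}{k-d}$, and \cref{thm:blow-up-cycle-tiling} supplies pairwise vertex-disjoint $M$-blow-ups $F_1(M), \dotsc, F_t(M)$ of loose $\ell$-cycles covering all but $\eta n$ vertices of $H''$. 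The remaining subtrees $T_2^{(k)}, \dotsc, T_s^{(k)}$ are then embedded in partition order: for each $i \geq 2$, the parent $v_i^*$ of $x_i$ in $T$ has already been embedded, so I pick a cluster $X$ in some $F_{j'}(M)$ with spare capacity, invoke \cref{res-lem} to obtain a hyperedge of $H$ containing $\phi(v_i^*)$, $k-2$ vertices from $R$, and a target $w_i \in X$, and then apply \cref{lem:cycle-is-covered-by-tree} to embed the remainder of $T_i^{(k)}$ into $F_{j'}(M)$ with $\phi(x_i) = w_i$. The balancing property of \cref{lem:cycle-is-covered-by-tree} keeps cluster loads uniform, and one routinely checks that $\sum_i|V(T_i^{(k)})| \leq (1-\alpha)n$ fits inside the $(1 - \eta - \delta - o(1))n$ covered vertices, while the reservoir is consulted only $s - 1 = O(n/M) \ll |R|$ times.

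The main obstacle is the case $d = 1$, in which \cref{res-lem} supplies only length-$2$ connecting paths through $R$, whereas the edge $e_i' \in E(T^{(k)})$ joining $T_i^{(k)}$ to its parent subtree must be mapped to a single hyperedge of $H$. I would resolve this by modifying \cref{partition-tree} slightly so that each new subtree $T_i$ ($i \geq 2$) is separated from its parent subtree by a single ``bridge'' vertex of $T$ which is embedded into $R$; the two $T$-edges incident to this bridge then expand into two hyperedges of $T^{(k)}$ matching the length-$2$ reservoir path. This requires only pruning one extra vertex each time the greedy algorithm behind \cref{partition-tree} delineates a new subtree. With this tweak in place, the remainder of the argument is uniform across all $d \geq 1$.
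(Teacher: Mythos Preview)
Your overall strategy---reservoir via \cref{res-lem}, tree partition via \cref{partition-tree}, blow-up covers via \cref{thm:blow-up-cycle-tiling}, embedding subtrees into slices of the blow-ups via \cref{lem:cycle-is-covered-by-tree}, and connecting consecutive subtrees through the reservoir---is essentially the paper's approach, and for $d \geq 2$ your connection step works as you describe (the paper postpones all connections to the end and your order differs, but this is immaterial).

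The genuine gap is your $d=1$ fix. The ``bridge vertex'' $b$ you propose to prune is a vertex of $T$ and in general has degree up to $\Delta$, not~$2$; if $b$ has $t-1 \geq 2$ children inside its subtree then there are $t$ hyperedges of $T^{(k)}$ incident to $b$, and a single length-$2$ reservoir path can realise only two of them. ``Pruning one extra vertex'' does not help, since removing such a $b$ disconnects the subtree into several pieces, each of which again needs to be attached at $b$. What is actually required is to embed, in one shot, the full $k$-expansion of the star centred at the subtree root $x_i$ (with leaves the already-embedded parent $z_i$ and all children $x_{i,1},\dotsc,x_{i,t_i}$), placing $x_i$ and all expansion vertices in $R$ while the $t_i$ children land in a fixed cluster $X_i$ of the target blow-up. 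The paper does this via supersaturation: it defines an auxiliary $(2k-2)$-graph $F$ on $R \cup X_i$ whose edges are the length-$2$ loose paths from $\phi(z_i)$ into $X_i$ with interior in $R$, notes that $F$ is dense by \cref{res-lem}, and invokes \cref{supersa} to extract a $\Delta$-blow-up of a single $F$-edge. This blow-up contains the $k$-expansion of the required $(t_i+1)$-star, accommodating all children of $x_i$ simultaneously; the subsubtrees $T^{(k)}(x_{i,j})$ are then embedded into the blow-up cycle with their roots already placed in $X_i$.
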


\begin{proof}
\noindent \emph{Step 1: Embedding the root and very small hypertrees.}
Let $n' = |V(T)|$, and 
for the given $v \in V(T)$ we let $N_{T} (v) := \{v_1, v_2, \ldots, v_{r_0}\}$.
For each $i \in [r_0]$, we denote the edge between $v$ and $v_{i}$ by $e_i$. Note that $r_0 \leq \Delta$ and $|V(T^{(k)})| = (k-1) n' - k + 2 \leq (1-\alpha)n$.
We need to find an embedding $\phi$ of $T^{(k)}$ in $H$ such that $\phi (v) = w$.
For the remainder of the proof, we choose $\delta, \ell, c, \eps'$ so that
\begin{equation}\label{equa-1}
1/k, 1/\Delta, \eps, \alpha\gg  \delta, 1/\ell\gg c,\eta\gg \eps'\gg 1/n,  
\end{equation}
and furthermore $\ell$ is an odd integer.

We first choose a reservoir.
By \cref{res-lem}, there exists a subset $R \subseteq V(H) \setminus \{w\}$ of size $\delta n$ such that for every pair $x, y \in V(H)$, we have (i) $\deg_H (\{x, y\}, R) \ge \delta (\delta n)^{k-2} / 4$ when $d \geq 2$; (ii) there are at least $\delta (\delta n)^{2k - 3} / 4$ many $(x, y)$-paths $P$ of length 2 with $V(P) \setminus \{x, y\} \subseteq R$ when $d = 1$.
Let $H_0 := H - R$ and $|V(H_0)|:=n_0$. The choice of $\eps\gg \delta$ ensures that $H_0$ satisfies
\[\delta_d(H_0) \geq \left(\deltaLHC{k}{d} + \frac{\eps}{2} \right)\binom{n_0 - d}{k - d}.\] 

Let $m := (\log n_0)^c$.
Consider the rooted tree $(T,v)$ and let
\[
T_0 :=T - \{V(T(v_i)): i \in [r_0] ~~ \text{and} ~~ |V(T(v_i))| \geq \eps' m \}.
\]
Namely, $T_0$ contains $v$ and some small subtrees $T(v_i)$ with $|V(T(v_i))| < \eps' m$.
Clearly, we have $1 \leq |V(T_0)| \leq \Delta \eps' m$.
Thus we can embed $v$ into vertex $w$ and embed $T_0^{(k)}$ into $H_0$ greedily, by adding one edge at a time. Indeed, at every step, the minimum $d$-degree of the remaining subgraph of $H_0$ is at least 
\[
\left( \deltaLHC{k}{d} + \frac{\eps}{2} \right) \binom{n_0 - d}{k - d} - \Delta (k - 1) \eps' m \binom{n_0 - d}{k - d - 1} > \left( \deltaLHC{k}{d} + \frac{\eps}{4} \right) \binom{n_0 - d}{k - d},
\]
which implies that the next edge in the embedding can be chosen greedily.
We denote by $\phi_0 : T_0^{(k)} \to H_0$ the embedding obtained in this way. \medskip

\noindent \emph{Step 2: Partitioning into small subtrees and allocation.}
Let $T_b := T - V(T_0)$.
Note that $T_b$ is a forest, and it contains at most $\Delta$ subtrees, each of which must have more than $\eps' m$ vertices. 
Rename the subtrees by $T_1, T_2, \ldots, T_{r_0'}$, where $1 \leq r_0' \leq r_0 \leq \Delta$. 
Let $H_1 := H_0 - V(\phi_0 (T_0^{(k)}))$ and $|V(H_1)|:=n_1$. The choice of $\eps\gg \delta\gg \eps'$ ensures that $H_1$ satisfies
\[\delta_d(H_1) \geq \left(\deltaLHC{k}{d} + \frac{\eps}{4} \right)\binom{n_1 - d}{k - d}.\] 

Apply~\cref{thm:blow-up-cycle-tiling} to $H_1$ and $m$, to obtain that all but at most $\eta n_1$ vertices of $H_1$ can be covered with pairwise vertex-disjoint $m$-blow-ups of loose cycles of length $\ell$.
We denote those blow-ups by $C_1(m),C_2(m), \dots, C_r(m)$.
Note that we have $r \leq \frac{n_1} {(k- 1) \ell m}\le \frac{(1 - \delta) n} {(k- 1) \ell m}$.

Now we decompose the trees in $T_b$ into small-sized trees.
Recall that $|V(T_i)| \geq \eps' m$ holds for each $i \in [r_0']$.
We decompose $T_b$ as follows.
For each $i \in [r_0']$, if $|V(T_i)| > 2 \Delta \eps' m$, then we invoke \cref{partition-tree} with parameters $\Delta$ and $\eps'm$, to obtain a decomposition of $T_i$.
Otherwise, we simply incorporate the whole tree $T_i$ to the decomposition.
Let $r'$ be the total number of subtrees in this decomposition.
We have found a collection 
$\{(T_{b,i}, x_i)\}_{1 \le i \le r'}$ of pairwise vertex-disjoint subtrees such that
\stepcounter{propcounter}
\begin{enumerate}[label = {{\rm (\Alph{propcounter}\arabic{enumi})}}]
     \item $T_{b,i} \subseteq T(x_i)$ for each $i \in [r']$;
     \item $\eps' m \leq |V(T_{b,i})| \leq 2 \Delta \eps' m$ for each $i \in [r']$;
     \item $\bigcup_{i \in [r']} V(T_{b,i}) = V(T_b)$.
\end{enumerate} 
Looking at the $k$-expansion of those trees, there is a corresponding collection of $k$-expansion subtrees $\{ T^{(k)}_{b,i} \}_{1\le i\leq r'}$  
such that
\stepcounter{propcounter}
\begin{enumerate}[label = {{\rm (\Alph{propcounter}\arabic{enumi})}}]
    \item $V(T_{b,i}^{(k)}) \cap V(T_{b,i'}^{(k)}) = \emptyset$ for distinct $i, i' \in [r']$; 
    \item $(k-1) \eps' m - k
    \leq |V(T_{b,i}^{(k)})| \leq 2 \Delta (k - 1) \eps' m$ for each $i \in [r']$.
\end{enumerate}
Note that all the roots $x_i$ are anchor vertices in the $k$-expansion $T_b^{(k)}$.
In addition, we have $r'< \frac{|V(T^{(k)})|} {(k - 1) \eps' m - k} \leq \frac{(1 - \alpha)n} {k \eps' m }$.

Now, we allocate the vertices in
$\{(T^{(k)}_{b,i}, x_i)\}_{1 \le i \le r'}$ to $\{C_j(m)\}_{1 \le j \le r}$. For each $i \in [r']$,
let 
\[
    q_i:=\frac{|V(T_{b,i})|} {\ell} + \frac{|V(T_{b,i})|} {(\log |V(T_{b,i})|)^2},
\]
and $q:=\max\{q_i: i \in [r']\}$. Clearly, 
\[
    q\le \frac{2\Delta\eps'm}{\ell}+\frac{2\Delta\eps'm}{(\log(\eps'm))^2}.
\]  
We slice each $C_j (m)$ for $j \in [r]$ into certain sub-hypergraphs consisting of $C(q_i)$ for $i \in [r']$ and some $q'$-blow-up of loose cycle, where $q'\le q$. We will allocate the vertices in subtree $T_{b,i}^{(k)}$ to $C(q_i)$.
Since
\[
    \begin{split}
    & (k - 1) \ell \sum_{i=1}^{r'} \left( \frac{|V(T_{b,i})|}{\ell} + \frac{|V(T_{b,i})|}{(\log |V(T_{b,i})|)^2}\right) +  (k - 1) \ell  rq \\
    & \leq  (k - 1) |V(T_{b})| + \frac{(k - 1) \ell |V(T_b)|}{(\log (\eps' m))^2} +  \frac{(1-\delta) n}{m} \left( \frac{2 \Delta \eps' m}{\ell} + \frac{2 \Delta \eps' m}{(\log(\eps'm))^2} \right) \\
    & \leq (1 - \alpha) n + \frac{2 \Delta \eps' (1 - \delta) n}{\ell} + o(n) \\
    & < (1 - \delta)(1 - \eta) n,
    \end{split}
\]
where the last inequality follows from $\alpha\gg \delta\gg \eta\gg \eps'$, we can finish the allocation of the vertices in $\{T^{(k)}_{b,i}\}_{1\le i\le r'}$. 
 \medskip

\noindent \emph{Step 3: Connecting via the reservoir and finishing embedding.}
In the following, we establish the connection using vertices in $R$ and embed the subtree to $T^{(k)}_{b,i}$ to $C(q_i)$ to finish the embedding.

First assume $d=1$, we essentially use stars to establish connections. 
Without loss of generality, we may assume that the subtrees in $\{(T^{(k)}_{b,i}, x_i)\}_{1 \le i \le r'}$ are arranged in a valid ordering, which means that all anchor vertices in subtrees follow a valid ordering in the rooted tree $(T, v)$. 
In what follows, we embed the subtrees $T^{(k)}_{b, i}$ one by one and finish the connections simultaneously. 
We will consecutively define subtrees $T^{(k)}_0=Q_0\subset Q_1  \subset \dotsb \subset Q_{r'}=T^{(k)}$ such that $Q_i$ is the union of $Q_{i-1}$, $T^{(k)}_{b,i}$ and the edges connecting $Q_{i-1}$ and $T^{(k)}_{b,i}$ for each $i\in [r']$;
define embeddings $\phi_0, \dots, \phi_{r'}$ such that $\phi_i: Q_i\to H$ is an embedding of $Q_i$ in $H$, and define subsets $R=R_1\supset R_2\supset \dots \supset R_{r'}$ with $|R_{i}|\geq |R_{i-1}| - k\Delta$. We will use the vertices in $R_{i}$ to connect $Q_{i-1}$ and $T^{(k)}_{b,i}$.

Recalling that $\phi_0: Q_0\to H$ has been defined in Step 1. Now, suppose $i\in [r']$ is given and that the embedding $\phi_{i-1}$ of $Q_{i-1}$ has been defined.
We now define $\phi_i$ by extending $\phi_{i-1}$ as follows. Since the ordering is valid, the father of the root $x_{i}$ (denoted by $z_i$) has been embedded, and assume $\phi_{i-1}(z_i)=z'_i$. Let $N_{T_{b, i}} (x_{i}) := \{x_{i, 1}, x_{i, 2}, \ldots, x_{i, t_i}\}$ for some $t_i \leq \Delta$ and $S_{x_i}$ be the $(t_i+1)$-star induced on set $\{z_i, x_i, x_{i, 1}, x_{i, 2}, \ldots, x_{i, t_i}\}$ in $T$ centered at $x_i$.  
Note that 
\[
|R_{i}| \geq |R|- k\Delta(i-1) > \delta n/2,
\]
where the inequality follows from $i < r' \leq \frac{(1 - \alpha)n} {k \eps' m }$, $m = (\log n_1)^c$ and $\alpha \gg \delta\gg \eps'$.
Let $X_{i}$ be a cluster of $C(q_{i})$. 
We define an auxiliary $(2k-2)$-graph $F$ on the set $R_{i} \cup X_{i}$ with edge set $E(F) = \{e \subseteq R_{i} \cup X_{i}: H[e \cup \{z_i'\}] \text{ contains an } (z_i', w')\text{-path } P \text{ of length 2 with } V(P) \setminus \{z_i', w'\} \subseteq R_{i} \text{ and } w' \in X_{i}\}$. 
By \cref{res-lem}, we have $|E(F)| \geq \delta |X_i| |R_{i}|^{2k - 3}/2$. 
By an averaging argument, there exists a subset $R' \subseteq R_{i}$ with $|R'| = |X_i|$ such that $|E (F[R' \cup X_i])| \geq \xi |X_i|^{2k - 2}$, where $\xi$ is a constant satisfying $\xi \ll \delta$. 
Indeed, suppose otherwise, then we have 
\[
|E(F)| < \frac{\binom{|R_{i}|}{|X_i|} \times \xi |X_i|^{2k - 2}}{\binom{|R_{i}|}{|X_i|-(2k - 3)}} \leq \frac{\xi |X_i|^{2k - 2} |R_{i}|^{2k - 3}}{|X_i| (|X_i| - 1) \dotsb (|X_i| - 2k + 4)} < \frac{\delta |X_i| |R_{i}|^{2k - 3}}{2},
\] 
a contradiction.
Thus, by \cref{supersa}, the $(2k - 2)$-graph $F[R' \cup X_i]$ contains a $\Delta$-blow-up  $K_{2k - 2}^{(2k - 2)}(\Delta)$ of an edge in $F$. 
We denote the vertex clusters of $K_{2k - 2}^{(2k - 2)}(\Delta)$ by $Y_1, \ldots, Y_{2k - 2}$, where $|Y_j| = \Delta$ for each $j\in [2k-2]$ and $Y_{2k - 2} \subseteq X_i$.
Then $H[\cup_{j=1}^{2k-2}Y_j]$ contains a copy  $S'^{(k)}_{x_i}$ of the $k$-expansion  $S^{(k)}_{x_i}$ of $S_{x_i}$ such that $z'_i\in V(S'^{(k)}_{x_i})$. Thus, we can extend the embedding $\phi_{i-1}$ to $\phi_{i}$ such that  $\phi_i(V(S^{k}_{x_i})\setminus N_{T_{b, i}} (x_{i}))\subset R_i\cup \{z'_i\}$ and $\phi_i(N_{T_{b, i}} (x_{i}))\subset X_i$.
By \cref{lem:cycle-is-covered-by-tree}, we can greedily embed the subtrees $T_{b, i}^{(k)}(x_{i, i'})$ to $C(q_i)$ ensuring that $x_{i, i'}$ is embedded into $X_i$ for $i' \in [t_i]$. By removing the used vertices in $R_i$, we obtain $R_{i+1}$ and clearly $|R_{i+1}| \geq |R_{i}| - k\Delta$.

Finally, assume that $2 \leq d \leq k-1$.
By \cref{lem:cycle-is-covered-by-tree}, for each $T_{b,i}$ with $i \in [r']$ and a $q_i$-blow-up $C(q_i)$ of loose cycle of length $\ell$, 
there is an embedding $\psi_i$: $T^{(k)}_{b,i} \to C(q_i)$. 
Note that $T_0^{(k)}$ and $T_b^{(k)}$ are connected by at most $\Delta$ many $k$-edges.
For clarity, we demonstrate that the number of vertices available for connection is sufficient in this case. The number of vertices needed for connection is at most 
\[
  (k - 2) (r' + \Delta) \leq (k - 2) \times \left(\frac{(1-\alpha)n}{(k-1)\eps'm-k} + \Delta\right) \leq O \left( \frac{n} {(\log n)^c} \right). 
\] 
Note that in this case for any two vertices $x, y \in V(H)$, we have 
\[
   \deg_{H}(\{x, y\}, R) \geq \frac{\delta (\delta n)^{k-2}} {2} > (k - 2) (r' + \Delta) (\delta n)^{k - 3}.
\]
Thus, we can choose distinct vertices in $R$ to connect these $k$-expansion subtrees.
Therefore, we get an embedding of $T^{(k)}$ in $H$ such that $v$ is embedded to $w$. 
\end{proof}

\section{Embedding spanning expansion hypertrees} \label{section:proof-main}
Now we shall give the proof of \cref{theorem:main}.
We first recall some definitions.
Let $H$ be a $k$-graph.
A \emph{diamond} in $H$ is a subgraph formed by two edges that intersect in $k-1$ vertices.
Given $w, v \in V(H)$, a diamond $D$ is a \emph{$(w, v)$-diamond} if $w, v$ are the two vertices of degree one in $D$.
A \emph{half-diamond} refers to either of the two edges in $D$ containing $w$ or $v$. We denote the half-diamond containing $v$ as $f^v$ and the one containing $w$ as $f^w$.

\begin{definition}
Let $H$ be a $k$-graph and $(w, w_1,\ldots, w_{k-1})$ be an ordered $k$-set of $V(H)$. 
An {\it absorbing tuple} for $(w, w_1,\ldots, w_{k-1})$ is a tuple $(f^v_1, \dotsc, f^v_{k-1})$, where $f^v_i$ is a half-diamond in $(w_i, v_i)$-diamond containing $v_i$ for each $i \in [k-1]$.
Moreover, those halves are vertex-disjoint; and in addition, we have $w v_1 \ldots v_{k-1} \in E(H)$.

Let $A(w, w_1, \ldots, w_{k-1})$ denote the set of all absorbing tuples for $(w, w_1,\ldots, w_{k-1})$ and let $A(H)$ denote the union of $A(w, w_1,\ldots, w_{k-1})$ for all $k$-sets $(w, w_1,\ldots, w_{k-1})$ of distinct vertices of $V(H)$.
\end{definition}

\begin{definition}
Let $T$ be a tree, $H$ be a $k$-graph, and $\phi$ be an embedding of $T^{(k)}$ in $H$. Let $f^v$ be the half-diamond in $(w, v)$-diamond.
Say that $f^v$ is \emph{immersed in $\phi(T^{(k)})$} if there exists an edge $xy \in E(T)$ such that its $k$-expansion $e \in E(T^{(k)})$ is mapped to $f^v$ via $\phi$, and $v \notin \{ \phi(x), \phi(y)\}$.
We say that an absorbing tuple $(f^v_{1}, \ldots, f^v_{k-1})$ is \emph{immersed in $\phi(T^{(k)})$} if all $f^v_{1}, \ldots, f^v_{k-1}$ are immersed in $\phi(T^{(k)})$.
\end{definition}

We first show that there exists a small set of vertex-disjoint absorbing tuples that contains many absorbing tuples for any $k$-set of $V(H)$.

\begin{lemma}\label{lem:counting-absorb}
Suppose $1/\Delta,1/k, \eps\gg \beta\gg \rho\gg 1/n$.
Let $H$ be a $k$-graph on $n$ vertices with
\[\delta_d(H) \geq \left({1}/{2}  + \eps \right) \binom{n-d}{k-d}.\]
Then there exists a set $\mathcal{A}$ of at most $\beta n$ pairwise vertex-disjoint absorbing tuples such that for every $(w, w_1,\ldots, w_{k-1})$ of $V(H)$, we have 
\[
|A(w, w_1, \ldots, w_{k-1}) \cap \mathcal{A}| \geq \rho n.
\]
\end{lemma}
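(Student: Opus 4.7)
The plan is a standard supersaturation plus random-sampling and cleanup argument. I would first prove a supersaturation bound: for every distinct $k$-tuple $(w, w_1, \ldots, w_{k-1})$ of vertices of $H$, $|A(w, w_1, \ldots, w_{k-1})| \geq c\, n^{k(k-1)}$ for some constant $c = c(k, \eps) > 0$. Then I would form $\mathcal{F} \subseteq A(H)$ by independent sampling with an appropriate coin-flip probability and delete conflicts to obtain $\mathcal{A}$.

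For the counting step, \cref{degree} allows us to replace the $d$-degree hypothesis with $\delta_1(H) \geq (1/2 + \eps)\binom{n-1}{k-1}$. An absorbing tuple for $(w, w_1, \ldots, w_{k-1})$ is specified by: (a) an ordered tuple $(v_1, \ldots, v_{k-1})$ with $wv_1\ldots v_{k-1} \in E(H)$; and (b) for each $i$, a $(w_i, v_i)$-diamond whose half $f^v_i$ is recorded. Since $\deg_H(w) \geq (1/2 + \eps)\binom{n-1}{k-1}$, stage (a) yields at least $(k-1)!\deg_H(w) = \Omega(n^{k-1})$ ordered tuples, and losing the $O(n^{k-2})$ tuples meeting $\{w_1, \ldots, w_{k-1}\}$ is negligible. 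For stage (b) at each fixed $i$, a $(w_i, v_i)$-diamond is specified by a $(k-1)$-set $S$ disjoint from $\{w_i, v_i\}$ such that both $S \cup \{w_i\}$ and $S \cup \{v_i\}$ belong to $E(H)$. Both link neighborhoods have size at least $(1/2 + \eps)\binom{n-1}{k-1}$ inside $\binom{V(H) \setminus \{w_i,v_i\}}{k-1}$, so by inclusion-exclusion they intersect in at least $2\eps\binom{n-2}{k-1} = \Omega(n^{k-1})$ sets. Picking the halves greedily for $i = 1, \ldots, k-1$ and imposing pairwise vertex-disjointness excludes only $O(n^{k-2})$ candidates at each step. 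Multiplying the stages gives $|A(w, w_1, \ldots, w_{k-1})| \geq c\, n^{k(k-1)}$, while the trivial upper bound $|A(H)| \leq n^{k(k-1)}$ is immediate.

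For the probabilistic step, I would set $p := \beta/(2n^{k(k-1)-1})$ and form $\mathcal{F}$ by including each element of $A(H)$ independently with probability $p$. Three properties suffice. First, $\mathbb{E}|\mathcal{F}| \leq \beta n / 2$, so by Markov $|\mathcal{F}| \leq \beta n$ with probability at least $3/4$. Second, for each fixed $(w, w_1, \ldots, w_{k-1})$, $|A(w, w_1, \ldots, w_{k-1}) \cap \mathcal{F}|$ is a sum of independent Bernoullis of mean at least $c\beta n/2$, so by Chernoff and a union bound over at most $n^k$ tuples, $|A(w, w_1, \ldots, w_{k-1}) \cap \mathcal{F}| \geq c\beta n /4$ for every tuple with probability $1 - o(1)$. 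Third, the expected number of pairs in $\mathcal{F}$ sharing a vertex is $O(p^2 \cdot n^{2k(k-1)-1}) = O(\beta^2 n)$, so by Markov this quantity is $O(\beta^2 n)$ with probability at least $3/4$. Some realization of $\mathcal{F}$ satisfies all three events; deleting one member from each intersecting pair produces $\mathcal{A}$ with $|\mathcal{A}| \leq \beta n$ and $|A(w, w_1, \ldots, w_{k-1}) \cap \mathcal{A}| \geq c\beta n/4 - O(\beta^2 n) \geq \rho n$, where the last inequality uses $\beta \gg \rho$ and $\beta$ sufficiently small.

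The main obstacle is the supersaturation step of the counting, where the hypothesis $\delta_d(H) \geq (1/2 + \eps)\binom{n-d}{k-d}$ is used decisively: the threshold $1/2$ is exactly what forces the link neighborhoods of an arbitrary pair $w_i, v_i$ to overlap in $\Omega(n^{k-1})$ sets, producing abundant diamonds between them. Once the supersaturation count is in place, the sampling-and-cleanup argument is routine and parallels standard constructions of absorbing families in the literature.
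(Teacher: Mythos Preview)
Your proposal is correct and matches the paper's proof essentially line for line: both first use \cref{degree} to reduce to a vertex-degree condition, obtain a supersaturation bound $|A(w,w_1,\ldots,w_{k-1})|=\Omega(n^{k(k-1)})$ by counting edges through $w$ and then overlapping link neighborhoods for each pair $(w_i,v_i)$, and finish with the standard random-sample-plus-cleanup argument (Chernoff for the per-tuple lower bound, Markov for intersecting pairs). The only cosmetic difference is that the paper uses Chernoff rather than Markov for the size bound $|\mathcal{A}'|\le\beta n$, but both work.
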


\begin{proof}
By \cref{degree}, we have $\delta_1(H) \geq (1/2 + \eps) \binom{n-1}{k-1}$.
Therefore, for any distinct vertices $u, v \in V(H)$, we have $|N_{H}(u)\cap N_{H}(v)|\geq 2\eps\binom{n-1}{k-1}$.
Fix a $k$-set of distinct vertices $(w, w_1, \ldots, w_{k-1})$.
Since $\eps\gg 1/n$, there are at least $(1/2+3\eps/4)\binom{n-1}{k-1}$ many edges $w v_1 \ldots v_{k-1}$ disjoint from $\{w_1, \dotsc, w_{k-1}\}$.
For each such $k$-edge, we have that $|N_{H}(w_j)\cap N_{H}(v_j)|\geq 2\eps \binom {n-1}{k-1}$ for each $j\in [k-1]$. Thus for each $v_j$, there are at least 
$\eps \binom{n-1}{k-1}$ many choices for $f^v_{j}$, 
even if the previous half-diamonds $f^v_{j'}$ have been fixed. 
In summary, we have 
\[
|A(w, w_1, \ldots, w_{k-1})|\geq \left(\frac{1}{2} + \frac{3\eps}{4}\right)\binom{n-1}{k-1} \left(\eps\binom{n-1}{k-1}\right)^{k-1} \geq \beta n^{k(k-1)}.
\] 

Let $\mathcal{A'}$ be a random subset of $A(H)$ with each member chosen independently at random with probability $p=cn^{-k(k-1) + 1}$, where $c$ is a constant satisfying $\rho \ll c \ll \beta$.
Note that $|A(H)|\leq n^{k(k-1)}$.
Using Chernoff’s inequality, with probability more than $3/4$ we get that $|\mathcal{A'}|\leq \beta n$.
Similarly, using Chernoff’s inequality and a union bound, with probability more than $3/4$ we get that for each $(w, w_1, \ldots, w_{k-1})$, we have $|A(w, w_1, \ldots, w_{k-1})\cap \mathcal{A'}|\geq c\beta n/2$.
Let $Y$ be the number of pairs of absorbing tuples in $\mathcal{A'}$ that intersect in at least one vertex. Then 
\[
\mathbb E [Y]\leq p^2 n^{2(k-1)^2+2k-3}2^{2(k-1)^2+2k-2}\leq 4^{k^2-k+1} c^2 n.
\]
By Markov's inequality, with probability at least $1/2$, we have $Y\leq 4^{k^2-k+3/2} c^2 n$.
Fix an outcome of $\mathcal{A'}$ such that all above events hold.
For each intersecting pair in $\mathcal{A'}$, remove one of its participating tuples.
By doing so, we get a set $\mathcal{A}$ of size at most $\beta n$, consisting of vertex-disjoint absorbing tuples, and
\[
|A(w, w_1,\ldots, w_{k-1})\cap\mathcal{A}|\geq c\beta n/2- 4^{k^2-k+3/2} c^2 n \geq \rho n,
\]
for each $(w, w_1, \ldots, w_{k-1})$, as desired. 
\end{proof}

We need that absorbing tuples in $\mathcal{A}$ can be immersed by the embedding of any small $k$-expansion hypertree.
The next statement is what we will need for now; its proof will follow from a more general version which we state and prove in \cref{section:immersion}.

\begin{lemma}\label{lem:cover-absorb}
Suppose $1/\Delta, 1/k, \eps\gg \nu\gg \beta\gg 1/n$, and $1 \leq d < k$.
Let $H$ be a $k$-graph on $n$ vertices with  
\[
\delta_d(H)\geq \left(\frac{1}{2} +\eps \right)\binom{n-d}{k-d}
\] and $T_0^{(k)}$ be a $k$-expansion hypertree on $\nu n$ vertices with $\Delta_{1}(T_0^{(k)})\leq\Delta$.
Suppose that $\mathcal{A}$ is a set of at most $\beta n$ pairwise vertex-disjoint absorbing tuples in $H$.
Then there is an embedding $\phi:T_0^{(k)}\to H$ such that every member in $\mathcal{A}$ is immersed in $\phi(T_0^{(k)})$.
\end{lemma}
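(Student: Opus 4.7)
The plan is to embed $T_0^{(k)}$ by first selecting a matching $M$ in the anchor tree $T_0$ of size $(k-1)|\mathcal{A}|$, bijectively assigning its edges to the half-diamonds across $\mathcal{A}$, and then embedding the edges of $T_0^{(k)}$ in BFS order, forcing each matched edge onto its assigned half-diamond (thereby immersing it) and filling in the remaining edges greedily using the minimum $d$-degree condition.

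For the matching: by K\"onig's theorem applied to the tree $T_0$ with $|V(T_0)| \approx \nu n/(k-1)$ vertices and $\Delta_1(T_0) \leq \Delta$, the matching number is at least $(|V(T_0)|-1)/\Delta$, which is much larger than $(k-1)|\mathcal{A}| \leq (k-1)\beta n$ by the hierarchy $\nu \gg \beta$. I extract such a matching $M$ and assign its edges bijectively to the $(k-1)|\mathcal{A}|$ half-diamonds in $\bigcup \mathcal{A}$, so that each absorbing tuple receives exactly $k-1$ matched edges. Being a matching is essential: it ensures that no anchor of $T_0$ is simultaneously constrained by two distinct half-diamonds (which would be a conflict since distinct half-diamonds in $\mathcal{A}$ are pairwise vertex-disjoint). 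I also root $T_0$ at a vertex $r \notin V(M)$, which exists since $|V(M)| = 2(k-1)|\mathcal{A}| \ll |V(T_0)|$, so that each matched edge has a non-matched parent edge.

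I process the edges of $T_0^{(k)}$ in BFS order. For a non-matched edge $e = xy$ (with parent anchor $x$ already embedded), if the child anchor $y$ is incident to a matched edge $yz$ assigned to $f_j^v$, I require $\phi(y) \in f_j^v \setminus \{v_j\}$, and then pick a fresh $(k-2)$-set $Z_e$ so that $\{\phi(x),\phi(y)\} \cup Z_e \in E(H)$; otherwise $\phi(y)$ is chosen freely among fresh vertices. For a matched edge $e = xy$ assigned to $f_j^v$, the previous step has already placed $\phi(x) \in f_j^v \setminus \{v_j\}$, and I complete the embedding by choosing $\phi(y) \in f_j^v \setminus \{v_j, \phi(x)\}$ and setting $Z_e := f_j^v \setminus \{\phi(x), \phi(y)\}$, which contains $v_j$. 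Thus $e$ is mapped onto $f_j^v$ with $v_j$ as the image of an expansion vertex, so $f_j^v$ is immersed. Throughout, I avoid the union of $V(T_0^{(k)})$'s images already placed and the vertices of $\bigcup \mathcal{A}$ not yet used, a set of size $O(\nu n)$; the slack $\eps \gg \nu \gg \beta$ gives ample room.

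The main obstacle is the constrained extension step for a non-matched edge with a matched child: extending the pre-placed $\phi(x)$ by finding $\phi(y)$ inside the prescribed $(k-1)$-set $S = f_j^v \setminus \{v_j\}$ together with $k-2$ fresh expansion vertices forming an edge of $H$ with $\phi(x)$. For $d \geq 2$, \cref{degree} yields $\delta_2(H) \geq (1/2 + \eps) \binom{n-2}{k-2}$; for any candidate $\phi(y) \in S$ the codegree neighborhood of $\{\phi(x), \phi(y)\}$ is abundant, and $\eps \gg \nu$ lets us avoid the $O(\nu n)$ used-or-reserved vertices. For $d = 1$ codegree between arbitrary pairs may be very small, so this step is genuinely delicate: the plan is to exploit the freedom in choosing the bijection $\sigma : M \to \bigcup \mathcal{A}$ together with an averaging argument that the vertex-degree hypothesis $\delta_1(H) \geq (1/2+\eps) \binom{n-1}{k-1}$ forces most vertices to have codegree at least $(1/2 + \eps/2)\binom{n-2}{k-2}$ with $\phi(x)$, so that at least one vertex in $S$ can be realized as $\phi(y)$. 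Formalizing this last point rigorously, in a way that accommodates the more general immersion statement needed elsewhere in the paper, is the technical core, to be addressed in \cref{section:immersion}.
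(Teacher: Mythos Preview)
Your matching-plus-BFS scheme is genuinely different from the paper's route. The paper gives no direct argument here; it deduces the lemma from the more general \cref{lemma:immersion}, whose proof does not use a matching in $T_0$ at all. Instead, one selects vertices $x_1,\dots,x_m$ in $T_0$ that are pairwise at distance at least five (an independent set in the fourth power of $T_0$), maps the edge at each $x_i$ directly onto its assigned half-diamond, and then grows the partial embedding by connecting consecutive pieces with loose paths through the unused vertices. The distance-five spacing forces each connecting loose path to have length at least two, and this is the crucial point for $d=1$: the last two edges of the connection are supplied by \cref{lemma:shortpath}, which produces many $(x',y)$-paths of length two from only the vertex-degree bound $\delta_1(H)\ge (2^{-(k-1)}+\eps)\binom{n-1}{k-1}$.

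For $d\ge 2$ your argument is fine, since \cref{degree} gives enough $2$-degree to extend any fixed pair. The gap is precisely the constrained step you flag when $d=1$: given $\phi(x)$, you need some $\phi(y)$ in a prescribed $(k-1)$-set $S=f_j^v\setminus\{v_j\}$ with $\deg_H(\{\phi(x),\phi(y)\})$ large enough to avoid $O(\nu n)$ vertices. Your averaging observation only yields that at most a $(1/2-\eps)$-fraction of all vertices have small $2$-degree with $\phi(x)$; but $V(\mathcal{A})$ has size $O(\beta n)$ and nothing prevents it from lying entirely inside that bad half, so neither freedom in the bijection $\sigma$ nor the averaging bound guarantees a usable vertex in $S$. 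The paper's approach avoids this one-edge constrained extension altogether by always leaving at least two edges of slack in the connection, which is exactly what makes the $\delta_1$ case go through.
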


Using \cref{lem:counting-absorb} and \cref{lem:cover-absorb}, we obtain the following result.

\begin{lemma}[Absorbing Lemma]\label{absorb-lem}
Suppose $1/\Delta,1/k\gg \alpha\gg \delta\gg 1/n$.
Let $T$ be a tree with $\Delta_1(T) \leq \Delta$, such that its $k$-expansion has exactly $n$ vertices.
Let $T_1 \subseteq T$ be a subtree on at least $(1 - \alpha)|V(T)|$ vertices.
Let $H$ be an $n$-vertex $k$-graph and 
$\psi$ be an embedding $\psi: \smash{T_1^{(k)}} \to H$.
Suppose that $\mathcal{A}$ is a family of pairwise vertex-disjoint absorbing tuples in $H$ such that
\stepcounter{propcounter}
\begin{enumerate}[label = {{\rm (\Alph{propcounter}\arabic{enumi})}}]
    \item every member in $\mathcal{A}$ is immersed in $\psi(\smash{T_1^{(k)}})$; and
    \item for any $k$-set $(w, w_1,\ldots, w_{k-1})$ of $H$, there are at least $\rho n$ absorbing tuples in $\mathcal{A}$.
\end{enumerate}
Then there exists an embedding of $\smash{T^{(k)}}$ in $H$.
\end{lemma}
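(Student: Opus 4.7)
The plan is to iteratively extend $\psi$ one edge at a time, processing the anchors in $V(T)\setminus V(T_1)$ in a valid tree ordering; at each step we consume a single absorbing tuple from $\mathcal{A}$ via a local switch to make room for a new edge of $T^{(k)}$.

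Order the vertices of $V(T)\setminus V(T_1)$ as $x_1,\ldots,x_m$ with $m\le \alpha|V(T)|\le \alpha n$ so that each $x_i$ has a unique neighbor $y_i$ in $V(T_1)\cup\{x_1,\ldots,x_{i-1}\}$; such an ordering exists because $T$ is a tree. For each $i\in[m]$, let $e_i \in E(T^{(k)})$ be the hyperedge arising from the expansion of the tree edge $x_iy_i$, and set $T_{(i)}^{(k)} := T_1^{(k)}\cup\{e_1,\ldots,e_i\}$, so that $T_{(0)}^{(k)} = T_1^{(k)}$ and $T_{(m)}^{(k)} = T^{(k)}$. Starting from $\psi_0:=\psi$, we construct embeddings $\psi_i:T_{(i)}^{(k)}\to H$ satisfying two invariants: each switch only modifies images of expansion vertices (so anchor images remain fixed once assigned), and every absorbing tuple of $\mathcal{A}$ not yet consumed remains immersed in $\psi_i(T_{(i)}^{(k)})$.

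At step $i$, the leftover set $V(H)\setminus \psi_{i-1}(V(T_{(i-1)}^{(k)}))$ has exactly $(k-1)(m-i+1)\ge k-1$ vertices, so we may choose any $k-1$ of them to serve as $w_1,\ldots,w_{k-1}$, and set $w:=\psi_{i-1}(y_i)$ (well-defined since anchor images are stable and $y_i$ was embedded in an earlier stage). Since the tuples in $\mathcal{A}$ are pairwise vertex-disjoint, at most $i-1<m$ of them have been consumed so far; by condition (B) and the implicit hierarchy $\rho\gg\alpha$, we have at least $\rho n - m > 0$ unused tuples in $A(w,w_1,\ldots,w_{k-1})\cap\mathcal{A}$, so we can select one such tuple $(f^v_1,\ldots,f^v_{k-1})$. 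For each $j\in[k-1]$, the immersion hypothesis guarantees an edge of $T_{(i-1)}^{(k)}$ whose $\psi_{i-1}$-image is $f^v_j$ with $v_j$ the image of some expansion vertex; reassign that expansion vertex from $v_j$ to $w_j$, so the relevant edge's new image is $f^w_j\in E(H)$. Performing this simultaneously for all $j$ covers $w_1,\ldots,w_{k-1}$ and releases $v_1,\ldots,v_{k-1}$. Finally, define $\psi_i$ by mapping $x_i\mapsto v_1$ and sending the $k-2$ expansion vertices of $e_i$ bijectively to $v_2,\ldots,v_{k-1}$; by the definition of an absorbing tuple, $\psi_i(e_i)=\{w,v_1,\ldots,v_{k-1}\}\in E(H)$, so $\psi_i$ is an embedding of $T_{(i)}^{(k)}$.

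Because each switch modifies the embedding only on the vertex set of the current absorbing tuple (together with the $w_j$'s it consumes), and all other tuples in $\mathcal{A}$ are vertex-disjoint from this one, the immersion property for unused tuples is preserved, maintaining both invariants. Iterating for $m$ steps produces the desired embedding $\psi_m:T^{(k)}\to H$. The only substantive point is the availability argument, namely that at every step an unused absorbing tuple exists; this reduces precisely to $\rho n > m$, and everything else is purely local and routine.
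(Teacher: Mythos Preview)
Your proof is correct and follows essentially the same iterative switching strategy as the paper's: extend the embedding one leaf edge at a time, at each step selecting $k-1$ leftover vertices and an unused absorbing tuple for them, then performing the diamond switches to free $v_1,\dots,v_{k-1}$ and placing the new edge there. The only cosmetic difference is bookkeeping: the paper conservatively marks all $(k-1)!$ permutations of the used tuple as consumed, whereas you (correctly, given pairwise vertex-disjointness of $\mathcal{A}$) remove just one, yielding the slightly sharper bound $\rho n>m$ instead of $\rho n>(k-1)!\,m$.
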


\begin{proof}
Let $n_1 = |V(\smash{T_1^{(k)}})|$, note that $n_1 \geq (1-2\alpha)n$ holds.
Let $V(H) \setminus V(\psi(\smash{T_1^{(k)}})) = \{x_{1}, x_{2}, \ldots, x_{n-n_1}\}$ and $E(\smash{T_1^{(k)}})=\{e_1, e_2, \dots, e_{t_1}\}$.
Recall that every $k$-expansion hypertree has a number of vertices that is $1 \bmod k-1$, and therefore $n - n_1$ is divisible by $k-1$.
Let $t:=\frac{n-n_1}{k-1}$.
Note that since $T_1 \subseteq T$ is a subtree, we can obtain $T$ from $T_1$ by iteratively adding $t$ leaves to $T_1$.
Thus,  there is a sequence of $k$-expansion hypertrees $\smash{T^{(k)}_0}, \smash{T^{(k)}_1}, \dotsc, \smash{T^{(k)}_{t}}$, such that $\smash{T^{(k)}_{t}} = T^{(k)}$, and for each $i \in [t]$, we have $E(\smash{T^{(k)}_i}) = \{e_1, e_2, \dotsc, e_{t_1 + i}\}$, and $\smash{T^{(k)}_i}$ is obtained from $\smash{T^{(k)}_{i-1}}$ by adding the edge $e_{t_1+i-1}$.

Inductively, we will build for each $i \in \{0\} \cup [t]$ an embedding $\phi_{i}: \smash{T^{(k)}_{i}} \to H$ along with a subset $\mathcal{A}_{i} \subseteq \mathcal{A}$ satisfying
\begin{enumerate}
    \item \label{item:embeddingstandard-i} $V(\phi_{i}(\smash{T^{(k)}_{i}}))= V(\psi(\smash{T^{(k)}_0})) \cup \{x_{1}, \ldots, x_{(k-1)i}\};$
    \item $|\mathcal{A}_{i}| \leq i(k-1)!$; and
    \item \label{item:embeddingstandard-iii} every absorbing tuple in $\mathcal{A} \setminus \mathcal{A}_{i}$ is immersed in $\phi_{i}(\smash{T_i^{(k)}})$.
\end{enumerate}
In this way, the final embedding $\phi_{t}$ will be the desired embedding of $T^{(k)}$ in $H$.

The base case $i=0$ holds trivially by letting $\phi_0 := \psi$.
Now suppose that for some $i$ with $0 \leq i <t$, we have constructed $\phi_{i}$ and $\mathcal{A}_{i}$ satisfying \ref{item:embeddingstandard-i}--\ref{item:embeddingstandard-iii}, and we aim to define $\phi_{i+1}$ and $\mathcal{A}_{i+1}$.
 Let $e_{t_1+i+1}= \{z_{1}, \dots, z_{k}\}$ be the next edge to embed, where $z_{1} \in V(\smash{T^{(k)}_{i}})$ and $z_{2},\dots, z_{k} \notin V(T^{(k)}_{i})$. 
 Let $w_{1} := \phi_{i}(z_{1})$ and $w_{j} := x_{(k-1)i+j-1}$ for $j\in \{2,3,\dots,k\}$.
 Since 
$\left| A(w, w_{1},\dots, w_{k-1}) \cap (\mathcal{A} \setminus \mathcal{A}_{i}) \right| \geq \rho n - i(k-1)! \geq (\rho - 2(k-1)! \alpha)n > 0,$
there exists an absorbing tuple $(f^v_{1},\dots, f^v_{k-1}) \in \mathcal{A} \setminus \mathcal{A}_{i}$ for $(w, w_{1},\dots, w_{k-1})$.
For $j \in [k-1]$, write $f^v_{j} = (v_{j}, v_{j}^1, \ldots, v_{j}^{k-1}).$
Update $\mathcal{A}_{i+1} := \mathcal{A}_{i} \cup \mathcal{S} $, where
$ \mathcal{S}$ consists of all ordered $(k - 1)$-tuples formed by permutations of the set $\{f^v_{1}, \dots, f^v_{k-1}\}$.
Now we define $\phi_{i+1}$ from $\phi_{i}$ via
\[\phi_{i+1}(v) = 
\begin{cases} 
w_{j} & \text{if } v = \phi_{i}^{-1}(v_{j}) \text{ for } j \in [k-1], \\
v_{j} & \text{if } v = z_{j} \text{ for } j \in [k-1], \\
\phi_{i}(v) & \text{otherwise}.
\end{cases}\]
By construction, $\phi_{i+1}$ is an embedding of $\smash{T^{(k)}_{i+1}}$ satisfying \ref{item:embeddingstandard-i}--\ref{item:embeddingstandard-iii}.
This finishes the induction and the proof.
\end{proof}

\begin{proof}[Proof of \cref{theorem:main}]
Let $k, d, \eps, \Delta$ be given, so that $k > d \geq 1$, $\eps>0$ and $\Delta\in \mathbb N$.
Let $H$ be an $n$-vertex $k$-graph with 
\[\delta_d(H) \geq \left(\max\left\{\frac{1}{2}, \deltaLHC{k}{d}\right\} + \eps\right)\binom{n-d}{k - d},\] 
and let $T$ be an $n'$-vertex tree with
$\Delta_1(T) \le \Delta$, where $n=(k-1)n'-k+2$.
We need to find an embedding of $T^{(k)}$ in $H$.
Clearly, we have $|V(T^{(k)})|=n$ and $\Delta_1(T^{(k)}) \le \Delta$.
For the remainder of the proof, we choose $\nu, \beta, \rho, \alpha$ so that
\begin{equation}\label{equa-2}
1/k, 1/\Delta, \eps\gg \nu \gg \beta \gg \rho\gg \alpha\gg 1/n,  
\end{equation}
and furthermore $\ell$ is an odd integer.

First, we find the absorbing structures.
Using \cref{lem:counting-absorb} to $H$, we get a family $\mathcal{A}$ consisting of at most $\beta n$ pairwise vertex-disjoint absorbing tuples such that for every $k$-set $(w, w_1,\ldots, w_{k-1})$ in $H$, we have 
\[
|A(w, w_1,\ldots, w_{k-1})\cap\mathcal{A}|\geq \rho n.
\]

Now, fix an arbitrary leaf vertex $x \in V(T)$, and form a rooted tree $(T, x)$ by rooting $T$ at $x$.  
Let $u$ be a vertex in $T$ such that 
$\nu  n' \le |V(T(u))| \le \Delta \nu  n'$
(such a vertex can be found by letting $u$ be a maximal-depth vertex with $\nu n' \le |V(T(u))|$).
Note that the $k$-expansion hypertree $T^{(k)}(u)$ has size at most $\Delta \nu n$. 
Applying \cref{lem:cover-absorb},
there is an embedding
$\phi_0: T^{(k)}(u) \to H$ 
such that every element in $\mathcal{A}$ is immersed in $\phi_0(T^{(k)}(u))$. Let $u':=\phi_0 (u)$.
Let $T_1 := T - (V(T(u)) \setminus \{u\})$, $H_1 := H - (V(\phi_0 (T^{(k)}(u))) \setminus \{u'\})$ and $|V(H_1)| := n_1$. Note that
\[\delta_d(H_1) \geq \left(\max\left\{\frac{1}{2}, \deltaLHC{k}{d}\right\} + \frac{\eps}{2} \right)\binom{n_1 - d}{k - d}.\]

Secondly, we find an embedding of the almost spanning $k$-expansion hypertree $T_1^{(k)}$ in $H_1$.
Let $T_2$ be obtained from $T_1$ by removing leaves, different from $u$, repeatedly, in such a way that $|V(T^{(k)}_1)| - |V(T^{(k)}_2)|=\alpha n$, where $\alpha$ is a constant satisfying (\ref{equa-2}).
We invoke \cref{almost-cover} with $T^{(k)} := T_2^{(k)}$, $H := H_1$, $v := u$ and $w := u'$, there is an embedding $\phi_1: T_2^{(k)} \to H_1$ such that $\phi_1(u) = u'$.
Let $T^{(k)}_0:=T^{(k)}_2\cup T^{(k)}(u)$.
Thus, we obtain an embedding of $T_0^{(k)}$ in $H$.

Finally, we use the absorbers. By (\ref{equa-2}),
$|V(H)|-|V(T_0^{(k)})|=\alpha n \ll \rho n$.  
Applying \cref{absorb-lem}, there is an embedding of $T^{(k)}$ in $H$.
\end{proof}

\section{Basic gadgets} \label{section:basicgadgets}

In this section, we describe how to find the structures for the basic switches in the host $k$-graph $H$.
In the next section, we will combine the basic switches to form complex gadgets.
Next, we will describe how to embed a small tree to `immerse' in these gadgets correctly.

\subsection{The diamond graph} \label{ssection:diamond}
Given an $n$-vertex $k$-graph $H$ and a constant $\gamma > 0$, we define the  \emph{$\gamma$-diamond graph} $G$ of $H$ with $V(G) = V(H)$ such that $xy \in E(G)$ if and only if there are at least $\gamma \binom{n}{k-1}$ many $(x,y)$-diamonds in $H$. 

\begin{lemma} \label{lemma:diamondgraph-mindeg}
Let $0 < \gamma \leq 1/100$ and $n$ be sufficiently large.
Let $H$ be an $n$-vertex $k$-graph with $\delta_{k-1}(H) \geq n/3$, and let $G$ be its $\gamma$-diamond graph.
Then $\delta(G) \geq n/10$.
\end{lemma}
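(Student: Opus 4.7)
The plan is a simple double counting of diamonds through a fixed vertex, bounded below via the codegree hypothesis and above via the definition of the $\gamma$-diamond graph.

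First I would fix an arbitrary $x \in V(H)$ and consider the total count $D(x) := \sum_{y \neq x} D(x,y)$, where $D(x,y)$ denotes the number of $(x,y)$-diamonds in $H$. Each $(x,y)$-diamond is uniquely determined by a $(k-1)$-set $S \subseteq V(H) \setminus \{x,y\}$ with $S \cup \{x\}, S \cup \{y\} \in E(H)$, so rearranging the sum yields
\[
D(x) \;=\; \sum_{S \in N_H(x)} \bigl( \deg_H(S) - 1 \bigr),
\]
where the $-1$ accounts for the contribution $y = x$. The hypothesis $\delta_{k-1}(H) \geq n/3$ gives $\deg_H(S) \geq n/3$ for every such $S$, while \cref{degree} applied to the same hypothesis yields $\deg_H(x) \geq \tfrac{n}{3(n-k+1)} \binom{n-1}{k-1} > \tfrac{1}{3}\binom{n-1}{k-1}$. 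Since $\binom{n-1}{k-1} = (1-o(1)) \binom{n}{k-1}$, this gives
\[
D(x) \;\geq\; \deg_H(x) \cdot (n/3 - 1) \;\geq\; \left( \tfrac{1}{9} - o(1) \right) n \binom{n}{k-1}.
\]

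For the matching upper bound, let $N_G(x)$ be the neighbourhood of $x$ in $G$ and write $B := V(H) \setminus (N_G(x) \cup \{x\})$. For $y \in B$, the definition of the $\gamma$-diamond graph gives $D(x,y) < \gamma \binom{n}{k-1}$, while trivially $D(x,y) \leq \binom{n}{k-1}$ in general. Hence
\[
D(x) \;\leq\; |B|\, \gamma \binom{n}{k-1} + |N_G(x)| \binom{n}{k-1} \;\leq\; \bigl( n\gamma + |N_G(x)| \bigr) \binom{n}{k-1}.
\]
Combining both bounds and dividing through yields $|N_G(x)| \geq (1/9 - \gamma - o(1)) n$. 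Since $\gamma \leq 1/100$, the constant is at least $1/9 - 1/100 = 91/900 > 1/10$, so $|N_G(x)| \geq n/10$ for $n$ sufficiently large.

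The argument is essentially a careful bookkeeping of constants: the factor $1/9$ comes from the product of two factors of $1/3$ (one from the codegree of $x$ and one from the codegree of each $S \in N_H(x)$), and the hypothesis $\gamma \leq 1/100$ is chosen precisely to leave enough slack above the target $1/10$. There is no serious obstacle beyond ensuring that the $o(1)$ error terms arising from passing between $\binom{n-1}{k-1}$ and $\binom{n}{k-1}$ are absorbed, which uses that $n$ is taken sufficiently large.
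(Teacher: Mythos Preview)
Your proof is correct and takes essentially the same double-counting approach as the paper: both fix $x$, count diamonds through $x$ from below via the codegree condition (yielding a $1/9$ factor) and from above by splitting into neighbours and non-neighbours in $G$, then compare. The only cosmetic difference is that the paper parametrizes the count by tuples $(S,u,v)$ with $S$ a $(k-2)$-set and $xuv$ a path in the link graph $L_H(S)$, which counts each diamond $k-1$ times, whereas you parametrize directly by the $(k-1)$-set $S \in N_H(x)$ and avoid that factor; the underlying argument and constants are identical.
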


\begin{proof}
For a set $S \subseteq V(H)$, we denote by $L_H(S)$ the \emph{link graph of $S$ in $H$}, which is the $(k-|S|)$-graph on the vertex set $V(H)\setminus S$ with an edge $e$ whenever $e\cup S \in E(H)$.
Let $x$ be any vertex of $H$, and let $S \subseteq V(H) \setminus \{x\}$ be any $(k-2)$-set.
The link graph $L_H(S)$ is a graph, and by \cref{degree} it has minimum degree at least $n/3$.
Hence, the number of paths $x u v$ in $L_H(S)$ is at least $(n/3)(n/3 - 1)$.
Note that each such path yields an $(x,v)$-diamond.
The number of choices for $(S, u, v)$ as before is at least \[
\frac n3 \times \left(\frac n3 - 1\right)\times \binom{n-1}{k-2} \geq \frac{(k-1) (n-3)}{9} \binom{n}{k-1}.\]
Let $U$ be the set of vertices $v$ that participate in at least $\gamma (k-1) \binom{n}{k-1}$ tuples $(S, u, v)$ as before.
Any vertex can participate in at most $\binom{n}{k-2}n = (k-1)\binom{n}{k-1} + o(n^{k-1})$ such tuples.
Then
\[ \frac{(k-1) (n-3)}{9} \binom{n}{k-1} \leq |U| (k-1)\binom{n}{k-1} + (n - |U|) (k-1)\gamma \binom{n}{k-1} + o(n^{k-1}), \]
which implies $|U| \geq n(1/9 - \gamma + o(1))/(1 + \gamma) \geq n/10$.
Every $(x,v)$-diamond yields at most $k-1$ tuples $(S,u,v)$, so we have $U \subseteq N_G(x)$, and we are done.
\end{proof}

\begin{lemma} \label{lemma:diamondgraph-indep}
Given $\eps \geq \gamma > 0$, let $H$ be an $n$-vertex $k$-graph with $\delta_{k-1}(H) \geq (1/3 + \eps)n$, and let $G$ be its $\gamma$-diamond graph.
Then $\alpha (G)\le 2$. 
\end{lemma}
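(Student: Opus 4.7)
The plan is a proof by contradiction. Suppose there exist pairwise non-adjacent $x, y, z \in V(G)$. For each $v \in V(H)$, write $L_v := \{S \in \binom{V(H) \setminus \{v\}}{k-1} : S \cup \{v\} \in E(H)\}$ for the link of $v$. A straightforward observation is that the number of $(u, v)$-diamonds in $H$ equals $|L_u \cap L_v|$: both parameterize the $(k-1)$-sets $S$ disjoint from $\{u,v\}$ with $S \cup \{u\}, S \cup \{v\} \in E(H)$. Thus the non-adjacency hypothesis translates to $|L_u \cap L_v| < \gamma \binom{n}{k-1}$ for each pair in $\{x, y, z\}$.

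Next, I would derive the bound $|L_v| \geq (1/3 + \varepsilon)\binom{n}{k-1}$ for every $v$. This comes from using the codegree condition directly: double-counting pairs $(T, S)$ with $T \in \binom{V(H) \setminus \{v\}}{k-2}$ and $T \subseteq S \in L_v$ gives $(k-1)|L_v| = \sum_T \deg_H(T \cup \{v\}) \geq \binom{n-1}{k-2}(1/3+\varepsilon) n$, and rearranging via the identity $\frac{n}{k-1}\binom{n-1}{k-2} = \binom{n}{k-1}$ yields the claim. Crucially, this bound matches the ambient $\binom{V(H)}{k-1}$-space exactly, rather than the strictly smaller $\binom{V(H)\setminus\{v\}}{k-1}$-space that a direct invocation of \cref{degree} would suggest.

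To conclude, I would view $L_x, L_y, L_z$ as subsets of $\binom{V(H)}{k-1}$ and apply inclusion-exclusion:
\[
\binom{n}{k-1} \geq |L_x \cup L_y \cup L_z| \geq \sum_{v \in \{x,y,z\}} |L_v| - \sum_{\{u,v\} \subseteq \{x,y,z\}} |L_u \cap L_v| > (1 + 3\varepsilon - 3\gamma)\binom{n}{k-1},
\]
forcing $\gamma > \varepsilon$ and contradicting $\varepsilon \geq \gamma$. The main delicate point is the middle step: the weaker bound $|L_v| \geq (1/3+\varepsilon)\binom{n-1}{k-1}$ obtainable via a direct application of \cref{degree} leaves an $O(1/n)$ slack that makes the inclusion-exclusion argument fail at the boundary $\varepsilon = \gamma$.
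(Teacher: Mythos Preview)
Your proof is correct and follows essentially the same route as the paper's. Both arguments view $L_x, L_y, L_z$ as subsets of $\binom{V(H)}{k-1}$ and use the pigeonhole/inclusion--exclusion observation that three sets of combined size at least $(1+3\eps)\binom{n}{k-1}$ inside a universe of size $\binom{n}{k-1}$ must have a pairwise intersection of size at least $\eps\binom{n}{k-1} \geq \gamma\binom{n}{k-1}$. The paper states $\deg_H(x)+\deg_H(y)+\deg_H(z) \geq (1+3\eps)\binom{n}{k-1}$ without justification; your explicit double-counting derivation of $|L_v| \geq (1/3+\eps)\binom{n}{k-1}$ is exactly what is needed to make that line rigorous, and your remark that \cref{degree} alone only yields the weaker $\binom{n-1}{k-1}$ bound is a fair point.
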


\begin{proof}
 Given any three vertices $x, y, z \in V(H)$,
    we have \[
    \deg_H(x) + \deg_H(y) + \deg_H(z) \geq (1 + 3 \eps) \binom{n}{k-1}.\]
    Thus, for at least one pair of vertices, say $\{x,y\}$, we have \[|N_H(x) \cap N_H(y)| \geq \eps \binom{n}{k-1} \geq \gamma \binom{n}{k-1}.\]
    This means that $xy \in E(G)$, so $\{x,y,z\}$ is not independent in~$G$.
\end{proof}

We want to understand the components of $G$.
In fact, we require a bit more and want to work with components that are `inseparable', meaning that they have a constant proportion of edges across every possible cut.
The main property we want to use from inseparable graphs is that they have many bounded-length paths between any pair of vertices (see~\cref{lemma:manypaths}).

Given a graph $G = (V,E)$, a vertex partition $\{U_1, U_2\}$ with $U_1, U_2$ nonempty is a \emph{$\mu$-separation} if $e(U_1, U_2) < \mu|U_1||U_2|$. 
We say that a graph $G$ is \emph{$\mu$-inseparable} if it does not have a $\mu$-separation.

\begin{lemma} \label{lemma:diamonggraph-separation}
Given $\mu\gg 1/n$, let $G$ be an $n$-vertex graph with $\delta(G) \geq 7 \mu n$ and $\alpha(G) \leq 2$.
	Then $G$ satisfies either
    \begin{enumerate}
        \item $G$ is $\mu$-inseparable, or
        \item there is a $\mu$-separation $\{U_1,U_2\}$ of  
  $V(G)$ such that $G[U_1]$ and $G[U_2]$ are $\mu$-inseparable.
    \end{enumerate}
\end{lemma}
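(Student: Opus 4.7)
The plan is to choose $\{U_1, U_2\}$ to be a $\mu$-separation of $G$ that minimises $|U_1|$ under the convention $|U_1| \leq |U_2|$, and then verify separately that both $G[U_1]$ and $G[U_2]$ are $\mu$-inseparable; if $G$ is $\mu$-inseparable from the start there is nothing to prove. The $G[U_1]$ side follows cleanly from minimality: if $U_1 = A \cup B$ were a $\mu$-separation with $|A|, |B| \geq 1$, then pigeonhole on $e_G(A, U_2) + e_G(B, U_2) = e_G(U_1, U_2) < \mu(|A|+|B|)|U_2|$ yields, after relabelling, $e_G(A, U_2) < \mu|A||U_2|$, and combining with $e_G(A,B) < \mu|A||B|$ shows that $\{A, V(G) \setminus A\}$ is a $\mu$-separation with $|A| < |U_1|$, contradicting the choice of $U_1$.

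For $G[U_2]$, the assumption $\alpha(G) \leq 2$ enters via a direct triple count. Assume a $\mu$-separation $U_2 = C \cup D$ with $|C| \leq |D|$; in the favourable pigeonhole case we have $e_G(U_1, C) < \mu|U_1||C|$, and $|C| \leq |D|$ together with $e_G(U_1, C) + e_G(U_1, D) < \mu|U_1|(|C|+|D|)$ gives $e_G(U_1, D) \leq 2\mu|U_1||D|$. Then the number of triples $(x,y,z) \in U_1 \times C \times D$ containing at least one $G$-edge is at most
\[ e_G(U_1, C)|D| + e_G(U_1, D)|C| + e_G(C, D)|U_1| < 4\mu|U_1||C||D|. \]
Since $\delta(G) \geq 7\mu n$ forces $\mu < 1/7 < 1/4$, a strictly positive fraction of triples are pairwise non-adjacent in $G$, giving $\alpha(G) \geq 3$ and contradicting the hypothesis.

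The main technical obstacle is the asymmetric pigeonhole case, when the sparse side is $D$ rather than $C$ and the naive triple count fails. To handle it, I would prove a density estimate $|\bar G[U_2]| \leq 2\mu|U_2|^2$ derived from $\alpha(G) \leq 2$: triangle-freeness of $\bar G$ means that any non-edge $u_1 u_2 \in \bar G[U_2]$ has $N_G(u_1) \cup N_G(u_2) \supseteq U_1$, and a Cauchy--Schwarz computation then turns the bipartite sparsity $e_G(U_1, U_2) < \mu|U_1||U_2|$ into the claimed bound. Combined with $|U_1| \geq 6\mu n$ (obtained from $\delta(G) \geq 7\mu n$ and the existence of the $\mu$-separation by averaging), this forces the smaller side $|C|$ of any $U_2$-split to be $O(\mu)|U_2|$; then either the minimality of $|U_1|$ is contradicted directly via the $\mu$-separation $\{C, V(G) \setminus C\}$, or an iterative absorption of $C$ into $U_1$ (preserving the $\mu$-inseparability of the merged set via the same density estimate) terminates the argument.
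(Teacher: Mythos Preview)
Your argument for $G[U_1]$ is correct and elegant --- the minimality of $|U_1|$ disposes of any internal $\mu$-separation in one line. Your treatment of $G[U_2]$ in the favourable pigeonhole case (a) via the triple count is also correct. The density estimate $|\bar G[U_2]| \leq 2\mu|U_2|^2$ is valid (though the argument is closer to a ``small/large degree'' split than Cauchy--Schwarz: the vertices of $U_2$ with fewer than $|U_1|/2$ neighbours in $U_1$ form a clique, and there are at most $2\mu|U_2|$ of the others).

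The gap is in case (b). You correctly observe that $\{U_1 \cup C, D\}$ is again a $\mu$-separation, and that the density estimate transfers. But the density bound alone does not give $\mu$-inseparability of the merged set $G[U_1 \cup C]$: it only forces the smaller side of any internal split to have size $O(\mu)|U_1 \cup C|$, which is not a contradiction. Your $G[U_1]$-inseparability argument relied on the minimality of $|U_1|$, and after absorption $|U_1 \cup C| > |U_1|$, so minimality no longer applies. The iterative absorption therefore does not terminate with both sides inseparable; you keep shrinking $U_2$, but you lose control of the growing $U_1$. Nor does the alternative branch (``contradict minimality via $\{C, V\setminus C\}$'') work, since in case (b) you have no bound on $e_G(C, U_1)$.

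The paper's proof sidesteps this asymmetry by choosing a different extremal separation: it minimises the \emph{density} $e(U_1,U_2)/(|U_1||U_2|)$ rather than $|U_1|$. The key structural input from $\alpha(G)\le 2$ is then used differently: a vertex with few neighbours across the cut has its non-neighbours forming a clique, so each $U_i$ contains a clique $K_i$ on a $(1-\mu)$-fraction of its vertices. Every vertex of $U_i \setminus K_i$ must have at least $2\mu n$ neighbours in $K_i$ (else moving it across lowers the density ratio), and from this clique-plus-attachments structure the $\mu$-inseparability of each $G[U_i]$ follows by a short case analysis --- symmetrically for both sides. Switching your minimisation criterion to the density ratio, or alternatively extracting and using the large cliques $K_1, K_2$ directly, would close your gap.
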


\begin{proof}
    Suppose $G$ is not $\mu$-inseparable.
     Among all $\mu$-separations $\{U_1, U_2\}$ of $V(G)$, choose one that minimizes $e(U_1, U_2)/|U_1||U_2|$.
    We now show that $G[U_1]$ and $G[U_2]$ are $\mu$-inseparable.
    
    Since $e(U_1, U_2) < \mu |U_1||U_2|$, there exists $u \in U_2$ with $\deg_G(u, U_1) < \mu |U_1|$.
    For any pair $x, y \in U_1 \setminus N(u)$, we must have $xy \in E(G)$, since $\alpha(G) \leq 2$.
    Hence, there exists a clique on vertex set $K_1$ of size at least $(1 - \mu)|U_1|$ in $G[U_1]$.
    Similarly, there exists a clique on vertex set $K_2$ of size at least $(1 - \mu)|U_2|$ in $G[U_2]$.
    Let $V_i = U_i \setminus K_i$ for $1 \leq i \leq 2$.

    We claim that for every $v \in V_1$, $\deg_G(v, K_1) > 2 \mu n$.
    Suppose otherwise.
    Hence, there exists a vertex $v \in V_1$ such that $\deg_G(v, U_1) = \deg_G(v, K_1) + \deg_G(v, V_1) \leq 2 \mu n + |V_1| \leq 2\mu n + \mu|U_1| \leq 3 \mu n$.
    Therefore, $\deg_G(v, U_2) \geq \delta(G) - 3 \mu n \geq 4 \mu n \geq \deg_G(v, U_1) + \mu n$.
    Now consider the partition $\{U'_1, U'_2\}$, where $U'_1 = U_1 \setminus \{v\}$ and $U_2' = U_2 \cup \{v\}$.
    We have $e(U'_1, U'_2) = e(U_1,U_2) - \deg_G(v,U_2) + \deg_G(v,U_1) \leq e(U_1,U_2) - \mu n$,
    which implies \[\frac{e(U'_1,U'_2)}{|U'_1||U'_2|} = \frac{e(U'_1,U'_2)}{|U_1||U_2| + |U_1| - |U_2| - 1} \leq \frac{e(U_1,U_2) - \mu n}{|U_1||U_2| + |U_1| - |U_2| - 1} < \frac{e(U_1,U_2)}{|U_1||U_2|}, \]
    where the last inequality follows from $|U_2|-|U_1| + 1 < n$ and $e(U_1,U_2) < \mu|U_1||U_2|$.
    This contradicts the minimality of $\{U_1, U_2\}$ and proves the claim.
    Similarly, for every $v \in V_2$, $\deg_G(v, K_2) > 2 \mu n$.
    In particular, note that this implies that $\min\{ |K_1|, |K_2| \} > 2 \mu n$.

    Now we show that $G[U_1]$ is $\mu$-inseparable.
    Let $\{X_1, X_2\}$ be a partition of $U_1$, with $|X_1| \leq |X_2|$.
    Suppose first that $|X_1 \cap K_1| \leq \mu n$.
    Note that every $v \in U_1$ has at least $2 \mu n$ neighbors in $K_1$ (we proved this for $v \in V_1$, and for $v \in K_1$ it is obvious, because it is a clique on more than $2 \mu n$ vertices).
    Hence, we have that $e(X_1, X_2) \geq |X_1|( 2 \mu n - |X_1 \cap K_1|) \geq |X_1| \mu n \geq \mu |X_1||X_2|$.
    Next, suppose that $|X_1 \cap K_1| > \mu n$.
    Since $|V_1| \leq \mu |U_1|$, this implies that $|X_1 \cap K_1| > |X_1|/2$.
    Since $|X_1| \leq |X_2|$, we have $|X_2| \geq |U_1|/2 \geq 2 \mu |U_1| \geq 2 |V_1|$, so we have $|X_2 \cap K_1| \geq |X_2|/2$.
    Since $K_1$ is a clique, we have $e(X_1, X_2) \geq |X_1 \cap K_1||X_2 \cap K_1| \geq \frac{1}{4}|X_1||X_2| \geq \mu |X_1||X_2|$.
    Hence $G[U_1]$ is $\mu$-inseparable.
    The proof that $G[U_2]$ is $\mu$-inseparable is analogous.
\end{proof}

\subsection{Proto-balancers}

Given a $k$-graph $H$ and a vertex partition $\{A,B\}$ of $V(H)$, an \emph{$(A,B)$-diamond} is an $(x,y)$-diamond for some $x \in A$ and $y \in B$.
We begin our analysis by studying the structure of $H$ assuming that it has a vertex partition without any $(A, B)$-diamonds.
Later, we will transfer this information to an imperfect case, where there are few $(A, B)$-diamonds: this will correspond to the case where there is a $\mu$-separation in the $\gamma$-diamond graph.

The structure we will look at is that of \emph{proto-balancers}, which we define now.
An \emph{$(A, B)$-proto-balancer} in $H$ is a subgraph defined as follows.
There is a $(k-3)$-set $S \subseteq V(H)$ and three $3$-sets $X_1, X_2, X_3 \subseteq V(H) \setminus S$ such that
\begin{enumerate}
    \item $S \cup X_i$ is an edge in $H$, for all $i \in [3]$,
    \item $|X_1 \cap A| = |X_2 \cap A| = |X_1 \cap X_2| = |X_1 \cap X_2 \cap A| = 2$,
    \item $|X_3 \cap A| = 0$,
    \item $|X_1 \cap X_3| = |X_2 \cap X_3| = 1$.
\end{enumerate}
We say that $S$ is the \emph{base} of the $(A, B)$-proto-balancer.
A $(B, A)$-proto-balancer is the same but exchanging the roles of $A$ and $B$; and we just say \emph{proto-balancer} if it is an $(A, B)$-proto-balancer or a $(B, A)$-proto-balancer.
Note that proto-balancers have $k+2$ vertices.
The key property of $(A,B)$-proto-balancers is that, if $e_i = S \cup X_i$ for $i \in [3]$ are the edges of $H$, we have that $|e_1 \cap A| = |e_2 \cap A| = |e_3 \cap A| + 2$. 

Now we show that $k$-graphs with no $(A, B)$-diamonds have proto-balancers.

\begin{lemma}[One proto-balancer without diamonds] \label{lemma:nodiamonds-oneproto}
Let $k \geq 3$ and $n \geq 2k-4$, and let $H$ be an $n$-vertex $k$-graph with $\delta_{k-1}(H) > n/3$, and a vertex partition $\{A,B\}$ with $|A|, |B| > 0$.
Suppose that there are no $(A, B)$-diamonds in $H$.
Then $H$ has a proto-balancer.
\end{lemma}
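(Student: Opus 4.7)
The plan is to argue by contradiction, first reducing to $k = 3$ and then analyzing the forced structure. For the reduction, pick any $(k-3)$-set $T \subseteq V(H)$ such that $A \setminus T$ and $B \setminus T$ are both nonempty, and consider the 3-uniform link $L := L_H(T)$ on $V(H) \setminus T$: it inherits the codegree bound $\delta_2(L) > n/3$ and the absence of $(A, B)$-diamonds, and a proto-balancer in $L$ with empty base lifts to one in $H$ with base $T$. So I may assume $k = 3$.

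The no-$(A, B)$-diamond hypothesis assigns each pair $\{u, v\}$ a label $\sigma(\{u, v\}) \in \{A, B\}$ equal to the class containing $N_H(\{u, v\})$. Supposing no proto-balancer exists, two obstructions must hold: $(\star_1)$ for every $\{a_1, a_2\} \subseteq A$ with $\sigma(\{a_1, a_2\}) = B$ and every $\{b_1, b_2\} \subseteq N_H(\{a_1, a_2\})$, $\sigma(\{b_1, b_2\}) = A$; and $(\star_2)$, the analogous statement with the roles of $A$ and $B$ swapped. Indeed, a violation of $(\star_1)$ would allow one to pick $b_3 \in N_H(\{b_1, b_2\}) \subseteq B$ and form an $(A, B)$-proto-balancer on the edges $\{a_1, a_2, b_1\}, \{a_1, a_2, b_2\}, \{b_1, b_2, b_3\}$; and similarly $(\star_2)$ is what prevents a $(B, A)$-proto-balancer.

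I then split cases by whether $H$ contains a triple inside $A$ or inside $B$. When neither exists, every $A$-pair is $\sigma_B$ and every $B$-pair is $\sigma_A$; partitioning $A \times B$ by the $\sigma$-label of mixed pairs into bipartite graphs $F_A, F_B$, a convexity argument using $N_H(\{a_1, a_2\}) \subseteq N_{F_A}(a_1) \cap N_{F_A}(a_2)$ yields $e(F_A) \geq |A|\, n / 3$, symmetrically $e(F_B) \geq |B|\, n / 3$, so $|A||B| = e(F_A) + e(F_B) \geq n^2/3 > n^2/4 \geq |A||B|$, a contradiction. Otherwise, by symmetry $H$ contains a triple $\{b_1, b_2, b_3\} \subseteq B$, and I introduce the auxiliary graph $\Gamma$ on $B$ whose edges are the $\sigma_B$ $B$-pairs; the codegree bound forces every non-isolated $\Gamma$-vertex to have $\Gamma$-degree at least $n/3$. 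If some $\sigma_B$ $A$-pair exists, $(\star_1)$ makes its $N_H$-neighborhood a $\Gamma$-independent set of size $> n/3$, and combining this with the $\Gamma$-triangle on $\{b_1, b_2, b_3\}$ and the $\Gamma$-minimum degree bound forces $|B| > 2n/3$, hence $|A| < n/3$, hence $\Gamma = \binom{B}{2}$ with $\alpha(\Gamma) = 1$, contradicting the $> n/3$ independent set. If no $\sigma_B$ $A$-pair exists, every $A$-pair is $\sigma_A$ and the codegree condition makes every mixed pair $\sigma_B$; the resulting edges $\{a, b_1, b_2\}$ produce $\sigma_A$ $B$-pairs whose $N_H$-neighborhoods lie in $A$ and have size $> n/3$, so $(\star_2)$ forces sub-pairs of such a neighborhood to be $\sigma_B$, contradicting that every $A$-pair is $\sigma_A$.

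The main technical obstacle is the $\Gamma$-minimum-degree analysis in the case where $H$ contains a triple inside $B$: extracting $|B| > 2n/3$ from $(\star_1)$ together with the $\Gamma$-triangle on $\{b_1, b_2, b_3\}$ uses the codegree threshold $> n/3$ at nearly full strength, and if this step fails one cannot conclude $\Gamma = \binom{B}{2}$ and the proof breaks down.
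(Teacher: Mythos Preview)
Your proof is correct but takes a substantially different and longer route than the paper's. The paper's argument is direct and constructive rather than by contradiction: starting from any $a \in A$ and $b \in B$, it assumes without loss of generality that $N_H(ab) \subseteq A$, picks $a' \in N_H(ab)$, observes that then $N_H(aa') \subseteq B$ (so $|A| < 2n/3$), picks $b' \in N_H(aa') \setminus \{b\}$, and shows that $N_H(bb')$ must be disjoint from $N_H(ab) \cup \{a\}$ (any common vertex would produce an $(A,B)$-diamond with one of the already-found edges). A size count then forbids $N_H(bb') \subseteq A$, so $N_H(bb') \subseteq B$, and any $b'' \in N_H(bb')$ completes the $(A,B)$-proto-balancer $\{a,a',b\},\{a,a',b'\},\{b,b',b''\}$. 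Your approach via the label function $\sigma$, the obstructions $(\star_1),(\star_2)$, and the auxiliary graphs $F_A,F_B,\Gamma$ is more structural and treats the two orientations symmetrically, but at the cost of a three-way case split and a somewhat delicate $\Gamma$-degree analysis in Case~2a to force $|B| > 2n/3$. The paper's approach buys brevity and avoids case analysis; yours makes the global constraints more explicit but is heavier. One small remark: in Case~1 your edge-count bound tacitly needs $|A|,|B|\ge 2$, and in Case~2b you need $|N_H(\{b_1,b_2\})| \ge 2$ to invoke $(\star_2)$; both follow once one notes that $n \le 3$ is impossible under the hypotheses, but this deserves a sentence.
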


In the proof we shall repeatedly use the following observation.
If there are no $(A,B)$-diamonds, then for every $(k-1)$-set $S \subseteq V(H)$ we have that either $N_H(S) \subseteq A$ or $N_H(S) \subseteq B$.
(Otherwise we could find $a \in A, b \in B$ such that $S \cup \{a\}$ and $S \cup \{b\}$ are edges in $H$, but this is an $(a,b)$-diamond.)

\begin{proof}[Proof of \cref{lemma:nodiamonds-oneproto}]
Suppose first that $k=3$.
Let $a \in A$, $b \in B$.
By the observation, we have $N_H(ab) \subseteq A$ or $N_H(ab) \subseteq B$.
Without loss of generality (otherwise we swap $A$ and $B$), we suppose the former holds.
Now, let $a' \in N_H(ab) \cap A$.
Since $b \in N_H(aa')$, again by the observation, we must have $N_H(aa') \subseteq B$.
Hence, $|B| > n/3$ and $|A| < 2n/3$.

    Now, let $b' \in N_H(aa') \cap B$ be distinct from $b$.
    Note that $N_H(bb')\cap (N_H(ab) \cup \{a\})=\emptyset$, otherwise it would create a $(A,B)$-diamond.
    Since $|A| < 2n/3$, $|N_H(ab)|, |N_H(bb')| > n/3$ and $N_H(ab') \subseteq A$, then we cannot have $N_H(bb') \subseteq A$.
    By the observation, then it must happen that $N_H(bb') \subseteq B$.
    Let $b'' \in N_H(bb') \cap B$.
    Then we found an $(A,B)$-proto-balancer with $S = \emptyset$, $X_1 = \{a, a', b\}$, $X_2 =\{a,a', b'\}$, $X_3 = \{b, b', b''\}$.

    Finally, if $k > 3$, let $S \subseteq V(H)$ be an arbitrary $(k-3)$-set such that $A \setminus S \neq \emptyset$ and $B \setminus S \neq \emptyset$, and apply the argument above in the $3$-uniform link graph $L_H(S)$.
    This gives an $(A,B)$-proto-balancer with base $S$.
\end{proof}

We leverage the above result to find many proto-balancers if there are a few diamonds.

\begin{lemma}[Many proto-balancers with few diamonds] \label{lemma:fewdiamond-manyproto}
    Let $1/k, \eps \gg \gamma \geq \mu \gg 1/n$ and $\eps \gg \rho$.
    Let $H$ be an $n$-vertex $k$-graph with $\delta_{k-1}(H) \geq (1/3 + \eps)n$, and let $G$ be its $\gamma$-diamond graph.
    Suppose $G$ has a $\mu$-separation $\{A, B\}$.
    Then there are at least $\rho n^{k+2}$ proto-balancers in $H$.
\end{lemma}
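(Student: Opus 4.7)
My plan is to reduce to a $3$-uniform supersaturation statement via random $(k-3)$-links, and then prove a quantitative counting version of \cref{lemma:nodiamonds-oneproto}. As a preliminary observation, \cref{lemma:diamondgraph-mindeg} yields $\delta(G) \geq n/10$, which combined with $e_G(A, B) \leq \mu|A||B|$ forces $\min\{|A|, |B|\} \geq n/20$ (by summing degrees over $A$), so both parts are linear in $n$. Next, I would bound the $(A, B)$-diamond count in $H$: each pair $(x, y) \in A \times B$ contributes at most $\binom{n}{k-1}$ $(x, y)$-diamonds when $xy \in E(G)$ and fewer than $\gamma\binom{n}{k-1}$ otherwise, so the total is at most $(\mu + \gamma)|A||B|\binom{n}{k-1} = O(\gamma n^{k+1})$.

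Every $(A, B)$-diamond in $H$ with common $(k-1)$-set $T$ induces an $(A, B)$-diamond in each of the $\binom{k-1}{k-3}$ link $3$-graphs $L_H(S)$ with $S \subseteq T$, so summing and applying Markov's inequality shows that at least half of the $(k-3)$-sets $S$ are \emph{good}: the link $L_H(S)$ is a $3$-graph on $N := n - k + 3$ vertices with $\delta_2(L_H(S)) \geq (1/3+\eps)n$, $|A \setminus S|, |B \setminus S| \geq N/30$, and at most $\tau N^4$ $(A, B)$-diamonds for $\tau := C\gamma$ with $C = C(k)$. I would then establish the following supersaturation analogue of \cref{lemma:nodiamonds-oneproto}: any $3$-graph $L$ satisfying these properties (with $\tau, 1/N \ll \rho' \ll \eps$) contains at least $\rho' N^5$ proto-balancers. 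Summing this lower bound over the at least $\frac{1}{2}\binom{n}{k-3}$ good sets $S$, where each proto-balancer in $L_H(S)$ extends via $S$ to a distinct proto-balancer of $H$ with base $S$, produces $\Omega(n^{k+2}) \geq \rho n^{k+2}$ proto-balancers for $\rho$ sufficiently small.

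For the supersaturation step, classify each pair $uv \subseteq V(L)$ as $A$-dominant (when $|N_L(uv) \cap B| < \eta N$), $B$-dominant (when $|N_L(uv) \cap A| < \eta N$), or \emph{mixed} otherwise, for some $\tau^{1/2} \ll \eta \ll \eps$. Each mixed pair contributes more than $\eta^2 N^2$ to the diamond count, so the number of mixed pairs is at most $\tau N^4/(\eta^2 N^2) \ll N^2$. Then mirror the existential proof: pick a pure pair $(a, b) \in A \times B$ and assume by symmetry that $ab$ is $A$-dominant; the same diamond-counting that forced $N_L(aa') \subseteq B$ in \cref{lemma:nodiamonds-oneproto} now shows that $aa'$ is $B$-dominant for all but $o(N)$ choices of $a' \in N_L(ab) \cap A$, and similarly for $bb'$. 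Each of the five labeling steps for the tuple $(a, a', b, b', b'')$ then admits $\Omega(N)$ valid choices, yielding $\Omega(N^5)$ proto-balancers. The hard part will be this supersaturation step: one must verify carefully that the diamond-counting used in \cref{lemma:nodiamonds-oneproto} still forces the intermediate pairs $aa'$ and $bb'$ to be dominant on the expected side up to $o(N)$ exceptions, possibly with a case analysis on which of the four edge types AAA, AAB, ABB, BBB is prevalent, to make the greedy counting robust to the global distribution of edges between $A$ and $B$.
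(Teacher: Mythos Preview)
Your approach is valid in outline but takes a genuinely different route from the paper. The paper does not reduce to a $3$-uniform supersaturation statement at all. Instead, it samples a uniformly random $q$-subset $S$ of $V(H)$ for a large constant $q$ with $1/k,\eps \gg 1/q \gg \gamma,\rho$, and shows via concentration (\cref{lemma:GIKM}) and Markov that with probability at least $1/2$ the induced subgraph $H[S]$ simultaneously has $|A\cap S|,|B\cap S|>0$, $\delta_{k-1}(H[S])>q/3$, and \emph{zero} $(A,B)$-diamonds (the latter because the expected number of such diamonds in $H[S]$ is $O(\gamma\binom{q}{k+1})<1/8$). For each such $S$, \cref{lemma:nodiamonds-oneproto} applies as a black box and yields one proto-balancer in $H[S]$; double-counting over $q$-sets then gives the bound $\rho n^{k+2}$.

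The advantage of the paper's method is that it bypasses your supersaturation step entirely: by passing to a constant-size subgraph with no $(A,B)$-diamonds, it can invoke the existential lemma verbatim rather than reprove it with error terms. Your route through $(k-3)$-links plus a quantitative $3$-uniform counting lemma should also work, but the step you flag as ``the hard part'' is genuinely delicate: knowing that $b\in N_L(aa')$ only shows $|N_L(aa')\cap B|\geq 1$, which does not by itself force $aa'$ to be $B$-dominant, and bounding the bad triples $(a,a',b)$ requires summing the diamond contributions more carefully (as you note, a case analysis on the global edge profile may be needed). The paper's sampling trick sidesteps all of this.
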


To prove this, we will use the following concentration inequality~\cite[Corollary 2.2]{Greenhill-Isaev-Kwan-McKay-2017}.

\begin{lemma} \label{lemma:GIKM}
    Let $X$ be a set of size $n$, and let $q < n/2$.
    Let $f$ be a function over the sets of size $q$ of $X$ such that $|f(S) - f(S')| \leq z$ if $|S \cap S'| = q-1$.
    If $S$ is a $q$-subset of $X$ chosen uniformly at random, then $\probability[ |f(S) - \expectation[f(S)] | \geq t ] \leq 2e^{- 2 t^2 / ( q z^2)}$.
\end{lemma}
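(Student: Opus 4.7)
The plan is to derive this concentration inequality via the classical method of bounded differences, constructing a Doob martingale from a uniformly random permutation and applying the Azuma-Hoeffding inequality. This is the standard route for McDiarmid-type bounds on functions of subsets chosen without replacement; it cleanly handles the dependence created by the fixed-size constraint $|S|=q$, which would otherwise block a direct application of McDiarmid's inequality for independent coordinates.

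First, I would realize the uniform random $q$-subset $S$ as the image of the first $q$ coordinates of a uniformly random permutation $\pi=(\pi_1,\dots,\pi_n)$ of $X$, setting $S=\{\pi_1,\dots,\pi_q\}$; this marginalises to the correct distribution. Then define the Doob martingale $M_i := \expectation[f(S)\mid \pi_1,\dots,\pi_i]$ for $0\le i\le q$, so that $M_0=\expectation[f(S)]$ and $M_q=f(S)$. Provided one can uniformly bound the conditional range (addressed below), Azuma-Hoeffding applied to this $q$-step martingale yields exactly $\probability[|f(S)-\expectation[f(S)]|\ge t]\le 2\exp(-2t^2/(qz^2))$.

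The heart of the argument is therefore to show that for each $i$, the function $g_i(a):=\expectation[f(S)\mid \pi_1,\dots,\pi_{i-1},\pi_i=a]$ has range at most $z$ as $a$ varies over $X\setminus\{\pi_1,\dots,\pi_{i-1}\}$. I would establish this by a swap coupling: given two candidate values $a,b$ for $\pi_i$, couple a uniform completion $(\pi_{i+1},\dots,\pi_n)$ under $\pi_i=a$ with a completion under $\pi_i=b$ by locating the unique index $j>i$ at which $b$ appears in the first completion, and swapping the entries $a$ and $b$. This is a bijection between the two conditional distributions. A short case analysis on whether $j\le q$ shows that the induced $q$-subsets either coincide (when $j\le q$, since the same $q$ elements occupy the first $q$ positions) or differ by exactly replacing $a$ with $b$ (when $j>q$). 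In both cases the Lipschitz hypothesis gives $|f(S_a)-f(S_b)|\le z$ pointwise under the coupling, and averaging yields $|g_i(a)-g_i(b)|\le z$. Since $M_i=g_i(\pi_i)$ and $M_{i-1}=\expectation[g_i(\pi_i)\mid \pi_1,\dots,\pi_{i-1}]$, the increment $M_i-M_{i-1}$ lies in an interval of length at most $z$, which is the precise input required by the version of Azuma-Hoeffding producing the exponent $-2t^2/(qz^2)$.

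The only mildly delicate point I expect is the verification of the swap bijection and the accompanying subset analysis; once that is in place, the Azuma-Hoeffding application is entirely routine. The hypothesis $q<n/2$ plays no essential role in this proof: the statement is symmetric under the complementation $S\mapsto X\setminus S$, so one could always reduce to the case $q<n/2$ by noting that $f(X\setminus S)$ satisfies the same Lipschitz bound.
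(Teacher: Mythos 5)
Your proof is correct, but note that the paper does not prove this lemma at all: it is imported verbatim as Corollary~2.2 of Greenhill, Isaev, Kwan and McKay, so there is no in-paper argument to compare against. What you supply is the standard self-contained derivation for sampling without replacement: realize the uniform $q$-subset as the first $q$ entries of a random permutation, form the Doob martingale $M_i=\expectation[f(S)\mid \pi_1,\dots,\pi_i]$, and control each increment by the swap coupling. Your case analysis on the position $j$ of the swapped element is exactly right (if $j\le q$ the two subsets coincide, if $j>q$ they differ in one element, so the Lipschitz hypothesis applies), and since conditionally $M_i=g_i(\pi_i)$ ranges over a set of diameter at most $z$ containing its conditional mean, each increment lies in an interval of length $z$; the two-sided Azuma--Hoeffding bound with these interval lengths gives precisely $2\exp(-2t^2/(qz^2))$. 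Your closing observation is also accurate: the hypothesis $q<n/2$ is not needed for this argument (and the complementation reduction you sketch would even yield a stronger exponent when $q\ge n/2$); it is simply part of the statement as quoted from the cited source. So relative to the paper, your route trades a black-box citation for a short, elementary, and complete proof, which is a perfectly legitimate (and arguably more transparent) alternative.
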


\begin{proof}[Proof of \cref{lemma:fewdiamond-manyproto}]
    Let $q$ be a large integer so that $1/k, \eps \gg 1/q \gg \gamma, \rho$.
    Let $S$ be a $q$-set of vertices drawn uniformly at random.
    We gather some probabilistic estimates.

    \begin{enumerate}
        \item $\probability[ |A \cap S| = 0 ]\leq 1/8$ and $\probability[ |B \cap S| = 0 ] \leq 1/8$.
    \end{enumerate}

    We prove the statement for $A$, and the proof of $B$ is identical.
    By \cref{lemma:diamondgraph-mindeg} we have that $\delta(G) \geq n/10$.
    Since $\{A, B\}$ is a $\mu$-separation, there must exist $a \in A$ with less than $\mu |B| \leq \mu n$ neighbors in $B$, hence $|A| \geq \deg_G(a, A) \geq \delta(G) - \mu n \geq n/20$.
   Note that $|S \cap A|$ is hypergeometric with $\expectation[|S \cap A|] = q|A|/n \geq q/20$.
    Changing one vertex changes $|S \cap A|$ by at most one.
    Hence, we can apply \cref{lemma:GIKM} with $z = 1$, $t = q/20$ to obtain $\probability[|S \cap A| = 0] \leq 2 \exp( - q / 200) < 1/8$, where the last inequality follows from $1 \gg 1/q$.

    \begin{enumerate}[resume]
        \item $\probability[\:\text{$H[S]$ contains an $(A,B)$-diamond }] < 1/8$.
    \end{enumerate}

    Every pair $ab \notin E(G)$ with $a \in A$, $b \in B$ yields at most $\gamma \binom{n}{k-1}$ many $(a,b)$-diamonds, and the pairs in $E(G)$ can yield at most $\binom{n}{k-1}$ many $(a,b)$-diamonds.
    Hence, the total number of $(A,B)$-diamonds is at most $\gamma \binom{n}{k-1} |A||B| + \binom{n}{k-1} \mu |A||B| \leq 2 \gamma \binom{n}{k-1} n^2/4 \leq (k+1)^2 \gamma \binom{n}{k+1}$.
    Hence, the number $X$ of such diamonds in $S$ has expected value at most $(k+1)^2 \gamma \binom{q}{k-1} < 1/8$, where we use $1/k, 1/q \gg \gamma$ in the last inequality.
    By Markov's inequality, $\probability[X \geq 1] \leq \expectation[X] < 1/8$, as required.

    \begin{enumerate}[resume]
        \item $\probability[ \:\delta_{k-1}(H[S]) \leq  q/3 \: ] \leq \binom{q}{k-1} 2 e^{- \eps^2 q / 2} < 1/8$.
    \end{enumerate}

    The first inequality also follows from \cref{lemma:GIKM}, together with a union bound (see, e.g., \cite[Lemma 3.4]{Ferber-Kwan-2022} for a similar argument).
    The second inequality follows from the choice $1/k, \eps \gg 1/q$.

    Putting together the three facts above, we have that there are at least $\binom{n}{q}/2$ many $q$-sets $S$ such that $|A \cap S|, |B \cap S| > 0$, $H[S]$ has no $(A,B)$-diamond, and $\delta_{k-1}(H[S]) > q/3$.
    For each of these sets, \cref{lemma:nodiamonds-oneproto} implies that $H[S]$ contains a proto-balancer.
    Since each proto-balancer has $k+2$ vertices, each proto-balancer obtained in this way can appear in at most $\binom{n-k-2}{q-k-2}$ such sets $S$.
    Hence, the number of distinct proto-balancers is at least 
    \[ \frac{\frac{1}{2} \binom{n}{q}}{\binom{n-k-2}{q-k-2}} = \frac{1}{2 \binom{q}{k+2}} \binom{n}{k+2} \geq \rho n^{k+2}, \]
    where the last inequality is by the choice $1/k, 1/q \gg \rho$.
\end{proof}

\subsection{Balancers} \label{ssection:balancers}
We now describe \emph{balancers}, which are auxiliary gadgets found in a host $k$-graph $H$ chosen with respect to a partition $\{A,B\}$ of $V(H)$.
For convenience, we call the $k$-expansion of $K_{2, t}$ an \emph{uncolored $t$-balancer}, denoted by $J$, with anchor vertex classes $\{x_1, x_2\}$ and $\{y_1, \dotsc, y_t\}$, and for each $\ell \in [2]$ and $i \in [t]$,  let $f_{\ell, i} \in E(J)$ be the edge containing $x_{\ell}$ and $y_i$.
For each $\ell \in [2]$, 
let $J_\ell\subseteq J$ be the subgraph consisting of the edges $f_{\ell,i}$ for all $i \in [t]$. Note that both $J_1$ and $J_2$ are $t$-stars and $|V(J)\setminus V(J_1)|=|V(J)\setminus V(J_2)|$. Hence, we say that $J_1, J_2$ are the \emph{halves} of $J$. 

Now we will define appropriate vertex partitions of uncolored balancers.
We will abuse the notation here and pretend the partition is $\{A,B\}$ for all the relevant $k$-graphs, even if they have different vertex sets, but this should not create confusion.
Given an uncolored $t$-balancer $J$, and positive integers $t_A, t_B$ such that $t_A + t_B = t$, and a vertex partition $\{A,B\}$ of $V(J)$, we will say that $J$ is a \emph{$(t_A, t_B)$-balancer} if, by setting $I_A = [t_A]$ and $I_B = [t] \setminus [t_A]$, we have
\begin{enumerate}
    \item $x_1 \in A$, $x_2 \in B$,
    \item $y_i \in A$ for each $i \in I_A$ and $y_j \in B$ for each $j \in I_B$,
    \item for each $i \in I_A$, we have $|(f_{1,i}\setminus\{x_1,y_i\})\cap B| = |(f_{2,i}\setminus\{x_2,y_i\}) \cap B|+1$, and
    \item for each $j \in I_B$, we have $|(f_{1,j}\setminus\{x_1,y_j\})\cap B| = |(f_{2,j}\setminus\{x_2,y_j\}) \cap B|-1$.
\end{enumerate}
In an embedding, we would use either the edges of $J_1$ or the edges of $J_2$ for a $(t_A, t_B)$-balancer $J$.
Note that we have
\[|V(J_1)\cap B|=t_B+\sum_{i\in I_A}|(f_{1,i}\setminus\{x_1,y_i\})\cap B|+\sum_{j\in I_B}|(f_{1,j}\setminus\{x_1,y_j\})\cap B| \]
and 
\[|V(J_2)\cap B|=1+t_B+\sum_{i\in I_A}(|(f_{1,i}\setminus\{x_1,y_i\})\cap B|-1) +\sum_{j\in I_B}(|(f_{1,j}\setminus\{x_1,y_j\})\cap B|+1).\]
By switching from the edges of $J_1$ to the edges of $J_2$, we precisely gain $1 + t_B - t_A$ vertices in $B$ (this number can be negative, in which case we lose this number of vertices in $B$).
Say that $1 + t_B - t_A$ is the \emph{capacity} of a $(t_A, t_B)$-balancer.
If $t$ is even, by choosing $t_A = t_B = t/2$, we have that a $(t_A, t_B)$-balancer has capacity~$1$.
If $t$ is odd, by choosing $t_A - 1 = t_B = \lceil t/2 \rceil$, we have that a $(t_A, t_B)$-balancer has capacity $2$.

Now we wish to show that if the $\gamma$-diamond graph of $H$ has a separation, then there are many $(t_A, t_B)$-balancers.
For this, we will use the fact that there is a color-preserving homomorphism from any $(t_A, t_B)$-balancer to any $(A, B)$-proto-balancer, i.e., it maps vertices from $A$ to vertices in $A$, and vice versa.
This will allow us to conclude using supersaturation.

\begin{lemma}[Balancers from few diamonds] \label{lemma:balancers}
    Let $k \geq 3$, let $t_A, t_B \geq 1$ be integers, and $1/k, \eps \gg \gamma \geq \mu \gg 1/n$ and $\eps,  1/t_A, 1/t_B\gg \rho$.
    Let $H$ be an $n$-vertex $k$-graph with $\delta_{k-1}(H) \geq (1/3 + \eps)n$, and let $G$ be its $\gamma$-diamond graph.
    Suppose $G$ has a $\mu$-separation $\{A, B\}$.
    Then there are at least $\rho n^{(2k-3)(t_A + t_B)+2}$ many $(t_A, t_B)$-balancers in $H$.
\end{lemma}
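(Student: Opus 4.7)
The plan is to mirror the proof of \cref{lemma:fewdiamond-manyproto}. I sample a uniformly random $q$-subset $S \subseteq V(H)$ for a large constant $q$ chosen so that $1/k, \eps, 1/t_A, 1/t_B \gg 1/q \gg \gamma, \mu, \rho$, and use the concentration inequality \cref{lemma:GIKM} to show that, with probability at least $1/2$, the induced sub-$k$-graph $H[S]$ satisfies (i) $|A \cap S|, |B \cap S| \geq q/20$, (ii) $H[S]$ contains no $(A, B)$-diamond (using that $H$ has only $O(\gamma n^{k+1})$ such diamonds thanks to the $\mu$-separation), and (iii) $\delta_{k-1}(H[S]) > q/3$. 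The estimates are identical in form to the three estimates used in the proof of \cref{lemma:fewdiamond-manyproto}.

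The central step is then to prove the following analogue of \cref{lemma:nodiamonds-oneproto}: every $k$-graph $H'$ on $q$ vertices satisfying (i)--(iii) above with respect to some partition $\{A,B\}$ of $V(H')$ contains a $(t_A, t_B)$-balancer. The absence of $(A, B)$-diamonds in $H'$ yields a rigid structure, namely a well-defined map $\sigma \colon \binom{V(H')}{k-1} \to \{A, B\}$ with $N_{H'}(W) \subseteq \sigma(W)$ for every $(k-1)$-set $W$. Fix anchors $x_1 \in A \cap V(H')$ and $x_2 \in B \cap V(H')$, and build the balancer greedily: for each $i = 1, \dotsc, t_A + t_B$ pick a fresh $y_i$ in the correct class, and then choose edges $f_{1,i} \ni x_1, y_i$ and $f_{2,i} \ni x_2, y_i$ whose expansion vertices realize the required $\pm 1$ difference in $B$-count. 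The codegree bound $\delta_{k-1}(H') > q/3$ provides many extensions at each step, the rigidity fixes the colour of each expansion vertex in terms of $\sigma$ applied to its containing $(k-1)$-set, and vertex-disjointness across the $2(t_A+t_B)$ edges costs only $O(kt)$ vertices, which is negligible compared to $q$. Where the greedy choice appears forced, the local proto-balancer structure of \cref{lemma:nodiamonds-oneproto} is invoked to witness the necessary parity shifts inside $H'$.

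Given this existence statement, the counting concludes the proof exactly as in \cref{lemma:fewdiamond-manyproto}: there are at least $\binom{n}{q}/2$ good $q$-sets, each giving rise to a $(t_A, t_B)$-balancer on $s := 2 + (2k-3)(t_A + t_B)$ vertices, and each balancer is contained in at most $\binom{n-s}{q-s}$ many $q$-sets, so dividing yields
\[
\frac{\binom{n}{q}/2}{\binom{n-s}{q-s}} \;=\; \frac{1}{2\binom{q}{s}} \binom{n}{s} \;\geq\; \rho\, n^s
\]
distinct balancers, using $1/q \gg \rho$. The main obstacle is the single-balancer existence statement: the parity conditions $s_{1,i} - s_{2,i} = \pm 1$ place strong constraints on the available edges in the rigid setting, and I will need to argue that the rigid link colouring $\sigma$ combined with the $1/3$-codegree assumption is flexible enough to accommodate each pair $(f_{1,i}, f_{2,i})$ simultaneously without conflicts; this is precisely where the proto-balancer lemma feeds in locally to provide edges realising the required shifts in $|e \cap A|$.
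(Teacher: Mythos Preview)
Your overall counting framework (sample a random $q$-set, show a balancer exists in $H[S]$, then double-count) is sound and parallels the proof of \cref{lemma:fewdiamond-manyproto}. The gap is in your existence step. You assert that one can build a $(t_A,t_B)$-balancer greedily in $H[S]$ once the rigid colouring $\sigma$ is in hand, but you do not actually carry this out, and it is not clear that it goes through. The difficulty is that a balancer has $2t$ edges which must all share the fixed anchors $x_1,x_2$ while being pairwise vertex-disjoint on their expansion vertices \emph{and} realising the prescribed $\pm 1$ shift in $B$-count. A single proto-balancer in $H[S]$ witnesses one such shift with one particular base $S_0$; it does not obviously give $2t$ vertex-disjoint edges with the required colour profiles anchored at a common $x_1,x_2$. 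Your phrase ``where the greedy choice appears forced, the local proto-balancer structure is invoked'' is exactly the point where a real argument is needed and none is given.

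The paper sidesteps this entirely. It does not attempt a direct greedy construction. Instead it observes that there is a colour-preserving homomorphism from any $(t_A,t_B)$-balancer onto any $(A,B)$-proto-balancer, so every $2t$-blow-up of a proto-balancer contains a $(t_A,t_B)$-balancer. Then, rather than sampling a $q$-set, it applies \cref{lemma:fewdiamond-manyproto} once to get $\rho' n^{k+2}$ proto-balancers in $H$, forms the auxiliary $(k+2)$-graph $F$ whose edges are the proto-balancers, and invokes supersaturation (\cref{supersa}) to count $\rho n^{2t(k+2)}$ copies of the $2t$-blow-up of a $(k+2)$-edge in $F$. Each such blow-up yields a balancer, and $2t(k+2) \geq (2k-3)t+2$ gives the claimed count. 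This homomorphism-plus-supersaturation route is the key idea you are missing: it converts the delicate problem of assembling many coordinated edges into a single small-structure count followed by a black-box blow-up.
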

 
\begin{proof}
    Let $k, t_A, t_B, \eps$ be given, we choose $\gamma, \mu, \rho', \rho$ so that $1/k, \eps \gg \gamma \geq \mu \gg 1/n$ and $\eps,  1/t_A, 1/t_B \gg\rho'\gg \rho$. Applying~\cref{lemma:fewdiamond-manyproto} with $\rho'$, there are at least $\rho'n^{k+2}$ proto-balancers in $H$. By the pigeonhole principle, either there are at least $\frac{\rho'}{2}n^{k+2}$ $(A, B)$-proto-balancers in $H$ or at least $\frac{\rho'}{2}n^{k+2}$ $(B, A)$-proto-balancers in $H$.  Without loss of generality, assume that there are at least $\frac{\rho'}{2}n^{k+2}$ $(A, B)$-proto-balancers in $H$. 

    Consider any $(A,B)$-proto-balancer $P$ with base $S$ and edge set $\{S\cup \{a_1,a_2,b_1\}, S\cup\{a_1,a_2,b_2\}, S\cup\{b_1,b_2,b_3\}\}$, where $\{a_1, a_2\}\subset A$ and $\{b_1, b_2, b_3\}\subset B$.
    Let $\ell = |S \cap B|$ and note that $\ell \in \{0, \dotsc, k-3\}$. Fix $t=t_A+t_B$.
    Let $J$ be an uncolored $t$-balancer with anchor vertex classes $\{x_1, x_2\}$ and $\{y_1, \dotsc, y_t\}$, and edge set $E(J)=\{f_{1, i}, f_{2, i}\}_{i \in [t]}$. 

    Let $I_A = [t_A]$ and $I_B = [t] \setminus [t_A]$.
    For each $i \in [t]$, select a vertex $u_i \in f_{1, i}\setminus \{x_1, y_i\}$ and $v_i \in f_{2, i}\setminus \{x_2, y_i\}$.
    We first show that there is a homomorphism $\Phi: J \to P$ by setting 
    \[\Phi(v) = 
    \begin{cases} 
    a_1 & \text{ if } v =x_1; \text{ or } v = v_i \text{ for } i \in I_A, \\
    a_2 & \text{ if } v = y_i \text{ for } i \in I_A; \text{ or } v = u_j \text{ for } j \in I_B, \\
    b_1 & \text{ if } v = x_2; \text{ or } v = u_i \text{ for } i \in I_A, \\
    b_2 & \text{ if } v = y_j \text{ for } j \in I_B, \\
    b_3 & \text{ if } v = v_j \text{ for } j \in I_B,
\end{cases}\]
and $\Phi(f_{1,i}\setminus \{x_1, u_i, y_i\})= \Phi(f_{2,i}\setminus \{x_2, v_i, y_i\})=S$ for each $i \in [t]$.
    We define a partition $\{A, B\}$ on $V(J)$ where $x \in A$ if and only if $\Phi(x) \in A$.
    This partition makes $J$ a $(t_A, t_B)$-balancer.
    Indeed, by $\Phi$ we have
      $|(f_{1,i}\setminus\{x_1,y_i\})\cap B| =\ell+1$ and $|(f_{2,i}\setminus\{x_2,y_i\}) \cap B| =\ell$ for $i\in I_A$; for $j\in I_B$, we have $|(f_{1,j}\setminus\{x_1,y_j\})\cap B|=\ell$ and $|(f_{2,j}\setminus\{x_2,y_j\}) \cap B|=\ell+1$. 
Note that in the homomorphism $\Phi$, each vertex $x \in V(P)$ corresponds to at most $2t$ pre-images in $V(J)$.
    Thus, for the $2t$-blow-up $P(2t)$ of $P$, it's easy to see that there is an embedding $\phi: J \to P(2t)$.
    This means that we can find a $(t_A,t_B)$-balancer in a $P(2t)$.

    Now we show that there are at least $\frac{\rho'}{2}n^{2t(k+2)}$ many $P(2t)$ in $H$. Consider a $(k+2)$-graph $F$ on vertex set $V(H)$ and edge set 
    \[
    E(F):=\left\{e\in \binom{V(H)}{k+2}: H[e] \text{ contains an } (A, B) \text{-proto-balancer} \right\}.
    \]  
    Clearly, $|E(F)|\geq \frac{\rho'}{2}n^{k+2}$. By \cref{supersa}, there exists $\rho>0$ such that $F$ contains at least $\rho n^{2t(k+2)}$ copies of $K^{(k+2)}_{k+2}(2t)$, where $K^{(k+2)}_{k+2}(2t)$ denotes the $2t$-blow-up of an edge of $F$. This means that there are at least $\rho n^{2t(k+2)}$ many $P(2t)$ in $H$. Since $2t(k+2)=2(t_A+t_B)(k+2)\geq (2k-3)(t_A+t_B)+2$, we know that there are at least $\rho n^{(2k-3)(t_A + t_B)+2}$ many $(t_A, t_B)$-balancers in $H$. 
\end{proof}

\subsection{Diamond-chains} \label{ssection:vertexgadgets} 
Given vertices $u,v$, a \emph{$(u, v, \ell)$-diamond-chain} is given by a sequence of distinct vertices $x_0, \dotsc, x_\ell$ with $x_0 = u$, $x_\ell = v$, and a collection of $\ell$ diamonds $D_1, \dotsc, D_\ell$ such that for each $1 \leq i \leq \ell$, $D_i$ is an $(x_{i-1}, x_i)$-diamond, and all diamonds are vertex-disjoint except if they are consecutive $D_{i}, D_{i+1}$, where they intersect only in~$x_i$. 
Given a $(u,v, \ell)$-diamond-chain formed by the edge set $\mathcal S=\{e_i, e'_i\}_{i\in [\ell]}$ with $u\in e_1$, $v\in e'_{\ell}$ and $|e_i\cap e'_i|=k-1$ for $i\in [\ell]$, let $\mathcal S_u \subseteq \mathcal S$ be the subgraph consisting of $e_1, \dots, e_\ell$ and  $\mathcal S_v =  \mathcal S \setminus \mathcal S_u$ be the subgraph consisting of $e'_1, \dots, e'_\ell$.
We say that $\mathcal S_u, \mathcal S_v$ are the \emph{halves} of the $(u, v, \ell)$-diamond-chain. 

\begin{lemma}[Diamond-chains from inseparability] \label{lemma:diamond-chains}
    Let $1/k, \eps \gg \gamma \geq \mu \gg 1/\ell_0 \gg \rho \gg 1/n$.
    Let $H$ be an $n$-vertex $k$-graph and let $G$ be its $\gamma$-diamond graph.
    Suppose $G' \subseteq G$ is a $\mu$-inseparable subgraph and let $u, v \in V(G')$ be distinct.
    Then there exists $1 \leq \ell \leq \ell_0$ such that there are at least $\rho n^{k\ell - 1}$ many $(u, v, \ell)$-diamond-chains in $H$.
\end{lemma}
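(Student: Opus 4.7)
The plan is to combine the $\mu$-inseparability of $G'$, which should yield many short $(u,v)$-walks in $G'$, with the diamond count per edge afforded by $G$, producing many $(u,v,\ell)$-diamond-chains by stitching diamonds along each walk. First I would show that BFS in $G'$ is highly expansive; second, count $(u,v)$-walks of some length $\ell \leq \ell_0$ in $G'$; and third, convert each walk into many vertex-disjoint diamond-chains in $H$.

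For the first step, let $B_i = B_i(u)$ denote the ball of radius $i$ around $u$ in $G'$, and set $n' := |V(G')|$. The $\mu$-inseparability of $G'$ yields $e_{G'}(B_i, V(G') \setminus B_i) \geq \mu |B_i|(n' - |B_i|)$ whenever $\emptyset \neq B_i \neq V(G')$. Since every $w \in V(G') \setminus B_i$ satisfies $\deg_{G'}(w, B_i) \leq |B_i|$, I obtain $|B_{i+1} \setminus B_i| \geq \mu(n' - |B_i|)$, hence $n' - |B_i| \leq (1-\mu)^i n'$. With $\ell_1 := \lceil \log(4\ell_0)/\mu \rceil$ (at most $\ell_0/2$ by the hierarchy $\mu \gg 1/\ell_0$), both $B_{\ell_1}(u)$ and $B_{\ell_1}(v)$ omit at most $n'/(4\ell_0)$ vertices of $V(G')$. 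Writing $W_i(x,y)$ for the number of $(x,y)$-walks of length $i$ in $G'$, the minimum-degree bound $\delta(G') \geq \mu(n'-1)$ gives $\sum_w W_i(x, w) \geq (\mu n')^i$ for every $x$; the middle-vertex decomposition $W_{2\ell_1}(u,v) = \sum_w W_{\ell_1}(u,w) W_{\ell_1}(w,v)$, combined with the BFS expansion above and the possibility of padding any shortest $u$-to-$w$ path by back-and-forth excursions (each offering $\geq \mu n'$ choices), should show this sum is at least $\rho'(n')^{2\ell_1 - 1}$ for some $\rho' = \rho'(\mu, \ell_0)$. By pigeonhole over the allowed lengths there is a specific $\ell \leq 2\ell_1 \leq \ell_0$ with $W_\ell(u,v) \geq \rho' (n')^{\ell-1}/(2\ell_1)$.

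Finally, for each such walk $u = w_0, w_1, \ldots, w_\ell = v$ in $G' \subseteq G$, every edge $w_{i-1}w_i$ has at least $\gamma \binom{n}{k-1} \geq \gamma n^{k-1}/k!$ many $(w_{i-1}, w_i)$-diamonds in $H$. Selecting diamonds sequentially, at step $i$ we must avoid at most $O(\ell_0 k)$ previously-used vertices, losing at most half the diamonds (since $n$ is huge), so multiplying yields at least $\rho'(n')^{\ell-1}/(2\ell_1) \cdot (\gamma n^{k-1}/(2k!))^\ell \geq \rho n^{k\ell - 1}$ many $(u,v,\ell)$-diamond-chains, as required. The main obstacle is obtaining the $\Omega((n')^{\ell-1})$ walk count in the middle step: a diameter bound alone yields only one $(u,v)$-walk, whereas we genuinely need polynomially many in order to beat the target $\rho n^{k\ell - 1}$. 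Controlling how walks distribute across endpoints in a $\mu$-inseparable graph (whether by the BFS / excursion-padding argument sketched above or alternatively via a Cheeger-type spectral-gap estimate, which gives $\lambda_2 \leq \lambda_1(1 - \Omega(\mu^2))$) is the technical crux.
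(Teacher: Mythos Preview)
Your overall strategy coincides with the paper's: obtain many short $(u,v)$-paths in $G'$ from $\mu$-inseparability, then replace each edge of each path by one of the $\gamma\binom{n}{k-1}$ available diamonds to assemble diamond-chains. Your third paragraph is essentially the paper's conversion step verbatim.

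The difference is in how the path count is obtained. The paper does not argue this from scratch; it quotes a lemma of Ebsen, Maesaka, Reiher, Schacht and Sch\"ulke stating that in any $\mu$-inseparable graph on $n'$ vertices and for any pair $u,v$, there exists $\ell\le\ell_0$ with at least $c(n')^{\ell-1}$ many $(u,v)$-paths of length~$\ell$. This is exactly the step you flag as ``the technical crux'', and the paper treats it as a black box.

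Your attempt to prove this directly has two loose ends. First, you count walks, but a $(u,v,\ell)$-diamond-chain requires the link vertices $x_0,\dots,x_\ell$ to be distinct, so walks with repeated vertices do not yield valid chains; this is harmless (there are only $O_{\ell}((n')^{\ell-2})$ such walks) but should be said. Second, and more substantively, your ``padding by back-and-forth excursions'' to promote a shortest $(u,w)$-path of length $d$ to a walk of length exactly $\ell_1$ changes the length only by even amounts, so it fails when $d\not\equiv\ell_1\pmod 2$. Since $\mu$-inseparable graphs can be bipartite (complete balanced bipartite graphs are $\mu$-inseparable for $\mu\le 1/2$), this is not a phantom issue. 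It can be repaired --- e.g.\ by running the argument for two consecutive target lengths simultaneously, or via the Cheeger/spectral route you mention --- but neither fix appears in your sketch. The cited lemma absorbs all of this.
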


We use the following lemma by Ebsen et al.~\cite[Lemma 2.4]{Ebsen-Maesaka-Reiher-Schacht-Schulke-2020}.

\begin{lemma} \label{lemma:manypaths}
    Let $\mu \gg 1/\ell_0, c, 1/n$.
    Let $G$ be a $\mu$-inseparable graph on $n$ vertices.
    Then for distinct vertices $x, y \in V(G)$,  there is some integer $\ell$ with $0 \leq \ell \leq \ell_0$ such that the number of $(x,y)$-paths with $\ell$ inner vertices is at least $c n^{\ell}$.
\end{lemma}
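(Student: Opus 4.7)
The plan is to deduce both ingredients directly from the $\mu$-inseparability hypothesis: a linear lower bound on the minimum degree of $G$, and a ``BFS-expansion'' estimate that lets one count $(x,y)$-paths through the BFS-layer structure.

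First I would observe that applying the definition of $\mu$-inseparability to the trivial partition $\{\{v\}, V(G) \setminus \{v\}\}$ yields $\deg_G(v) \geq \mu(n-1)$ for every $v \in V(G)$, so $\delta(G) \geq \mu n / 2$. Next, for any root vertex $u$, introduce the BFS-distance layers $A_i(u)$ (vertices at distance exactly $i$ from $u$) and balls $S_i(u) := \bigcup_{j \leq i} A_j(u)$. The defining property of BFS says that every edge leaving $S_i(u)$ goes from $A_i(u)$ to $A_{i+1}(u)$: no edge can skip a layer, and vertices in strictly smaller layers never see outside $S_i(u)$. Consequently, applying $\mu$-inseparability to $\{S_i(u), V(G) \setminus S_i(u)\}$ while both parts are nonempty gives
\[
|A_i(u)| \cdot |A_{i+1}(u)| \;\geq\; e(A_i(u), A_{i+1}(u)) \;\geq\; \mu \, |S_i(u)| \, (n - |S_i(u)|).
\]
Since $|S_i(u)| \geq |A_i(u)|$, this yields $|A_{i+1}(u)| \geq \mu(n - |S_i(u)|)$. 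While $|S_i(u)| \leq n/2$ we then have $|A_{i+1}(u)| \geq \mu n / 2$, so the ball grows by at least $\mu n/2$ per step and reaches size $\geq n/2$ within $s \leq 2/\mu$ steps.

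Apply this to both $u = x$ and $u = y$ to obtain $s, t \leq 2/\mu$ with $|S_s(x)|, |S_t(y)| \geq n/2$, so the two BFS balls meet in a set $M$ of linear size. To convert this into the desired path count, I plan to count \emph{layered} concatenations $x = v_0, v_1, \ldots, v_s, v_{s+1}, \ldots, v_{s+t} = y$ where $v_i \in A_i(x)$ for $0 \leq i \leq s$ and $v_{s+j} \in A_{t-j}(y)$ for $0 \leq j \leq t$; each such layered walk is automatically a path on its left half and on its right half because the BFS layers are pairwise disjoint. By Cauchy--Schwarz applied to the edge counts $e(A_i(x), A_{i+1}(x)) \geq (\mu n/2)|A_i(x)|$ (and the analogous estimate on the $y$-side), one obtains at least $(\mu n / 2)^{s+t}$ such layered walks from $x$ to $y$ passing through $M$, up to a constant depending on $s+t$. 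Setting $\ell := s + t - 1$, this is a count of order $\mu^{\ell} n^{\ell}$, which comfortably exceeds $c n^{\ell}$ since $\mu \gg c$.

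The main obstacle is converting this walk count into a path count: one must ensure that most layered walks $x \to \cdots \to v \to \cdots \to y$ have no vertex repetition between the left half (in $S_s(x)$) and the right half (in $S_t(y)$). The number of ``bad'' walks, in which some prescribed pair of positions $(i, s+j)$ carries the same vertex, is bounded above by $(s+t)^2 n^{s+t-1}$; since $s+t \leq 4/\mu$ is a constant while our lower bound on good layered walks grows like $\mu^{s+t} n^{s+t}$, this error term is negligible. Taking $\ell \leq s + t \leq 4/\mu \leq \ell_0$ (which holds because $\mu \gg 1/\ell_0$) and pigeonholing over the constantly many allowed lengths then isolates a single value of $\ell$ with at least $c n^{\ell}$ $(x,y)$-paths with $\ell$ inner vertices, as required.
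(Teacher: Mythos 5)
You should first note that the paper does not prove this lemma at all: it is quoted verbatim from Ebsen, Maesaka, Reiher, Schacht and Sch\"ulke (their Lemma 2.4), so there is no in-paper argument to match; your attempt has to stand on its own. The opening steps are fine: $\mu$-inseparability applied to $\{\{v\},V\setminus\{v\}\}$ gives $\delta(G)\ge\mu(n-1)$, all edges leaving a BFS ball $S_i(u)$ go from $A_i(u)$ to $A_{i+1}(u)$, and hence $|A_{i+1}(u)|\ge\mu(n-|S_i(u)|)$, so the ball passes $n/2$ within $O(1/\mu)$ steps; likewise the walk-to-path cleanup and the final pigeonhole over boundedly many lengths are unproblematic. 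The proof breaks at the decisive counting step, ``by Cauchy--Schwarz \dots one obtains at least $(\mu n/2)^{s+t}$ such layered walks.'' The bound $e(A_i(x),A_{i+1}(x))\ge(\mu n/2)|A_i(x)|$ is only an aggregate statement: it is perfectly compatible with almost every vertex of $A_i(x)$ having \emph{no} neighbour in $A_{i+1}(x)$ (a vertex of $A_i$ only needs a neighbour in $A_{i-1}$ to be in that layer), while the edges into $A_{i+1}$ sit on a few other vertices. The number of layered walks reaching layer $i+1$ is $\sum_{u\in A_i}w_i(u)\deg(u,A_{i+1})$, where $w_i(u)$ is the number of layered walks from $x$ to $u$, and these two factors can be negatively correlated; no application of Cauchy--Schwarz converts the aggregate edge bounds into multiplicative growth of the walk count. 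This correlation problem is exactly what the actual proof avoids by working with \emph{robust reachability}: one tracks the set of vertices $v$ that are joined to $x$ by at least $\beta_i n^{i}$ paths with $i$ inner vertices, and uses inseparability of the cut around that set to show it expands past $n/2$ --- a per-vertex guarantee rather than an aggregate count.

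Two further points would also need repair even if the walk count were available. First, the ``meeting'' step: two balls of size just over $n/2$ need only intersect in $O(1)$ vertices, so ``meet in a set $M$ of linear size'' is unjustified; and even with one common vertex $v$, splicing requires many layered $x$--$v$ walks \emph{and} many layered $v$--$y$ walks at the same $v$, which aggregate totals do not give without a per-vertex pigeonhole (again, the robust-reachability formulation supplies precisely this, and also handles the fact that the meeting layers need not be $(s,t)$). Second, as a sanity check, a walk with endpoints $x,y$ and $s+t-1$ inner vertices admits at most $n^{s+t-1}$ choices, so your claimed lower bound $(\mu n/2)^{s+t}$ exceeds the trivial maximum by a factor of order $n$; the correct heuristic is $(\mu n/2)^{s+t-1}$, which is another sign that the counting step is being waved through rather than proved. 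As it stands, the argument has a genuine gap at its core and would need to be rebuilt around robust path counts (or an equivalent device) rather than BFS-layer edge totals.
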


\begin{proof}[Proof of \cref{lemma:diamond-chains}]
   For any distinct vertices $u, v \in V(G')$, applying \cref{lemma:manypaths} to $G'$, we get that there is some integer $\ell$ with $1 \leq \ell \leq \ell_0$ and  $\mu\gg c\gg \rho$ such that the number of $(u,v)$-paths with $\ell-1$ inner vertices is at least $c n^{\ell-1}$.
   Fix a such $(u,v)$-path with $\ell-1$ inner vertices.
   Recall that $xy \in E(G)$ if and only if there are at least $\gamma \binom{n}{k-1}$ many $(x, y)$-diamonds in $H$; so we can replace each of the $\ell$ edges of the path with a diamond.
   In each step (after discounting for avoiding reusing vertices) we have at least $(\gamma / 2) \binom{n}{k-1}$ many choices, so each $(u,v)$-path with $\ell-1$ inner vertices generates at least $(\gamma/2)^{\ell}\binom{n}{k-1}^{\ell}$ many $(u,v,\ell)$-diamond-chains.
   Summing over all $(u,v)$-paths, we get at least $c n^{\ell-1}(\gamma/2)^{\ell}\binom{n}{k-1}^{\ell}\ge \rho n^{k\ell - 1}$ many $(u, v, \ell)$-diamond-chains in~$H$.
\end{proof}

\section{Complex gadgets} \label{section:complexgadgets}
Assuming we are in the case where the $\gamma$-diamond graph has a $\mu$-separation $\{A, B\}$, we will build absorbers.
As it turns out, in this case, absorbers cannot be built for an arbitrary $k$-set $(w, w_1, \dotsc, w_{k-1})$.
Instead, each vertex $w \in V(H)$ will have a \emph{type} $\pi(w) \in \{0,1\}$, essentially found by looking at the $(k-1)$-sets in $N_H(w)$ and looking if $N_H(w)$ has more $(k-1)$-sets $S$ with $|S \cap A|$ even (in which case $\pi(w) = 0$), or odd (in which case $\pi(w) = 1$).
Given this, we will be able to find an absorbing tuple for $(w, w_1, \dotsc, w_{k-1})$ if and only if $|\{w_1, \dotsc, w_{k-1}\} \cap A| = \pi(w) \bmod 2$.
The absorbing structures will be engineered using diamond-chains and balancers.

Given a $k$-graph $H$ and disjoint sets of the same size $X_1, X_2 \subseteq V(H)$, we say that a subgraph $L \subseteq H$ is an \emph{$(X_1, X_2, t)$-gadget} if
\begin{enumerate}
    \item $X_1 \cup X_2 \subseteq V(L)$;
    \item $L$ is the edge-disjoint union of diamond-chains and $t$-balancers;
    \item $L$ can be decomposed into exactly two edge-disjoint isomorphic subgraphs ${L}_1$ and ${L}_2$;
    \item for each $i \in [2]$, ${L}_i$ consists of vertex-disjoint edges and $t$-stars;
    \item for each $t$-balancer or diamond-chain in $L$, one of its halves is in ${L}_1$ and the other one is in ${L}_2$; 
    \item $X_1=V(L)\setminus V({L}_2)$ and $X_2=V(L)\setminus V({L}_1)$.
\end{enumerate}
Note that $X_i \subset V({L}_i)$ for each $i\in [2]$. We say that $L$ is just a \emph{$t$-gadget} if we do not want to refer to the sets $X_1, X_2$. 

As an example, note that any $(x, y, \ell)$-diamond-chain is an $(\{x\}, \{y\}, t)$-gadget for any $t \geq 1$ (since it contains no balancers of any kind).
Also, balancers naturally give rise to gadgets, as follows.
Suppose that $t_A + t_B = t$ and that $J$ is a $(t_A, t_B)$-balancer, with halves $J_1$ and $J_2$.
Then it is an $(X_1, X_2, t)$-gadget for the sets $X_1 = V(J) \setminus V(J_2)$ and $X_2 = V(J) \setminus V(J_1)$.

We can also combine edge-disjoint gadgets to create new gadgets for different sets.
Note that the first observation increases the sizes of the sets, while the second observation does the opposite.

\begin{observation} \label{observation:gadget-up}
    If $L$ is an $(X_1, X_2, t)$-gadget, $L'$ is an $(X'_1, X'_2, t)$-gadget, and $L, L'$ are vertex-disjoint, then $L \cup L'$ is an $(X_1 \cup X'_1, X_2 \cup X'_2, t)$-gadget.
\end{observation}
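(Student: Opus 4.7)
The plan is to verify the six defining properties of an $(X_1 \cup X'_1, X_2 \cup X'_2, t)$-gadget one at a time, using the natural decomposition $L \cup L' = (L_1 \cup L'_1) \cup (L_2 \cup L'_2)$, where $L_1, L_2$ and $L'_1, L'_2$ are the decompositions of $L$ and $L'$ guaranteed by the hypothesis. The vertex-disjointness of $L$ and $L'$ will trivialize almost every step, so I do not expect a real obstacle; the only thing to be careful about is keeping track of which set plays which role in property~(6).

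First I would observe that $X_1 \cup X_2 \subseteq V(L)$ and $X'_1 \cup X'_2 \subseteq V(L')$ immediately give (1), and that since $L$ and $L'$ share no vertex (hence no edge), the edge-disjoint decomposition of $L \cup L'$ into diamond-chains and $t$-balancers is obtained by concatenating the corresponding decompositions of $L$ and $L'$, giving~(2). For~(3), I would take $M_1 := L_1 \cup L'_1$ and $M_2 := L_2 \cup L'_2$: these are edge-disjoint (again by vertex-disjointness of $L$ and $L'$), they cover all edges of $L \cup L'$, and an isomorphism $M_1 \to M_2$ is built by gluing isomorphisms $L_1 \to L_2$ and $L'_1 \to L'_2$ whose domains and codomains lie in disjoint vertex sets.

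For~(4), each $M_i$ is a vertex-disjoint union of edges and $t$-stars because $L_i$ and $L'_i$ already are, and their vertex sets are disjoint. Property~(5) is inherited verbatim: every balancer or diamond-chain in $L \cup L'$ comes entirely from $L$ or entirely from $L'$, and by hypothesis one of its halves already lies in the corresponding $L_i$ or $L'_i$, hence in $M_i$.

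The only step requiring a brief computation is~(6). Using $V(L) \cap V(L') = \emptyset$,
\[
V(L \cup L') \setminus V(M_2) = \bigl(V(L) \cup V(L')\bigr) \setminus \bigl(V(L_2) \cup V(L'_2)\bigr) = \bigl(V(L) \setminus V(L_2)\bigr) \cup \bigl(V(L') \setminus V(L'_2)\bigr),
\]
which by the hypothesis on $L$ and $L'$ equals $X_1 \cup X'_1$; an identical computation handles $V(L \cup L') \setminus V(M_1) = X_2 \cup X'_2$. This completes the verification. As mentioned, no step is genuinely difficult; the point of the lemma is simply to codify that gadgets compose under vertex-disjoint unions, and the proof is routine bookkeeping enabled by the vertex-disjointness hypothesis.
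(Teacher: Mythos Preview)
Your proof is correct and takes essentially the same approach as the paper, which simply states that the observation ``follows directly from the definition of $t$-gadgets since $L$ and $L'$ are vertex-disjoint.'' You have spelled out in full the routine verification that the paper leaves implicit.
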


\begin{observation} \label{observation:gadget-down}
    If $L$ is an $(X_1, X_2, t)$-gadget, $L'$ is an $(X'_1, X'_2, t)$-gadget, $X'_1 \subseteq X_1$, $X'_2 \subseteq X_2$ and $L, L'$ are vertex-disjoint except for $X'_1 \cup X'_2$, then $L \cup L'$ is an $(X_1 \setminus X'_1, X_2 \setminus X'_2, t)$-gadget.
 \end{observation}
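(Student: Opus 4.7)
The plan is to choose a suitable decomposition of $L \cup L'$ and verify the six axioms of a gadget. I would use the \emph{crossed} pairing
\[
(L \cup L')_1 := L_1 \cup L'_2 \quad \text{and} \quad (L \cup L')_2 := L_2 \cup L'_1,
\]
rather than the parallel pairing that worked in~\cref{observation:gadget-up}. The intuition is that every $v \in X'_1$ lies both in $V(L_1)$ (because $X'_1 \subseteq X_1$ and $X_1 \cap V(L_2) = \emptyset$) and in $V(L'_1)$ (by the gadget property of $L'$); sending one ``copy'' of $v$ to each of the two new sides ensures $v$ contributes to both $V((L \cup L')_1)$ and $V((L \cup L')_2)$ and hence cancels out of the two final difference sets. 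The vertices of $X'_2$ cancel symmetrically.

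With this decomposition fixed, axioms (i), (ii), (iii), (v) follow routinely: each balancer or diamond-chain belongs to exactly one of $L, L'$, one of its halves is assigned to each of the two sides of $L \cup L'$, and the two sides are isomorphic since $L_1 \cong L_2$ and $L'_1 \cong L'_2$ separately. Edge-disjointness of $L$ and $L'$ holds because any shared edge would have all its $k$ vertices inside $V(L) \cap V(L') = X'_1 \cup X'_2$, and in our applications this intersection is too small to fit an edge.

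The key technical step is axiom (iv): showing that each $(L \cup L')_i$ remains a vertex-disjoint union of edges and $t$-stars. For this I would establish the two identities $V(L_1) \cap V(L'_2) = \emptyset$ and $V(L_2) \cap V(L'_1) = \emptyset$. The first follows from
\[
V(L_1) \cap V(L'_2) \subseteq V(L_1) \cap V(L') = V(L_1) \cap (X'_1 \cup X'_2) = X'_1,
\]
using that $X'_2 \subseteq X_2$ together with $V(L_1) \cap X_2 = \emptyset$; and then $X'_1 \subseteq V(L'_1)$ is disjoint from $V(L'_2)$ by the gadget structure of $L'$. The second identity is symmetric.

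Finally, for axiom (vi), I would split $V(L \cup L')$ into the three pieces $V(L) \setminus V(L')$, $V(L') \setminus V(L)$, and $V(L) \cap V(L') = X'_1 \cup X'_2$, and use the identities above to conclude
\[
V(L \cup L') \setminus V((L \cup L')_2) = X_1 \setminus X'_1 \quad \text{and} \quad V(L \cup L') \setminus V((L \cup L')_1) = X_2 \setminus X'_2,
\]
as required. The only real obstacle is recognizing that the crossed pairing, as opposed to the parallel one used in \cref{observation:gadget-up}, is exactly what forces the shared set $X'_1 \cup X'_2$ to land in both sides of the new decomposition and thereby drop out of the differences.
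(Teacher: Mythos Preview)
Your proposal is correct and follows essentially the same approach as the paper: the paper also uses the crossed decomposition $L_1 \cup L'_2$ and $L_2 \cup L'_1$, verifies axiom (iv) via exactly the containments $X'_1 \subseteq V(L_1)$, $X'_1 \cap V(L'_2) = \emptyset$, $X'_2 \subseteq V(L'_2)$, $X'_2 \cap V(L_1) = \emptyset$, and deduces axiom (vi) from $X'_i \subseteq X_i \subseteq V(L_i)$ and $X'_i \subseteq V(L'_i)$. Your extra remark on edge-disjointness of $L$ and $L'$ (via $|X'_1 \cup X'_2| < k$) is a point the paper leaves implicit, and indeed in every application the $L'$ is a diamond-chain with $|X'_1| = |X'_2| = 1$, so the bound holds.
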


\cref{observation:gadget-up} follows directly from the definition of $t$-gadgets since $L$ and $L'$ are vertex-disjoint. \cref{observation:gadget-down} is a bit more complex.
To see it, suppose that $L$ is an $(X_1, X_2, t)$-gadget that 
can be decomposed into two edge-disjoint isomorphic subgraphs $L_1$ and $L_2$, and 
$L'$ is an $(X'_1, X'_2, t)$-gadget that 
can be decomposed into two edge-disjoint isomorphic subgraphs $L'_1$ and $L'_2$. 
Then $L_1\cup L'_2$ and $L_2\cup L'_1$ are two edge-disjoint isomorphic subgraphs of $L \cup L'$. Moreover, $L_1\cup L'_2$ consists of vertex-disjoint edges and $t$-stars, since $L, L'$ are vertex-disjoint except for $X'_1 \cup X'_2$, 
$X'_1\subseteq X_1\subset V(L_1)$, $X'_1\cap V(L'_2)=\emptyset$, and 
$X'_2\subset V(L'_2)$, $X'_2\subseteq X_2$ and $X_2\cap V(L_1)=\emptyset$. Similarly, $L_2\cup L'_1$ consists of vertex-disjoint edges and $t$-stars.
Finally, since $X_i' \subseteq X_i \subseteq V(L_i)$ and $X_i' \subseteq V(L_i')$ for $i \in [2]$, we have \[X_1 \setminus X_1' = V(L \cup L') \setminus V(L_{2} \cup L_{1}') \quad \text{ and} \quad X_2 \setminus X_2' = V(L \cup L') \setminus V(L_{1} \cup L_{2}').\] 
Thus,  $L \cup L'$ is an $(X_1 \setminus X'_1, X_2 \setminus X'_2, t)$-gadget.

We leverage the above observations to find many $(Y_1, Y_2, t)$-gadgets for some $2$-sets $Y_1, Y_2$ contained in $A$ and $B$, respectively.

\begin{lemma}[Even balancing across sets] \label{lemma:fewdiamonds-pairbalancer}
    Let $k \geq 3$, $t \geq 1$ be integers, and $1/k, \eps \gg \gamma \geq \mu \gg 1/M, 1/n$ and $\eps, 1/t \gg 1/M \gg \rho$.
    Let $H$ be an $n$-vertex $k$-graph with $\delta_{k-1}(H) \geq (1/3 + \eps)n$, and let $G$ be its $\gamma$-diamond graph.
    Suppose $G$ has a $\mu$-separation $\{A, B\}$ such that $G[A], G[B]$ are $\mu$-inseparable.
    Then there exists $m \leq M$ such that there are at least $\rho n^{m}$ many $m$-vertex $t$-gadgets in $H$; and for each of those gadgets $L$, there exist $Y_1 \subseteq A$, $Y_2 \subseteq B$ of size $2$ such that $L$ is a $(Y_1, Y_2, t)$-gadget.
    
    Moreover, if in addition $t$ is even, then we get the same outcome for sets $Y_1, Y_2$ of size one each; and moreover, $L$ contains exactly one $t$-balancer.
\end{lemma}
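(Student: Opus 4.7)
The plan is to construct $t$-gadgets by combining balancers (from \cref{lemma:balancers}) with diamond-chains (from \cref{lemma:diamond-chains}), composed via \cref{observation:gadget-up} and \cref{observation:gadget-down}. Any $(t_A, t_B)$-balancer $J$ is automatically an $(X_1, X_2, t)$-gadget with $|X_1| = |X_2| = 1 + (k-2)t$ and \emph{capacity} $|X_1 \cap A| - |X_2 \cap A| = 1 + t_B - t_A$. The idea is to arrange the initial capacity to be exactly $2$ (or $1$ for the \emph{moreover} part) and then use diamond-chains to peel off the unwanted vertices until $|X_1|=|X_2|$ reaches the target size, with $Y_1 \subseteq A$ and $Y_2 \subseteq B$.

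For the main case with $t$ odd, I would take one $(t_A, t_A+1)$-balancer (capacity $2$). For $t$ even I would take two vertex-disjoint $(t/2, t/2)$-balancers (capacity $1$ each) and combine them via \cref{observation:gadget-up}, giving total capacity $2$. A direct computation using the explicit homomorphism in the proof of \cref{lemma:balancers} shows that, in any balancer produced there, the number of $A$-vertices among the $k-2$ expansion vertices of each half-edge $f_{1,i}$ equals $|S \cap A|$ or $|S \cap A|+1$, where $S$ is the base of the underlying proto-balancer. Summing yields $|X_1 \cap A| \geq 1 + t_B \geq 2$ and symmetrically $|X_2 \cap B| \geq 2$, which is exactly what lets the peeling terminate with the correct cardinalities in each partition class.

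To peel, we exploit the $\mu$-inseparability of $G[A]$ and $G[B]$. For each excess vertex $u$ to be removed from $X_1$, pair it with some $v \in X_2$ in the same partition class; \cref{lemma:diamond-chains} supplies $\Omega(n^{k\ell - 1})$ many $(u,v,\ell)$-diamond-chains for some $\ell \leq \ell_0$, each of which is itself a $(\{u\},\{v\},t)$-gadget that \cref{observation:gadget-down} absorbs, removing $u$ from $X_1$ and $v$ from $X_2$. Because the intermediate gadget has bounded size, only $o(n^{k\ell-1})$ chains are spoiled by non-disjointness, so $\Omega(n^{k\ell-1})$ valid choices remain. Multiplying $\Omega(n^{(2k-3)t+2})$ balancer choices by the peeling factors gives $\Omega(n^{m^\star})$ gadgets of a specific vertex count $m^\star$; since the lengths $\ell_j \leq \ell_0$ take only finitely many values, pigeonholing over these profiles yields at least $\rho n^m$ gadgets with a single fixed $m \leq M$. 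The \emph{moreover} part proceeds identically from a single $(t/2, t/2)$-balancer (capacity $1$), peeling to $|X_1|=|X_2|=1$; by construction the resulting gadget contains exactly one $t$-balancer.

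The main obstacle I anticipate is the partition-class bookkeeping during the peeling: because diamond-chains exist in abundance only within $G[A]$ and within $G[B]$ (and not across the separation), every excess vertex removed from $X_1 \cap A$ must be paired with one removed from $X_2 \cap A$, and likewise for $B$. This matching is guaranteed precisely by the capacity calibration, which forces $|X_1 \cap A|-|X_2\cap A|$ to agree with the target difference $|Y_1\cap A|-|Y_2\cap A|$ and $|X_2\cap B|-|X_1\cap B|$ to agree with $|Y_2\cap B|-|Y_1\cap B|$. Choosing the number and type of balancers so as to realise capacity $2$ (main) or $1$ (\emph{moreover}) is therefore the critical step of the construction.
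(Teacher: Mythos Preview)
Your proposal is correct and follows essentially the same approach as the paper: start from a $(t_A,t_B)$-balancer of the right capacity obtained via \cref{lemma:balancers}, then peel off matched same-class pairs using diamond-chains from \cref{lemma:diamond-chains} together with \cref{observation:gadget-down}, and pigeonhole over the finitely many length profiles to fix a single $m$. The only cosmetic difference is in the even-$t$ main case: the paper first peels a single capacity-$1$ balancer down to singleton sets (this immediately yields the \emph{moreover} part) and then combines two such size-$1$ gadgets via \cref{observation:gadget-up}, whereas you combine two balancers first and then peel; both orderings work. Your explicit verification that $|X_1\cap A|\ge 1+t_B$ and $|X_2\cap B|\ge 1+t_B$ is a point the paper leaves implicit in its indexing of the $x^i_j$.
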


\begin{proof}
    Let $\eps, 1/t, \mu \gg 1/L \gg 1/M \gg \rho_1, \rho_2 \gg \rho$.
    Let $t_A = \lfloor t/2 \rfloor$, $t_B = \lceil t/2 \rceil$, and $m_1 = (2k-3)t+2$.
    By \Cref{lemma:balancers}, there are at least $\rho_1 n^{m_1}$ many $m_1$-vertex $(t_A, t_B)$-balancers in $H$.
    Let $J$ be one of those balancers, with halves $J_1$ and $J_2$.
    Then, as discussed, $J$ is an $(X_1, X_2, t)$-gadget for the sets $X_1 = V(J_1) \setminus V(J_2)$ and $X_2 = V(J_2) \setminus V(J_1)$.
    Recalling the construction of $(t_A, t_B)$-balancers, we have 
\[
|X_2\cap B|-|X_1\cap B|=1 + t_B - t_A=\begin{cases} 
2 & \text{for odd }  t , \\
1 & \text{for even } t.
\end{cases}
\]
Let $s=|X_1|=|X_2|$, $r = |X_1 \cap B|$ and $r + c = |X_2 \cap B|$, where $c = 2$ or $c = 1$ if $t$ is odd or even, respectively.
Suppose $X_i = \{ x^i_1, \dotsc, x^i_s\}$, such that $X_1 \cap B = \{ x^1_1, \dotsc, x^1_{r} \}$ and $X_2 \cap B = \{ x^2_1, \dotsc, x^2_{r+c} \}$.
    Thus for $j \in [r]$, we have that $x^1_j, x^2_j$ belong to $B$, and for $j \in [s] \setminus [r+c]$, we have that $x^1_j, x^2_j$ belong to $A$.
    
    Since both $x^1_1$ and $x^2_1$  belong to $B$ and $G[B]$ is $\mu$-inseparable by assumption, by \Cref{lemma:diamond-chains} there exists $\ell \leq L$ such that there are at least $2 \rho_2 n^{k\ell -1}$ many $(x^1_1, x^2_1, \ell)$-diamond-chains in $H$.
    Of these, at least $\rho_2 n^{k\ell -1}$ are vertex-disjoint with $J$ (apart from $x^1_1, x^2_1$).
    By \Cref{observation:gadget-down}, the union of $J$ with any of those diamond-chains gives an $(X_1 \setminus \{x^1_1\}, X_2 \setminus \{x^2_1\}, t)$-gadget.
    Iterate this, finding an $(x^1_j, x^2_j, \ell_j)$-diamond-chain for every $j \in \{2, \dotsc, r\}$ and for every $j \in \{r+c+1, \dotsc, s\}$.
    Taking the union, we obtain a $(Y_1, Y_2, t)$-gadget, where $Y_i = \{ x^i_{r+1}, \dotsc, x^i_{r+c} \}$, and we have $Y_1 \subseteq A$ and $Y_2 \subseteq B$.

    If $t$ is odd, then $c = 2$; thus the above process creates the required gadgets.
    If $t$ is even, then $c = 1$; thus the above process creates gadgets between sets $Y_1, Y_2$ of size $1$ each (note that this also gives immediately the `moreover' part).
    Then taking two vertex-disjoint gadgets as above and using \Cref{observation:gadget-up} gives gadgets for suitable sets of size $2$, as desired.
\end{proof}

\subsection{Parity-gadgets}

We will build many gadgets between two arbitrary $(k-1)$-sets as long as their intersection with $A$ has the same parity.

\begin{lemma}[Gadgets from few diamonds]\label{Gadgets from few diamonds}
    Let $k \geq 3$,  $1/k, \eps \gg \gamma \geq \mu \gg 1/M \gg 1/n$ and $\eps,  1/t \gg \rho$.
    Let $H$ be an $n$-vertex $k$-graph with $\delta_{k-1}(H) \geq (1/3 + \eps)n$, and let $G$ be its $\gamma$-diamond graph.
    Suppose $G$ has a $\mu$-separation $\{A, B\}$, such that $G[A], G[B]$ are $\mu$-inseparable.
    Let $X_1, X_2$ be $(k-1)$-sets such that $|X_1 \cap A| \equiv |X_2 \cap A| \bmod 2$.
    Then there exists some $m \leq M$ such that there are $\rho n^{m - 2(k-1)}$ many $m$-vertex $(X_1, X_2, t)$-gadgets in $H$.
\end{lemma}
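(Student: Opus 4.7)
The plan is to build an $(X_1, X_2, t)$-gadget piece by piece along the partition $\{A,B\}$: diamond-chains will handle pairs of vertices that both lie in $A$ (or both in $B$), while pair-balancers will handle pairs that must cross between $A$ and $B$. Write $A_i := X_i \cap A$ and $B_i := X_i \cap B$ for $i \in [2]$. The parity hypothesis means $|A_1|-|A_2|$ is even; WLOG $|A_1| \geq |A_2|$, and set $2r := |A_1|-|A_2|=|B_2|-|B_1|$. I fix a matching $M_A$ pairing all of $A_2$ with an equal-sized subset of $A_1$, a matching $M_B$ pairing all of $B_1$ with an equal-sized subset of $B_2$, and group the remaining $2r$ vertices of $A_1\setminus V(M_A)$ together with the remaining $2r$ vertices of $B_2\setminus V(M_B)$ into $r$ ``quadruples,'' each comprising two $A$-vertices and two $B$-vertices.

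For each pair in $M_A$, I apply \cref{lemma:diamond-chains} inside $G[A]$ (which is $\mu$-inseparable by hypothesis) to obtain, for some $\ell\leq \ell_0$, at least $\rho_0 n^{k\ell-1}$ diamond-chains connecting its two endpoints; each such chain is a $(\{a\},\{a'\},t)$-gadget. I do the same inside $G[B]$ for each pair of $M_B$. For each quadruple $(\{a',a''\},\{b',b''\})$, I first apply \cref{lemma:fewdiamonds-pairbalancer} to find a pair-balancer $L$, which is a $(Y_1,Y_2,t)$-gadget with $Y_1\subseteq A,\,Y_2\subseteq B$ of size~$2$; then I attach four diamond-chains inside $G[A]$ (from each $y\in Y_1$ to one of $a',a''$) and inside $G[B]$ (from each $y\in Y_2$ to one of $b',b''$). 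Taking the disjoint union across all pairs and all quadruples and invoking \cref{observation:gadget-up} yields the desired $(X_1,X_2,t)$-gadget.

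The count is standard supersaturation: each diamond-chain contributes at least $\rho_0 n^{k\ell-1}$ choices and each pair-balancer at least $\rho_1 n^{m_1}$, while vertex-disjointness across the $O_k(1)$ pieces is enforced greedily at the cost of a subleading $(1-o(1))$ factor. The number of ``new'' vertices across all pieces sums to $m - 2(k-1)$, where $m$ is the vertex count of the assembled gadget; by pigeonhole over the finitely many possible tuples of lengths $(\ell,m_1)$ arising in \cref{lemma:diamond-chains,lemma:fewdiamonds-pairbalancer}, one concludes $\rho n^{m - 2(k-1)}$ gadgets for some $m\leq M$.

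The main obstacle is to verify that each ``crossing gadget,'' consisting of one pair-balancer with four attached chains, is genuinely a $(\{a',a''\},\{b',b''\},t)$-gadget. This composite does not fit the form of either \cref{observation:gadget-up} or \cref{observation:gadget-down}, because each attached chain shares exactly one vertex with the balancer and its other endpoint lies outside $V(L)$. I therefore verify the gadget axioms for the composite directly, declaring its two halves to be $L_1$ together with the four chain-halves \emph{not} containing the balancer endpoints, and $L_2$ together with the four chain-halves that \emph{do} contain them; then one checks that these halves are edge-disjoint and isomorphic, and that the exposed-vertex sets in the two states are precisely $\{a',a''\}$ and $\{b',b''\}$.
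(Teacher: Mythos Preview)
Your overall strategy is correct and closely mirrors the paper's: both build $(X_1,X_2,t)$-gadgets from diamond-chains within $A$ or within $B$, using pair-balancers from \cref{lemma:fewdiamonds-pairbalancer} to handle the parity imbalance. The paper proceeds by induction on the imbalance $|X_2\cap A|-|X_1\cap A|=2s$, introducing an auxiliary $(k-3)$-set $Z$ at each step and recursing on two smaller instances; you instead unroll this into a single direct construction with $r$ ``quadruple'' pieces. These are equivalent and the counting is routine in both cases.

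However, your direct verification in the final paragraph is incorrect as stated. With your declared halves (namely $L_1$ together with all four chain-halves containing $a',a'',b',b''$, versus $L_2$ together with all four chain-halves containing $y_1,y_1',y_2,y_2'$), the exposed sets are $\{a',a'',b',b''\}$ and $\{y_2,y_2'\}$, which do not even have equal size. The point is that the two $A$-chains and the two $B$-chains must be oriented \emph{oppositely}: one correct half is $L_2$ together with the chain-halves covering $y_1,y_1'$ (from the $A$-chains) and the chain-halves covering $b',b''$ (from the $B$-chains); the other half is $L_1$ together with the chain-halves covering $a',a''$ and those covering $y_2,y_2'$. With this choice one checks directly that the exposed sets are $\{a',a''\}$ and $\{b',b''\}$ and that each half is a vertex-disjoint union of edges and $t$-stars.

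In fact your earlier claim that the composite ``does not fit the form of either \cref{observation:gadget-up} or \cref{observation:gadget-down}'' is also incorrect: orient the four chains as $(\{a'\},\{y_1\})$, $(\{a''\},\{y_1'\})$, $(\{y_2\},\{b'\})$, $(\{y_2'\},\{b''\})$, combine them via \cref{observation:gadget-up} into an $(\{a',a'',y_2,y_2'\},\{y_1,y_1',b',b''\},t)$-gadget, then apply \cref{observation:gadget-down} with $L$ viewed as a $(\{y_2,y_2'\},\{y_1,y_1'\},t)$-gadget to obtain the desired $(\{a',a''\},\{b',b''\},t)$-gadget. So the two observations do suffice; no separate verification is needed.
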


\begin{proof}
    Without loss of generality, we can assume $|X_1 \cap A| \leq |X_2 \cap A|$.
    Since $|X_1 \cap A| \equiv |X_2 \cap A| \bmod 2$, we have that $|X_2 \cap A| - |X_1 \cap A|$ must be even.
    We will prove the result by induction on $|X_2 \cap A| - |X_1 \cap A|$.

    We set up notation to formally carry the induction procedure.
    Let $\ell_0, c$ be such that $\mu \gg 1/\ell_0, c$, so \Cref{lemma:diamond-chains} is valid with the choice of $\mu, \ell_0, c$.
    For each $0 \leq s \leq k-1$, let \[ m_s \gg \dotsb \gg m_0 \gg 1/\mu , \qquad \rho_s \ll \dotsb \ll \rho_0 \ll \eps, 1/t, \mu. \]
    We will use induction to show, for each $s \geq 0$, that any two disjoint $(k-1)$-sets $X_1, X_2 \subseteq V(H)$ with $|X_1 \cap A| \equiv |X_2 \cap A| \bmod 2$ and $|X_2 \cap A| - |X_1 \cap A| = 2 s$, there exists $m \leq m_s$ such that there are at least $\rho_s n^{m-2(k-1)}$ many $m$-vertex $(X_1, X_2, t)$-gadgets. 
    This allows us to conclude with $M := \max\{ m_s : 0 \leq 2s \leq k-1 \}$ and $\rho := \min \{ \rho_s : 0 \leq 2s \leq k-1 \}$.

    We prove the base case $s=0$.
    Let $X_1, X_2 \subseteq V(H)$ be two disjoint $(k-1)$-sets such that $|X_1 \cap A| = |X_2 \cap A| = r$.
    In this case, we can construct $(X_1, X_2, t)$-gadgets using only diamond-chains.
    We will describe the construction of one such structure by adding substructures step by step, and then argue that we have many choices in each step, to this process yields many $(X_1, X_2, t)$-gadgets.
    For each $i \in [2]$, let $X_i = \{ x^i_1, \dotsc, x^i_{k-1}\}$ where $X_i \cap A = \{ x^i_1, \dotsc, x^i_r\}$.
    For each $j \in [k-1]$, since $x^1_j$ and $x^2_j$ belong to the same part, we add an $(x^1_j, x^2_j, \ell)$-diamond-chain that is vertex-disjoint from $X_1 \cup X_2$ and from all the diamond-chains found so far.
    By \Cref{observation:gadget-up}, this process creates an $(X_1, X_2, t)$-gadget.
    \Cref{lemma:diamond-chains} ensures that each diamond-chain has at most $k(\ell_0+1)-1$ extra vertices, so in total our structure has at most $|X_1|+|X_2|+(k-1)(k(\ell_0+1)-1) \leq 2k+k^2(\ell_0+1) \leq m_0$ vertices.
    Suppose now we need to choose the $j$-th diamond-chain.
    \Cref{lemma:diamond-chains} ensures that there is some $\ell_j \leq \ell_0$ and at least $c n^{ k (\ell_j+1)-1}$ many $(x^1_j, x^2_j, \ell_j)$-diamond-chains.
    Since we need to avoid at most $2k+k^2(\ell_0+1)$ vertices, there are still, say, at least $c n^{ k (\ell_j+1)-1} / 2$ possible choices for the $j$-th step.
    This process generates $(X_1, X_2, t)$-gadgets with exactly $m'_0:= 2(k-1) + \sum_{j=1}^{k-1} (k(\ell_j+1)-1) ) \leq m_0$ many vertices, and by construction, the number of choices is at least
    \[ \prod_{j=1}^{k-1} \left( \frac{c}{2} n^{ k (\ell_j+1) - 1}\right) = \left( \frac{c}{2} \right)^{k-1} n^{m'_0 - 2(k-1)} \geq \rho_0 n^{m'_0 - 2(k-1)}, \]
    as desired. This proves the base case of the induction.
    
    Now suppose that $|X_2 \cap A| - |X_1 \cap A| = 2s > 0$.
    As before, we describe the construction of the $(X_1, X_2, t)$-gadgets and then argue about the number of choices.
    By \Cref{lemma:fewdiamonds-pairbalancer}, there exists $ M'<M$ such that there are at least $\rho_0 n^{M'}$ many $M'$-vertex $t$-gadgets in $H$.
    Fix one of those $t$-gadgets $L$, which is a $(Y_1, Y_2, t)$-gadget for some $2$-sets $Y_1, Y_2$ contained in $A$ and $B$, respectively.
    We first claim that $|A|,|B|\geq n/20$. Towards a contradiction,  we suppose $|A|<n/20$ and $|B|> 19n/20$. For any $x\in A$, by \cref{lemma:diamondgraph-mindeg}, $\deg_{G}({x},B)\ge \delta(G)-|A|\ge n/20$.
    So $e_G(A,B)\ge n|A|/20>\mu |A||B|$, which is a contradiction.
    Select an arbitrary set $Z$ of size $k-3$, such that $|Z \cap A| = |X_2 \cap A|-2$, and disjoint from $X_1, X_2$ and $V(L)$.
    The number of choices for such a set $Z$ is at least $\binom{|A|-M'-2k}{|X_2 \cap A|-2}\binom{|B|-M'-2k}{k-1-|X_2 \cap A|}\ge \eps n^{k-3}$.
    Let $Z_i = Z \cup Y_i$, for $i \in [2]$.
    Note that $Z_2$ is a $(k-1)$-set with $|Z_2 \cap A| = |Z \cap A| = |X_2 \cap A|-2$, so we have $|Z_2 \cap A| - |X_1 \cap A| = 2(s-1)$. 
    By induction, we can find $\rho_{s-1} n^{m_{s-1}-2(k-1)}$ many $(X_1, Z_2, t)$-gadgets; let $L_1$ be one that is vertex-disjoint from $Y_1, X_2$ and $V(L) \setminus Z_2$.
    Since $|Z_1 \cap A| = |Z_2 \cap A| + 2 = |X_2 \cap A|$, again by induction we can find $\rho_{s-1} n^{m_{s-1}-2(k-1)}$ many $(Z_1, X_2, t)$-gadgets, and selecting one denoted by $L_2$ that is vertex-disjoint from the structures constructed so far.
    Let $m_s:=M'+2m_{s-1}-k-1$.
    Thus the union $L \cup L_1\cup L_2$ gives an $(X_1, X_2, t)$-gadget with $m_s$ vertices, as desired.
    Summarizing, the number of $(X_1, X_2, t)$-gadgets is at least  \[ \rho_0 n^{M'}\eps n^{k-3}(\rho_{s-1} n^{m_{s-1}-2(k-1)})^2\ge \rho_s n^{M'+2m_{s-1}-k-1-2(k-1)}=\rho_s n^{m_{s}-2(k-1)},\]
    as desired. This finishes the proof.
\end{proof}

\subsection{Parity-absorbers}

Let $H$ be a $k$-graph, $w \in V(H)$ and $S\subseteq V(H)$ be a $(k-1)$-set of $V(H)$.
A \emph{$t$-absorbing gadget} for $(w, S)$ consists of a $(k-1)$-set $W \in N_H(w)$, together with a $(W, S, t)$-gadget $L$ that is disjoint from $w$. Suppose $L$ is decomposed into exactly two edge-disjoint isomorphic subgraphs ${L}_1$ and ${L}_2$ such that $S\subset V(L_1)$ and $W\subset V(L_2)$.   
We call $L_2$ a \emph{parity-absorber}  for $(w, S)$. 
Let $m := |V(L)| - k + 1$ be the \emph{size} of the parity-absorber.
For $w \in V(H)$, a $(k-1)$-set $S \subseteq V(H)$, and $m \geq 0$, let $\mathcal{A}^{(t)}_m(w, S)$ be the set of all parity-absorbers of size $m$  within $t$-absorbing gadgets in $H$.

\begin{lemma}[Parity-absorbers from few diamonds] \label{Parity-absorbers}
    Let $k \geq 3$,  $1/k, \eps \gg \gamma \geq \mu \gg 1/M \gg 1/n$ and $\eps,  1/t \gg \rho$.
    Let $H$ be an $n$-vertex $k$-graph with $\delta_{k-1}(H) \geq (1/3 + \eps)n$, and let $G$ be its $\gamma$-diamond graph.
    Suppose $G$ has a $\mu$-separation $\{A, B\}$, such that $G[A], G[B]$ are $\mu$-inseparable.
    Then there exists $\pi: V(H) \rightarrow \{0,1\}$ such that for every $w \in V(H)$, and every $(k-1)$-set $S \subseteq V(H)$ with $|S \cap A| \equiv \pi(w) \bmod 2$, there exists $m \leq M$ such that $|\mathcal{A}^{(t)}_m(w, S)| \geq \rho n^m$.
\end{lemma}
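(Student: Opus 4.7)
The plan is to define $\pi$ via a majority vote on parities and build each parity-absorber by combining a carefully chosen $(k-1)$-set $W \in N_H(w)$ with a $(W,S,t)$-gadget supplied by \cref{Gadgets from few diamonds}. By \cref{degree}, the codegree hypothesis $\delta_{k-1}(H) \geq (1/3+\eps)n$ upgrades to $\delta_1(H) \geq (1/3+\eps)\binom{n-1}{k-1}$. For each $w \in V(H)$, I partition the $(k-1)$-sets in $N_H(w)$ according to the parity of their intersection with $A$; pigeonhole gives a majority class of size at least $(1/6+\eps/2)\binom{n-1}{k-1}$, and I define $\pi(w) \in \{0,1\}$ to be this majority parity.

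Now fix $w$ and an admissible $S$, i.e.\ $|S\cap A| \equiv \pi(w) \bmod 2$. Call a $(k-1)$-set $W$ \emph{valid} if $W \in N_H(w)$, $|W\cap A| \equiv \pi(w) \bmod 2$, and $W$ is disjoint from $S \cup \{w\}$. Discarding the $O(n^{k-2})$ choices meeting $S\cup\{w\}$ from the majority class still leaves $\Omega_{\eps,k}(n^{k-1})$ valid $W$. For each valid $W$, the parity match $|W\cap A| \equiv |S\cap A| \bmod 2$ lets me apply \cref{Gadgets from few diamonds} to the disjoint pair $(W,S)$: this yields some $m_W \leq M$ and at least $\rho_1 n^{m_W - 2(k-1)}$ many $(W,S,t)$-gadgets on $m_W$ vertices (with $\rho_1$ being the $\rho$ supplied by that lemma), of which only an $O(1/n)$ fraction can intersect $\{w\}$ and are discarded.

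Pigeonhole on $m_W \in \{1,\dots,M\}$ fixes a single size $m^* \leq M$ attained by at least a $1/M$-fraction of the valid $W$, producing $\Omega_{\eps,k,t,M}(n^{m^*-(k-1)})$ ordered pairs $(W,L)$; each such pair yields a parity-absorber $L_2$ of size $m := m^*-(k-1) \leq M$. The main delicate point is controlling the fibers of the map $(W,L)\mapsto L_2$: given $L_2$ and the fixed set $S$, one recovers $V(L) = V(L_2) \cup S$, then $W$ is one of at most $\binom{M}{k-1}$ many $(k-1)$-subsets of $V(L_2)$, and once $W$ is fixed the complementary half $L_1$ is forced by the rigid diamond/balancer structure of the gadget, so each $L_2$ has at most $C = C(k,t,M)$ preimages. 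Choosing $\rho$ sufficiently small in terms of $\eps, t$ (independent of $n$) then gives $|\mathcal{A}^{(t)}_m(w,S)| \geq \rho n^m$, as desired.
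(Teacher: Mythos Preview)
Your argument is correct and follows essentially the same route as the paper: define $\pi(w)$ by majority parity in $N_H(w)$, then for each admissible $S$ pick a $W\in N_H(w)$ of matching parity and invoke \cref{Gadgets from few diamonds} to produce many $(W,S,t)$-gadgets, and count. The paper's proof is terser and omits several points you handle explicitly --- the pigeonhole on the gadget size $m_W$, the disjointness of $W$ from $S\cup\{w\}$, and the overcounting in passing from pairs $(W,L)$ to parity-absorbers $L_2$ --- so your version is if anything more complete.

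One small remark on the fiber bound: your recovery $V(L)=V(L_2)\cup S$ is correct with the labelling convention in the parity-absorber definition (where $W\subset V(L_2)$ and $S\subset V(L_1)$), but the claim that ``once $W$ is fixed the complementary half $L_1$ is forced'' is slightly too strong as stated. What is certainly true, and is all you need, is that $L_1$ is a sub-$k$-graph of $H[V(L_2)\cup S]$ on at most $M$ vertices, so there are at most $2^{\binom{M}{k}}$ possibilities for it; this still gives a fiber bound of $C(k,t,M)$ independent of $n$, and your conclusion goes through unchanged.
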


\begin{proof}
    We begin by defining the function $\pi$.
    Let $w \in V(H)$ be arbitrary.
    Since $\delta_{k-1}(H) \geq (1/3 + \eps) n$, we have $|N_H(w)| \geq (1/3 + \eps) \binom{n-1}{k-1}$.
    If there are at least $\binom{n-1}{k-1}/6$ many $(k-1)$-sets $W \in N_H(w)$ such that $|W \cap A| \equiv 0 \bmod 2$, then set $\pi(w) = 0$, otherwise set $\pi(w) = 1$.

    Next, given a vertex $w$, if a $(k-1)$-set $S \subseteq V(H)$ satisfies $|S \cap A| = \pi(w) \bmod 2$, by our choice of $\pi(w)$ there are at least $\binom{n-1}{k-1}/6$ many $(k-1)$-sets $W \in N_H(w)$ such that $|S \cap A| \equiv |W \cap A| \bmod 2$.
    For each such choice of $W$, by \cref{Gadgets from few diamonds}, there are at least $\rho' n^{|V(L)|- 2(k-1)}$ many  $(W,S,t)$-gadgets $L$ with $|V(L)|\le M$ and $\eps,1/t\gg\rho'\gg \rho $.
    Let $m := |V(L)| - k + 1$.
    Summing over all $\binom{n-1}{k-1}/6$ many $(k-1)$-sets $W \in N_H(w)$, we get that $|\mathcal{A}^{(t)}_m(v, S) |\geq\rho' n^{|V(L)|- 2(k-1)} \binom{n-1}{k-1}/6\ge\rho n^m$.
\end{proof}

\section{Immersion Lemma} \label{section:immersion}
In this section, we show how to cover our gadgets (absorbers, balancers, etc.) with an embedding of a small tree.
Each of our gadgets consists of edge-disjoint unions of diamonds and $k$-expansion of $K_{2,t}$ for some integer $t$.
As explained in \Cref{ssection:sketch-gadgets}, to be able to perform switches we need to have an embedding that covers `half' of the edges in our gadget: either half of the edges of each diamond or half of a $k$-expansion of $K_{2,t}$.

We introduce language to formalize this discussion.
Note that a $t$-star with a center vertex $r$, denoted by $S_r$, is a $k$-graph with $t$ edges that are disjoint except for $r$.
A \emph{half-diamond} is simply a $1$-star together with a center vertex $r$, called the \emph{center} of the half-diamond.
Now suppose $T$ is a tree, and $\phi$ is an embedding of $T^{(k)}$ in a $k$-graph $H$.

\begin{enumerate}
    \item Given a $t$-star $S_r \subseteq H$, we say that $S_r$ is \emph{immersed in $\phi(T^{(k)})$} if there exists a vertex $v \in V(T)$ of degree $t$ such that $\phi(v) = r$, and all the $t$ edges in $T^{(k)}$ adjacent to $v$ are mapped to distinct edges of $S_v$;

    \item Given a half-diamond $S$ with center $r$, we say that $S$ is \emph{immersed in $\phi(T^{(k)})$} if there exists an edge $uv \in E(T)$ such that its $k$-expansion $e \in E(T^{(k)})$ is mapped to $S$ via $\phi$, and $r \notin \{ \phi(u), \phi(v)\}$.
\end{enumerate}

The next lemma shows that any sufficiently small collection of $t$-stars and half-diamonds can be immersed by an embedding of a small $k$-expansion hypertree.

\begin{lemma} \label{lemma:immersion}
Let $k \ge 2$ with $1/k, \Delta, \eps \gg \alpha \gg \beta \gg 1/n$.
Let $H$ be a $k$-graph on $n$ vertices
satisfying
\[
\delta_d(H)\ge \begin{cases} 
(\frac{1}{2^{k-1}}+\eps) \binom{n-d}{k-d} & \text{for } d=1, \\
\eps \binom{n-d}{k-d} & \text{for }  2\le d\le k-1.
\end{cases}
\]
Let $T$ be a tree on $\alpha n$ vertices and with $\Delta (T)\leq \Delta$, and let $t \in [\Delta]$ be such that there are at least $|V(T)|/\Delta$ vertices of degree $t$ in $T$.
Let $\mathcal{A}$ be a family of at most $\beta n$ vertex-disjoint subgraphs of $H$ each of which is a $t$-star or a half-diamond.
Then there is an embedding $\phi$ of $T^{(k)}$ in $H$ such that every subgraph of $\mathcal{A}$ is immersed in $\phi(T^{(k)})$.
\end{lemma}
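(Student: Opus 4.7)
The plan is to build the embedding $\phi$ in three stages: (a) pick a well-separated \emph{skeleton} of sub-structures in $T$ matching $\mathcal{A}$, (b) pre-embed the skeleton into the gadgets, and (c) extend greedily.

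For (a), partition $\mathcal{A}$ into its $t$-stars and half-diamonds. Since $T$ has at least $\alpha n/\Delta$ vertices of degree $t$ and $|\mathcal{A}|\le \beta n \ll \alpha n/\Delta^4$, I greedily select one $t$-star of $T$ centered at a degree-$t$ vertex for each $t$-star gadget and one edge of $T$ for each half-diamond, making sure the chosen sub-structures lie at pairwise distance at least $3$ in $T$. For (b), pair each gadget with a sub-structure and pre-embed: centers and chosen $T$-neighbors map onto the anchor positions of the gadget (for half-diamonds, onto two non-center anchors), while the $k-2$ expansion vertices of each $k$-expansion fill the remaining positions (including the center of each half-diamond). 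By construction, every gadget of $\mathcal{A}$ is immersed in this partial embedding.

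For (c), I first apply \cref{res-lem} to extract a reservoir $R \subseteq V(H)\setminus V(\mathcal{A})$ of size $\delta n$ with the short-path property, then root $T$ inside the skeleton and extend the partial embedding in BFS order. When an edge $pw\in E(T)$ is processed with $\phi(p)$ fixed and $w$ not in the skeleton, I pick a $k$-edge of $H$ containing $\phi(p)$ and $k-1$ fresh vertices; since only $O(\alpha n)$ vertices have been used, the bound $\delta_1(H)\ge (2^{-k+1}+\eps)\binom{n-1}{k-1}$ (applied via \cref{degree}) leaves abundant choices. If both $p$ and $w$ lie in the same sub-structure, the $k$-edge is already embedded; and by the distance-$\ge 3$ separation, $p$ and $w$ never lie in two different sub-structures.

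The main obstacle is the \emph{boundary-entry} case, where $p$ is outside the skeleton but $w$ lies inside a pre-embedded sub-structure, so that both $\phi(p)$ and $\phi(w)$ are fixed and the $k$-edge for $pw$ must contain both together with $k-2$ fresh expansion vertices. For $d\ge 2$, the codegree bound $\delta_d(H) \ge \eps \binom{n-d}{k-d}$ directly supplies such $k$-edges with room to avoid used vertices. For $d=1$ the codegree can vanish, so I exploit the distance-$\ge 3$ separation and use a look-ahead: the BFS always traverses a complement vertex $p$ between any previously embedded region and the sub-structure boundary $w$, so when processing $p$'s edges I simultaneously choose $\phi(p)$ together with the $k-2$ expansion vertices of both incident $T$-edges $p'p$ and $pw$ (where $p'$ is $p$'s BFS-parent) from an $(\phi(p'),\phi(w))$-path of length $2$ through $R$ provided by \cref{res-lem}. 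This single reservoir look-up simultaneously embeds both $k$-edges and resolves the boundary. All remaining edges are handled greedily as above.
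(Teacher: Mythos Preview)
Your proposal is correct and follows essentially the same strategy as the paper: choose well-separated copies of the required sub-structures in $T$ (the paper takes an independent set in the $4$th power of $T$, giving centers at pairwise distance $\ge 5$, which is equivalent to your vertex-set distance $\ge 3$), pre-place them onto the gadgets, and then fill in the connecting pieces, using length-$2$ loose paths to bridge the $d=1$ boundary cases. The only organizational differences are that the paper processes the gadgets sequentially---growing a subtree $T_i$ by attaching the path $P_i$ and then $S_{x_i}$, with the distance bound forcing the connecting path $Q_i$ to have length $\ge 2$---whereas you pre-embed the whole skeleton and sweep with a BFS look-ahead; and the paper invokes \cref{lemma:shortpath} directly in the residual graph rather than setting aside a reservoir via \cref{res-lem}. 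If you keep the reservoir version, make explicit that the greedy steps avoid $R$ (or pick $\delta$ with $\alpha\gg\delta\gg\beta$), so that the $O(\beta n)$ look-aheads never exhaust it.
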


\begin{proof}
Set $m:=|\mathcal A|$, by assumption we have $m \le \beta n$.
Suppose that $\mathcal{A}$ contains $m_1$ many $t$-stars $S'_{y_i}$ with center vertex $y_i$ for $i\in [m_1]$; and also contains $m-m_1$ half-diamonds $S'_{y_i}$  with center vertex $y_i$ for $i\in [m]\setminus [m_1]$.
We will consecutively define subtrees $T_1 \subseteq \dotsb \subseteq T_m$ of $T$ and embeddings $\phi_1, \dots, \phi_m$ such that $\phi_i: T_i^{(k)}\to H$ is an embedding of $T_i^{(k)}$ in $H$ and $S'_{y_1}, \ldots, S'_{y_{i}}$ are immersed in $\phi_i(T^{(k)}_i)$ for each $i\in [m]$.

Let $U\subseteq V(T)$ be the vertices of degree $t$ in $T$.
By assumption, $|U|\ge \alpha n/\Delta$.
Let $T'$ be the $4$-power of $T$; which is the graph obtained from $T$ by adding an edge between every pair of vertices of distance at most $4$ in $T$.
Then $\Delta (T')\leq \sum^{4}_{i=1} \Delta^{i}$.
A greedy argument shows that the induced subgraph $T'[U]$ has an independent set $I$ of size at least $\frac{|U|}{\Delta (T')+1}\ge \alpha' n$, where $\alpha'$ is a constant with $\alpha\gg \alpha'\gg \beta$.
Note that, by construction, the vertices in $I$ have all degree $t$ and are pairwise at distance at least $5$ in $T$.

Let $x_0 \in I$ be arbitrary and consider the rooted tree $(T, x_0)$. 
We label the vertices in $I$ by $x_0, x_1, \dots, x_{|I|-1}$, using the order given by a breadth-first search in $T$ started in $x_0$.
In particular, the vertices $x_1, \dotsc, x_{|I|-1}$ are ordered by non-decreasing depth, i.e., distance from $x_0$.
Since $|I| \gg \beta n \geq m$ and $I \subseteq U$, we can find $m$ vertex-disjoint $t$-stars $S''_{x_i} \subseteq T$ with center vertex $x_i$, one for each $i\in [m]$.
Since vertices in $I$ have pairwise distance at least five, any two vertices from different stars $S''_{x_i}, S''_{x_j}$ must have distance at least three.
For each $i \in [m_1]$, let $S_{x_i} = S''_{x_i}$; and for $i \in [m] \setminus [m_1]$, let $S_{x_i}$ be the edge in $S''_{x_i}$ that joins $x_i$ with its parent in $(T, x_0)$.
By definition, $S^{(k)}_{x_1}, \dots, S^{(k)}_{x_m}$ are $m$ vertex-disjoint subgraphs of the $k$-expansion $T^{(k)}$ of $T$,  where each $S^{(k)}_{x_i}$ is a $t$-star for $i \in [m_1]$, or a single edge for $i \in [m] \setminus [m_1]$. 
For $j\in [m]\setminus [m_1]$, let $f_j$ be the expansion edge of $S^{(k)}_{x_j}$; and let $u_j\in f_j$ be an expansion vertex.
Clearly, $u_j\neq x_j$.

Define $T_0 := \{ x_0 \}$, and note that $T_0 \subseteq T$.
Given $i\in [m]$, we let $T_{i}$ be the union of $T_{i-1}$, the shortest path $P_{i}$ from $T_{i-1}$ to $x_i$, and $S_{x_{i}}$.
By construction, $S_{x_i}\subseteq T_{i}$ and $T_{i-1} \subset T_{i}$.
By our ordering of $x_1, \dotsc, x_{m}$ and the fact that vertices in $I$ have pairwise distance at least five; the length of each path $P_{i}$ is at least three.

Now we define our embeddings $\phi_1, \dotsc, \phi_m$.
Define $V(\mathcal A)=\bigcup^{m}_{i=1}V(S'_{y_i}) \subseteq V(H)$ as the set of vertices participating in structured that need to be immersed by our embedding.
Initially, let $y_0 \notin V(\mathcal{A})$ be arbitrary, and let $\phi_0(x_0) = y_0$,
this is an embedding of $T^{(k)}_0$.
Next, suppose $i \in [m]$ is given and that the embedding $\phi_{i-1}$ of $T^{(k)}_{i-1}$ has been defined.
We now define $\phi_{i}$ by extending $\phi_{i-1}$ as follows.

We first define the image of $S^{(k)}_{x_i}$.
If $i\in [m_1]$, set $\phi_{i}(x_{i})=y_{i}$, and for each $e \in E(S^{(k)}_{x_{i}})$, set $\phi_{i}(e)$ to be a distinct edge of $S'_{y_{i}}$.
On the other hand, if $i\in [m]\setminus [m_1]$ we set $\phi_{i}(u_{i})=y_{i}$ and $\phi_{i}(f_{i})=S'_{y_{i}}$.

To complete the definition of $\phi_i$, we need to specify the embedding of the vertices in $P^{(k)}_i$.
Recall that $P_i$ is a path from $T_{i-1}$ to $x_i$ with length at least three; so $P^{(k)}_{i}$ is a loose path from $T^{(k)}_{i-1}$ to $x_{i}$ with at least three edges.
Observe that by our definition of $S_{x_i}$, in any case we have that the last edge of $P^{(k)}_i$ belongs to $S^{(k)}_{x_i}$, so we have already defined its embedding.
Let $Q_i$ be the subpath of $P_i$ after removing its final edge that contains $x_i$; so $Q_i$ is a path of length $\ell \geq 2$.
Observe also that $Q_i$ has length at most $|V(T)| \leq \beta n$.
Then $Q^{(k)}_i$ is a loose path of length $\ell$ satisfying $2 \leq \ell \leq \beta n$.
Suppose $Q^{(k)}_{i} = v^{i}_1 v^{i}_2 \dotsb v^{i}_{(k-1)\ell+1}$, where $v^{i}_1\in V(T^{(k)}_{i-1})$ and $v^i_{(k-1)\ell+1} \in V(S^{(k)}_{x_i})$ is an anchor vertex of $T^{(k)}$.
Next, in $H$ we will find a loose path of length $\ell$ between $x := \phi_{i}(v^{i}_1)$ and $y := \phi_{i}(v^{i}_{(k-1)\ell+1})$ such that all interior vertices belong to $V(H) \setminus (V(\phi_{i}(T^{(k)}_i)) \cup V(\mathcal A))$.
Having such a path, we can map $Q^{(k)}_i$ to it, which will complete the definition of $\phi_{i}$. 

Let $H':= H-(V(\phi_{i}(T^{(k)}_i))\cup V(\mathcal A)) \setminus \{x, y\}$. 
When passing from $H$ to $H'$, we remove at most $|V(T^{(k)})|+|V(\mathcal{A})| \leq \alpha k n + \beta \Delta k n \leq \eps n/2k$ vertices, so we have that $H'$ satisfies
\[
\delta_d(H')\ge \delta_d(H) - \frac{\eps n}{2k}\binom{n-d-1}{k-d-1}\ge 
\begin{cases} 
(\frac{1}{2^{k-1}} + \frac{\eps}{2} )  \binom{n-d}{k-d} & \text{for } d=1, \\
\frac{\eps}{2} \binom{n-d}{k-d} & \text{for }  2\le d\le k-1.
\end{cases}
\]
When $2\le d\le k-1$, using the minimum $2$-degree condition of $H'$ and $\ell \leq \beta n$, we can always greedily find a loose path of length $\ell$ between $x$ and $y$ while avoiding the already used vertices.
For the $d=1$ case, choose a constant $\delta$ such that $\eps \gg \delta\gg \beta, \alpha$.
Recall that $\ell \geq 2$.
Using the $1$-degree condition $\delta_1(H') \geq \frac{\eps}{2} \binom{n-1}{d-1}$, we can greedily find a loose path of length $\ell-2$ starting at $x$, and ending at some vertex $x' \in V(H')$.
Such a path uses at most $\ell k \leq \beta k n$ vertices.
It only remains to find a loose $(x', y)$-path in $H'$ of length two, not using any of the vertices used so far in our path.
By \cref{lemma:shortpath}, there are at least $\delta |V(H')|^{2k-3}$ $(x', y)$-paths in $H'$ of length two.
Since $\delta\gg \beta, \alpha$, there is indeed an $(x', y)$-path avoiding the already used vertices, which gives the desired path.

Having finished the definition of $\phi_{i}$, note that $\phi_{i}$ is an embedding of $T^{(k)}_{i}$ into $H$, and $S'_{y_1},\dots, S'_{y_{i}}$ are immersed in $\phi_i(T^{(k)}_{i})$.
At the end of this process, we have an embedding $\phi_{m}: T^{(k)}_m \to H$ that immerses all the subgraph of $\mathcal A$.
We then extend $\phi_{m}$ to an embedding $\phi$ of $T^{(k)}$ in $H$ by embedding the extra edges as leaves, one by one. This is possible because at every step, the minimum degree of any vertex of $H$ into the set of unoccupied vertices is at least $\eps\binom{n-1}{k-1}/2$, so an embedding of the next edge can be chosen greedily.
\end{proof}

\section{Embedding spanning expansion trees with one even-degree vertex} \label{section:proof-even}

\begin{proof}[Proof of \cref{theorem:main-even}]
We choose constants $\gamma, \mu, \nu, \alpha, \rho_1, \rho_2, \beta, \delta, M$ so that
\begin{equation}\label{equa-3}
1/k, \eps, 1/\Delta \gg \gamma, \nu \gg \mu \gg 1/M, \beta \gg \rho_1 \gg \alpha  \gg \delta \gg  \rho_2\gg  \delta' \gg 1/n > 0. 
\end{equation}
Let $T$ be an $n$-vertex tree with $\Delta_1(T) \leq \Delta$ and at least one even-degree vertex.
Let $T^{(k)}$ be the $k$-expansion of $T$, and let $N := |V(T^{(k)})| =(k-1)n-k+2$.
Let $H$ be a $k$-graph on $N$ vertices with $\delta_{k-1}(H) \geq (1/3 + \eps)N$, and let $G$ be the $\gamma$-diamond graph of $H$. By \cref{lemma:diamondgraph-mindeg}, \cref{lemma:diamondgraph-indep} and \cref{lemma:diamonggraph-separation}, either $G$ is $\mu$-inseparable, or there is a $\mu$-separation $\{A, B\}$ of $V(G)$ such that both $G[A]$ and $G[B]$ are $\mu$-inseparable.

\subsection{The inseparable case}
We first address the case where the $\gamma$-diamond graph $G$ of $H$ is $\mu$-inseparable. The situation is relatively simple in this case, and we only provide a proof sketch since all necessary elements are similar to those in the proof of \Cref{theorem:main}.

Since $G$ is $\mu$-inseparable, \Cref{lemma:diamond-chains} implies that every vertex pair $u, v \in V(H)$ admits at least $\rho' N^{k \ell - 1}$ many $(u, v, \ell)$-diamond-chains in $H$, where $\ell \leq M$ is an integer and $\rho'$ is a constant satisfying $\mu \gg 1/\ell \gg \rho' \gg \rho_1$. 

Fix a $k$-set of distinct vertices $(w, w_1, \ldots, w_{k-1})$ of $H$. By \Cref{degree}, there are at least $(1/3 + 3\eps/4)\binom{N-1}{k-1}$ many edges $w v_1 \ldots v_{k-1}$ disjoint from $\{w_1, \ldots, w_{k-1}\}$. For each $i \in [k-1]$, we have that there is $\ell_i \leq M$ and at least $\rho' N^{k \ell_i - 1}$ many $(w_i, v_i, \ell_i)$-diamond-chains in $H$. 
Recall that $\mathcal{S}_{v_i}$ is the half of the $(w_i, v_i, \ell_i)$-diamond-chain. We continue to use the terminology from \Cref{section:proof-main} and say that an absorbing tuple for $(w, w_1, \ldots, w_{k-1})$ is a tuple $(\mathcal{S}_{v_1}, \ldots, \mathcal{S}_{v_{k-1}})$ satisfying those halves are vertex-disjoint and $w  v_1 \ldots v_{k-1} \in E(H)$.
Let $A_m(w, w_1, \ldots, w_{k-1})$ denote the set of all $m$-vertex absorbing tuples for $(w, w_1, \ldots, w_{k-1})$; and let $A_m(H)$ denote the union of $A_m(w, w_1, \ldots, w_{k-1})$ for all $k$-sets $(w, w_1, \ldots, w_{k-1})$ of distinct vertices of $V(H)$. 
By our discussion above, for each $k$-tuple $(w, w_1, \dotsc, w_{k-1})$ there is some $1 \leq m \leq k^2M-1$ such that there are at least $(\rho'/2)n^m$ many absorbing tuples in $A_m(w, w_1, \dotsc, w_{k-1})$.
Let $A(H) = \bigcup_{1 \leq m \leq k^2M-1} A_m(H)$.

Using a similar probabilistic argument as in the proof of \Cref{lem:counting-absorb}, we can get a set $\mathcal{A} \subseteq A(H)$ such that $\mathcal{A}$ contains at most $\beta N$ pairwise vertex-disjoint absorbing tuples and $|A(w, w_1, \ldots, w_{k-1}) \cap \mathcal{A}| \geq \rho_1 N$, for each choice of $(w, w_1, \dotsc, w_{k-1})$. Similarly, we can find a small subtree $T(u)$ such that $T^{(k)}(u)$ has size at most $\Delta \nu N$.
Note that $\mathcal{A}$ consists of vertex-disjoint half-diamonds. 
By \Cref{lemma:immersion}, there is an embedding $\phi_0: T^{(k)}(u) \to H$ such that every element in $\mathcal{A}$ is immersed in $\phi_0(T^{(k)}(u))$. Suppose that $\phi_0(u) = v$. Let $T_0^{(k)} := T^{(k)} - V(T^{(k)}(u)) \setminus \{u\}$ and $H_0 := H - V(\phi(T^{(k)}(u))) \setminus \{v\}$. 
Clearly, $\delta_{k-1}(H_0) \geq (1/3 + \eps/2)|V(H_0)|$.
Remove leaves repeatedly from rooted $k$-expansion hypertree $(T_0^{(k)}, u)$ to get a subtree $T_1^{(k)}$ such that $|V(T_0^{(k)})| - |V(T_1^{(k)})| =\delta N$.
By~\cref{almost-cover}, there is an embedding $\phi: T_1^{(k)} \to H_0$ such that $\phi (u) = v$. 
Finally, similar to \Cref{absorb-lem}, we can absorb the remaining vertices (the only difference being that we switch all the diamonds of each $(w_i, v_i, \ell)$-diamond-chain, rather than a single diamond as in \Cref{absorb-lem}).
Therefore, we can get an embedding from $T^{(k)}$ to $H$. 

\subsection{The separable case}
Now assume that the $\gamma$-diamond graph $G$ of $H$ is $\mu$-separable, i.e., there is a partition $\{A, B\}$ of $V(G)$ such that $G[A]$ and $G[B]$ are $\mu$-inseparable.
We will show that $T^{(k)} \subseteq H$.
    \medskip 

    \noindent \emph{Step 1: Partitioning the tree.}
    We begin by carefully dividing the tree $T$ into parts, considering its even-degree vertices.
    Let $u' \in V(T)$ be any even-degree vertex (such a vertex must exist by assumption).
    Consider the rooted tree $(T,u')$.
    Since $\Delta_1(T) \leq \Delta$, there must exist a children $c'$ of $u'$ such that $|V(T(c'))| \geq (|V(T)|-1)/\Delta \geq 2 \Delta \nu n$.
    Now, let $v \in V(T(c'))$ be a maximal-depth vertex such that $\nu n \le |V(T(v))|$.
    The choice of $v$ ensures that \[ \nu n \leq |V(T(v))| \leq \Delta \nu n.\]
    This also implies that $c' \neq v$, since $|V(T(c'))| \geq 2 \Delta \nu n > |V(T(v))|$.

    Let $x$ be the parent of $v$.
    By our discussion above, $x \neq u'$.
    Let $T_0 := T(v)$.
    Let $u$ be the even-degree vertex in the $(u',x)$-path in $T$, excluding $x$, that is closest to $v$ in $T$ (it is of course possible that $u = u'$).
    Now we root $T$ at $u$.
    Let $t^\star$ be the degree of $u$.
    Note that $t^\ast$ is even and at most $\Delta$, by assumption.
    Let $c_1, \dotsc, c_{t^\star}$ be the children of $u$.
    Without loss of generality, we can assume that $v \in V(T(c_1))$.
    This implies that $T_0 \subseteq T(c_1)$.
    Note that our choice of $u$ implies that $v \neq c_1$ and $x \in V(T(c_1))$.
    
    For each $i \in [t^\star] \setminus \{1\}$, let $T_i = T(c_i)$; and let $T_1 = T(c_1) - V(T_0)$.
    Since $v \neq c_1$ and $x \in V(T(c_1)) \setminus V(T_0)$, we have that $V(T_1)$ is non-empty.
    
    Next, we will select a subtree $T_{\mathrm{main}} \subseteq T - V(T_0)$ such that
    \stepcounter{propcounter}
    \begin{enumerate}[label = {{\rm (\Alph{propcounter}\arabic{enumi})}}]
        \item contains $u$ and $x$;
        \item contains $c_1, \dotsc, c_{t^\star}$; and
        \item has $(1 - \delta)n - |V(T_0)|$ vertices.
    \end{enumerate}
    We find such a subtree as follows.
    If there exists some $i \in [t^\star] \setminus \{1\}$ such that $|V(T_i)| > \delta n$, we can obtain $T_{\mathrm{main}}$ from $T - V(T_0)$ by iteratively removing $\delta n$ leaves inside $T_i$.
    Otherwise, we can assume that $|V(T_i)| \leq \delta n$ for all $i \in [t^\star] \setminus \{1\}$.
    Hence, \[|V(T_1)| = n - 1 - |V(T_0)| - \sum_{i=2}^{t^\star} |V(T_i)| \geq n - \Delta \nu n - \Delta \delta n \geq 2n/3.\]    
    Let $P = v_0 v_1 \dotsb v_t$ be the $(x,u)$-path in $T$, with $v_0 = x$ and $v_t = u$ (since $x \neq u$, we have $t \geq 1$).
    If $V(T_1) \setminus V(P)$ has at least $\delta n$ vertices, we can find a $T_{\mathrm{main}}$ again by removing iteratively leaves from $V(T_1) \setminus V(P)$.
    Assuming $|V(T_1) \setminus V(P)| < \delta n$, together with the lower bound on $|V(T_1)|$, this implies that $t+1 = |V(P)| \geq 2n/3 - \delta n - 2 \geq 3 \delta n$.
    Now recall that by our choice of $x$ and $u$, for every $j \in [t-1]$ we have that the degree of $v_j$ in $T$ is at least three.
    Hence, for each $j \in [t-1]$ there exists $u_j \in N_T(v_j) \setminus V(P)$.
    Then we have that
    \[ |V(T_1) \setminus V(P)| \geq t-1 \geq 3 \delta n - 2 > \delta n, \]
    a contradiction; thus we can always find the desired subtree $T_{\mathrm{main}}$.
    \medskip

    \noindent \emph{Step 2: Using the even-degree vertex.}
    For each $i \in \{0\} \cup [t^\star]$, let $T^{(k)}_i$ be the $k$-expansion of $T_i$.
    For each $i \in [t^\star]$, let $e_i \in E(T^{(k)})$ be the edge containing $u$ and $c_i$.
    Note that $x \in V(T_1)$ is the unique neighbor of $v$ in $T_1$. Let $e_0 \in E(T^{(k)})$ be the edge containing $v$ and $x$.
    Note that the $k$-expansion trees $T^{(k)}_i + e_i$ form an edge-decomposition of $T^{(k)}$, where $i\in \{0\}\cup [t^\star]$.

We first choose a reservoir. 
By \cref{res-lem}, there exists a subset $R \subseteq V(H)$ of size $\delta' N$ such that for every pair $x, y \in V(H)$, we have $\deg_H (\{x, y\}, R) \ge \delta' (\delta' N)^{k-2} / 2$.
Let $H_0 := H - R$ and $|V(H_0)|:=N_0$. The choice $\eps\gg \delta'$ ensures that $H_0$ satisfies
\[\delta_{k-1}(H_0) \geq \left(\frac{1}{3} + \frac{3\eps}{4} \right)N_0.\] 
Choose $M\in \mathbb N$ such that $\mu, 1/\Delta \gg 1/M \gg 1/n$.
By \Cref{lemma:fewdiamonds-pairbalancer} with $t = t^\star$ (using the `moreover' part) there are vertices $a \in A$, $b\in B$, and an $(\{a\}, \{b\}, t^\star)$-gadget $L$ in $H_0$, with at most $M$ vertices, and such that $L$ contains exactly one $t^\star$-balancer.
    Let $J$ be the only $t^\star$-balancer of $L$, with anchor vertices $\{y_1, \dotsc, y_{t^\star}\}$ and $\{x_1, x_2\}$; and let $J_{i} \subseteq J$ be the half of $J$ containing $x_i$ for $i\in [2]$. 
    For each $i \in [2]$, let $L_i \subseteq L$ be isomorphic subgraphs as in the definition of $(\{x_1\}, \{x_2\}, t^\star)$-gadget, such that $J_i \subseteq L_i$.
    
 We define a partial embedding $\psi_1: \bigcup_{i \in [t^\star]} e_i \rightarrow H$  by setting $\psi_1(u) = x_1$ and $\psi_1(c_i) = y_i$ for all $i \in [t^\star]$, and arbitrarily mapping the vertices in $e_i \setminus \{u, c_i\}$ to the edge of $J_1$ containing $y_i$.
    Note that $J_1$ is immersed in the image of $\psi_1$.
    Finally, let $\mathcal{A}_0$ be the family of all half-diamonds in $L_1$, and
    $U_0 = \{ x_2 \}$. 
    \medskip 

\noindent \emph{Step 3: Finding parity-absorbers and balancers.}
    Recall that $\nu n \leq |V(T_0)| \leq \Delta \nu n$.
    By the pigeonhole principle, there must exist $t \in [\Delta]$ such that there are at least $\nu n/\Delta$ vertices of degree $t$ in $T_0$.
    Let $t$ be fixed.

    By \cref{Parity-absorbers}, there exists $\pi: V(H) \rightarrow \{0,1\}$ such that for every $w \in V(H)$, and every $(k-1)$-set $S \subseteq V(H)$ with $|S \cap A| \equiv \pi (w) \bmod 2$, there exists $m \leq M$ such that $|\mathcal{A}^{(t)}_m(w, S)| \geq \rho N^m$. Since $\rho\gg \delta'$, we can assume that $V(\mathcal{A}^{(t)}_m(w, S))\cap R=\emptyset $ for each $w$ and $S$.

    Again, by a probabilistic argument as in \Cref{lem:counting-absorb}, we can select a suitable family of pairwise-disjoint absorbers.
    We spell out the details for completeness.
    Let $1 \leq m \leq M$ be fixed.
    Select a family $\mathcal{A}'_{1,m}$ from the union of $\mathcal{A}^{(t)}_m(w, S)$ for all $w \in V(H)$ and $(k-1)$-sets $S \subseteq V(H)$ at random independently by including each $m$-set with probability $ \rho_1^2 N^{1-m}$.
    For a vertex $w \in V(H)$ and a $(k-1)$-set $ S \subseteq V(H)$, if $|\mathcal{A}^{(t)}_m(w, S)| \geq \rho_1 N^m$ then $\mathbb{E}[|\mathcal{A}^{(t)}_m(w, S)\cap \mathcal A'_{1,m}|]\ge\rho_1^3 N $.
   Moreover, $\mathbb{E}[|\mathcal{A}'_{1,m}|]\le \rho_1^2 N^{1-m}N^m = \rho_1^2 N$ and $\mathbb{E}[|(A_1, A_2): A_1 \text{~and~} A_2\ \text{in}\ \mathcal{A}'_{1,m} \text{ are intersecting}|]\le N^{2m-1}(\rho_1^2N^{1-m})^{2}=\rho_1^4m^2N$.
For each intersecting pair in $\mathcal{A}'_{1,m}$, remove one of its participating tuples.
By Chernoff’s inequality, Markov's inequality and a union bound, we 
    can get a subset $\mathcal A_{1,m}\subseteq \mathcal A'_{1,m}$ of size at most $\rho_1^3 N$ consisting of vertex-disjoint parity-absorbers, and $|\mathcal{A}^{(t)}_m(w, S) \cap\mathcal{A}_{1,m}|\geq \rho_1^4 N$,
    for each $w\in V(H)$ and $(k-1)$-set $S \subseteq V(H)$ with $|\mathcal{A}^{(t)}_m(w, S)| \geq \rho_1 N^m$.
We do this for each $1 \leq m \leq M$, obtaining vertex-disjoint families each time, and take the union $\mathcal{A}_1$ of all the obtained families $\mathcal{A}_{1,m}$.
The family $\mathcal{A}_1$ has at most $M \rho_1^2 N$ tuples, and for every $w\in V(H)$ and $(k-1)$-set $S \subseteq V(H)$ there exist some $1 \leq m \leq M$ such that 
\[
|\mathcal{A}^{(t)}_m(w, S) \cap\mathcal{A}_{1}|\geq \rho_1^4 N.
\] 
We can also assume that each element of $\mathcal A_1$ is vertex-disjoint from $\mathcal{A}_0$ and $U_0$.

Now, say that a $t$-gadget $L \subseteq H_0$ is an \emph{even-balancer} if there exist $Y_1 \subseteq A$, $Y_2 \subseteq B$ of size $2$, such that $L$ is a $(Y_1, Y_2, t)$-gadget.
Applying \Cref{lemma:fewdiamonds-pairbalancer} to $H_0$,  there is a family $\mathcal{A}_2$ of at least $2\rho_2 N$ pairwise vertex-disjoint even-balancers, disjoint from $\mathcal{A}_0 \cup \mathcal{A}_1 \cup U_0$.
   For each even-balancer $L$, let $L_1, L_2$ be isomorphic $k$-graphs such that $L$ is the edge-disjoint union of $L_1$ and $L_2$.
    By assumption, for each $L \in \mathcal{A}_2$ we have that there are $Y_1 \subseteq A$, $Y_2 \subseteq B$ of size $2$ each, such that $Y_i = V(L) \setminus V(L_{3-i})$, for $i \in [2]$.
    Now partition $\mathcal{A}_2$ into two parts $\mathcal{A}_2^1$ and $\mathcal{A}_2^2$ of size at least $\rho_2 N$ each.
    Let $\mathcal{A}'_2$ consist of $L_1$ for each $L \in \mathcal{A}_2^1$, and let $\mathcal{A}''_2$ consist of $L_2$ for each $L \in \mathcal{A}_2^2$. 
    Let $\mathcal{A} := \mathcal{A}_0 \cup \mathcal{A}_1 \cup \mathcal{A}_2'\cup \mathcal{A}_2''$.
    
    Apply \Cref{lemma:immersion} to embed $T^{(k)}_0$ with a partial embedding $\psi_2$ of $T^{(k)}$, in such a way that $\mathcal{A}$ is immersed in $\psi_2(T^{(k)}_0)$.
    Let $U_1$ be the set of uncovered vertices in the balancers of $\mathcal{A}_2$
    (namely, $U_1$ contains $Y_2$ for all $L \in \mathcal{A}^1_2$, and $Y_1$ for $L \in \mathcal{A}^2_2$).
    \medskip 

    \noindent \emph{Step 4: Embedding an almost spanning tree.}
    Let $T'(c_i)=T_{\mathrm{main}}^{(k)}\cap T^{(k)}(c_i)$ for $i\in[t^\star].$  
    Let $H_1:= H_0 - V(\psi_1(\bigcup_{i \in [t^\star]} e_i)) \cup V(\psi_2(T_0^{(k)}))  \cup U_0 \cup U_1$ and $|V(H_1)| := N_1$. By (\ref{equa-3}), we have $\delta_{k-1}(H_1) \geq (1/3 + \eps/2) N_1$.
    If $i \in [t^\star]$ is such that $|V(T'(c_i))| \leq \eps N_1/(2\Delta)$, then we can extend the embedding $\psi_1$ greedily to embed $T'(c_i)$ in $H_1$. Indeed, there are at most $\Delta-1$ many $i \in [t^\star]$ such that $|V(T'(c_i))| \leq \eps N_1/(2\Delta)$, so the resulting subgraph $H_2$ of $H_1$ has codegree at least $(1/3 + \eps/(2\Delta))|V(H_2)|$. Let $t^\star_1$ denote the number of subtrees satisfying $|V(T'(c_i))| > \eps N_1/(2\Delta)$ and let $|V(H_2)|:= N_2$.
    In the following, we partition the vertices of $H_2$ into $t_1^\star$ vertex sets, where each set has a size approximately equal to the size of the subtree $T'(c_i)$ for $i \in [t_1^\star]$.

    Note that $\psi_0(c_i)=y_i$ and $\deg_{H_2} (y_i) \geq (1/3 + \eps/(2\Delta))N_2$ for each $i\in[t_1^\star]$.
    We randomly partition $V(H_2)$ into $t_1^\star$ subsets $V_i$ satisfying $|V_i| = |V(T'(c_i))| + \delta N_2/(2\Delta)$.
    Using concentration inequalities and a union bound, we find a partition $\mathcal P=\{V_1, \dots, V_{t_1^{\star}}\}$ such that $\delta_{k-1}(H_2[V_i]) \geq (1/3 + \eps/(4\Delta))|V_i|$; and moreover $\deg_{H_2[V_i]} (y_i) \geq (1/3 + \eps/(4\Delta))|V_i|$ for each $i \in [t^\ast_1]$.
    Using \cref{almost-cover}, we can embed $T'(c_i)$ to $H[V_i]$ for each $i \in [t^\star_1]$ and ensure that $c_i$ is embedded to $y_i$. So we finish the embedding of $T^{(k)}_{\mathrm{main}}$ and denote the embedding by $\psi$.
    The embedding $\psi$ is obtained by extending the embedding $\psi_0$.
    
    Now, we choose $k-2$ vertices in the reservoir $R$ to connect $\psi_1(v)\in V(\psi_1(T_0^{(k)}))$ and $\psi(x)\in V(\psi(T^{(k)}_{\mathrm{main}}))$. Note that \[\deg_{H}(\{\psi_1(v), \psi(x)\}, R) \geq \delta' (\delta'N)^{k-2}/2.\] Then we find an edge to join the embedding of $T^{(k)}_{\mathrm{main}}$ to the embedding of $T_0^{(k)}$. Denote the total embedding by $\phi$.
    Let $U_2 \subseteq V(H) \setminus U_0$ be the set of uncovered vertices after this embedding.
    Note that $U_1\subseteq U_2$ and $|U_0 \cup U_2|$ must be divisible by $k-1$.
    \medskip
    
 \noindent \emph{Step 5: Balance the leftover.}
   For any even-balancer $L $ in  $\mathcal{A}_2^1$, we can switch two vertices in the leftover from $A$ to $B$  
  by changing the image of $ \psi_2^{-1}(L_1)$ from $L_1$ to $ L_2$. Similarly,  we can switch two vertices in the leftover from $B$ to $A$ using any even-balancer in  $\mathcal{A}_2^2$.
  In order to balance the leftover $U_2$, we use even-balancers in $\mathcal{A}_2$ iteratively. If $|U_2 \cap A| - |U_2 \cap B| >2$, then we use an even-balancer in  $\mathcal{A}_2^1$. If $|U_2 \cap B| - |U_2 \cap A| >2$, then we use an even-balancer in  $\mathcal{A}_2^2$.
  We stop the iteration when $|U_2 \cap A| = |U_2 \cap B| \pm 2$.
  Abusing notation, we let $\phi$ be the embedding of the $k$-expansion of $T_{\mathrm{main}} \cup T_0$ after applying those switchings.

    Let $U$ be the set of uncovered vertices after this process, and let $U' = U \setminus U_0$.
    Note that $|U|$ is divisible by $k-1$, and by construction we have that
    \[ |U' \cap A| = |U' \cap B| \pm 2, \]
    and \[ |U' \cup U_0| = 1 + |U_2|  \leq 1 + \delta n + 2 \rho_2 n \leq \alpha n/2.\]
    
    \noindent \emph{Step 6: Using the absorbers.}
    To finish the embedding we will extend $\phi$ by adding one $k$-edge at a time.
    Let $r$ be such that $r(k-1) = |U|$.
    Let $T'_0, T'_1, \dotsc, T'_r$ be a sequence of $k$-expansion hypertrees such that $T'_0$ is the $k$-expansion of $T_{\mathrm{main}} \cup T_0$ and $T'_r = T^{(k)}$, and for each $i \in [r]$, $T'_i$ is obtained from $T'_{i-1}$ by adding a leaf edge, which we call $f_i$.

    Let $c(k) \in \{0,1\}$ be such that $c(k) \equiv (k-1) \bmod 2$.
    We will inductively build a family of partial embeddings $\phi_0, \dotsc, \phi_{r}$ and decreasing families of absorbers $\mathcal{A}^*_0 \supseteq \mathcal{A}^*_1 \supseteq \dotsb \supseteq \mathcal{A}^*_r$ such that, for each $i \in \{0\} \cup [r]$, 
    \stepcounter{propcounter}
    \begin{enumerate}[label = {{\rm (\Alph{propcounter}\arabic{enumi})}}]
        \item \label{item:ABabsorption-phii} $\phi_i$ is an embedding of $T'_i$,
        \item \label{item:ABabsorption-balanced} for $U'_i := U' \setminus \phi_i(V(T'_i))$, we have $|U'_i \cap A| = |U'_i \cap B| \pm (2 + c(k))$,
        \item \label{item:ABabsorption-absorbers} each $(k-1)$-set has at least $ \rho_1^4 N - i$ absorbers in $\mathcal{A}^*_i$, and all of them are immersed by $\phi_i(T'_i)$,
        \item \label{item:ABabsorption-L0} if $i < r$, $U_0 \cap \phi_i(V(T'_i)) = \emptyset$.
    \end{enumerate}
    For $i = 0$ we take $\phi_0 = \phi$ and $\mathcal{A}^*_0 = \mathcal{A}_1$.
    Assuming $i \in [r-1]$ and that we have constructed $\phi_{i-1}$ and $\mathcal{A}^*_{i-1}$, we describe the construction of $\phi_{i}$ and $\mathcal{A}^*_{i}$ as follows.
    Recall that $U'_{i-1} = U' \setminus \phi_{i-1}(V(T'_{i-1}))$ is the current leftover that we can use.
    Note that, since $i < r$, we have that $|U'_{i-1}| = (r-i+1)(k-1)-1 \geq 2(k-1)-1 = 2k-3$ vertices.
    This bound, and \ref{item:ABabsorption-balanced}, imply that \[\min\{|U'_{i-1} \cap A|, |U'_{i-1} \cap B|\} \geq k-2 - c(k), \quad \max \{|U'_{i-1} \cap A|, |U'_{i-1} \cap B|\} \geq k-1.\]
    
    Let $v_i$ be the unique vertex in $V(T'_{i-1}) \cap f_i$ and $w_i = \phi_{i-1}(v_i)$.
    Now, we choose a $(k-1)$-set $Z \subseteq U'_{i-1}$ such that $|Z \cap A| \equiv \pi(w_i) \bmod 2$ and $|(U'_{i-1}\setminus Z)\cap A| = |(U'_{i-1}\setminus Z) \cap B| \pm 2+c(k)$, as follows.
    The idea is always to keep the leftover as balanced as possible in the vertices of $A$ and $B$; selecting more vertices from $A$ if there are more available vertices in $A$, and vice versa.
    More precisely, if $|U'_{i-1} \cap A| \leq |U'_{i-1} \cap B|$, let $z_a \leq (k - 1)/2$ be the maximal integer such that $z_a \equiv \pi(w_i) \bmod 2$; and let $Z_A \subseteq U'_{i-1} \cap A$ and $Z_B \subseteq U'_{i-1} \cap B$ be any sets of size $z_a$ and $k-1 - z_a$, respectively, and let $Z = Z_A \cup Z_B$.
    Note that the choice of $z_a$ implies $z_a \leq (k - 1 - c(k))/2$.
    Using $k \geq 3$ we can see this choice of $Z_A, Z_B$ is possible, since $z_a \leq (k-1 - c(k))/2 \leq k-2 - c(k) \leq |U'_{i-1} \cap A|$ and $k-1 - z_a \leq k-1 \leq |U'_{i-1} \cap B|$.
    Note that  $|U'_{i-1}\cap B|-|U'_{i-1}\cap A| \le 2+c(k)$ and $ |Z_B|-|Z_A|\le 2+c(k)$. So  $|(U'_{i-1}\setminus Z)\cap B| -|(U'_{i-1}\setminus Z) \cap A| \le2+c(k)$. 
    If $|U'_{i-1} \cap A| > |U'_{i-1} \cap B|$, a similar choice works, now choosing $z_a \geq (k-1)/2$ to be minimal such that $z_a \equiv \pi(w_i) \bmod 2$.
    This ensures that $z_a \geq (k-1+c(k))/2$, so $z_a \leq k-1 \leq |U'_{i-1} \cap A|$ and $k-1 - z_a \leq (k-1-c(k))/2 \leq k-2 \leq |U'_{i-1} \cap B|$.
 Thus  $|(U'_{i-1}\setminus Z)\cap A| -|(U'_{i-1}\setminus Z) \cap B| \le2+c(k)$. Observe that, in any case, this choice satisfies $|(U'_{i-1}\setminus Z)\cap A| = |(U'_{i-1}\setminus Z) \cap B| \pm 2+c(k)$. 

Next, since $|Z \cap A| \equiv \pi(w_i) \bmod 2$, there must exist a parity-absorber for $(w_i, Z)$ in $\mathcal{A}^*_{i-1}$.
Let $S\in N_H(w_i)$ and $L$ be a $t$-absorbing  $(S, Z, t)$-gadget for $(w_i, Z)$ consisting of two edge-disjoint isomorphic $k$-graphs $I^{L}_1$ and $I^{L}_2$ such that $S\subseteq V(I^{L}_2)$
and $I^{L}_2\in \mathcal{A}^*_{i-1}$ is a parity-absorber for $(w_i, Z)$.
Now we define $\phi_{i}$ from $\phi_{i-1}$ by setting $\phi_{i}(\phi_{i-1}^{-1}(I^{L}_2))=I^{L}_1$ and $\phi_{i}(f_i)=S\cup\{w_i\}$.
By construction, $\phi_{i}$ is an embedding of $\smash{T'_{i}}$ satisfying \ref{item:ABabsorption-phii} and \ref{item:ABabsorption-L0}.
For $U'_i := U' \setminus \phi_i(V(T'_i))=U'_{i-1}\setminus Z$, we have $|U'_i \cap A| = |U'_i \cap B| \pm 2 + c(k)$.
Let $\mathcal{A}^*_i:=\mathcal{A}^*_{i-1}\setminus\{I^{L}_2\}$.
So each $(k-1)$-set has at least $ \rho_1^4 N  - i$ absorbers in $\mathcal{A}^*_i$, and all of them are immersed by $\phi_i(T'_i)$.
    \medskip
   
    \noindent \emph{Step 7: The last absorbing step.}
    Finally, for $i=r$, assume that we have $\phi_{r-1}$ and $\mathcal{A}^*_{r-1}$ satisfying \ref{item:ABabsorption-phii}--\ref{item:ABabsorption-L0} for $i=r-1$.
    In this case, the leftover $U'_{r-1} \cup U_0$ has exactly $k-1$ vertices.
    Let $v_r$ be the unique vertex in $V(T'_{r-1}) \cap f_r$, and let $w_r = \phi_{i-1}(v_r)$.
    If $|(U'_{r-1} \cup U_0) \cap A|  \equiv \pi(w_r) \bmod 2$, then there is a parity-absorber for $(w_r, U'_r)$ and we are done.
    Otherwise, we can perform a switch using the $(\{x_1\}, \{x_2\}, t^\star)$-gadget $L$, this changes the embedding of $T'_{r-1}$ to liberate $x_1$ and to cover $U_0 = \{x_2\}$.
    Then the new leftover $U'' := (U'_{r-1} \setminus U_0) \cup \{x_1\}$ must satisfy $|U'' \cap A| \equiv \pi(w_r) \bmod 2$, so there is a parity-absorber for $(w_r, U'')$, which we can use to conclude.
\end{proof}

\section{Short proofs} \label{section:short}
\begin{proof}[Proof of \Cref{corollary:almost}]
    If $|V(T^{(k)})| < \eps n$, we can embed $T^{(k)}$ greedily by adding one edge at a time.
    Thus, we can assume that $|V(T^{(k)})| \geq \eps n$.
    Let $U \subseteq V(H)$ be a random subset of $|V(T^{(k)})| + k-1$ vertices.
    By our assumptions, we have that $\eps n \leq |U| \leq n$.
    Hence, standard concentration inequalities ensure that, with non-zero probability, $H[U]$ has minimum codegree at least $(1/3 + \eps/2)|U|$, and we fix such a set $U$ from now on.
    Now, let $v \in V(T)$ be any odd-degree vertex (e.g., a leaf).
    Let $T'$ be obtained from $T$ by adding a leaf to $v$.
    Then $T'$ has an even-degree vertex, and the $k$-expansion of $T'$ has exactly $|U|$ vertices.
    Then \cref{theorem:main-even} implies that $H[U]$ contains the $k$-expansion of $T'$, which in turn implies (since $T \subseteq T'$) that $H$ contains $T^{(k)}$.
\end{proof}

\begin{proof}[Proof of \Cref{theorem:nlogn}]
    Let $1/k, \eps \gg c \gg 1/n$.
    Let $T$ be an $n$-vertex tree with $\Delta_1(T) \leq c n / \log n$.
    Let $N = n + (k-2)(n-1)$ and let $H$ be an $N$-vertex $k$-graph with $\delta_{k-1}(H) \geq (1/2 + \eps)N$.
    It suffices to show that $T^{(k)} \subseteq H$.
    Let $U \subseteq V(H)$ be a random subset of $n$ vertices.
    For any $(k-1)$-set $X \subseteq V(H)$, we have that $\deg_H(X, U)$ is a random variable with expectation at least $(1/2 + \eps/2)n$.
    Standard probabilistic arguments imply that we can assume $U$ is such that for every $(k-1)$-set $X$, $\deg_H(X, U) \geq (1/2 + \eps/3)n$.

    Let $\{ Y_1, \dotsc, Y_{n-1} \}$ be a partition of $V(H) \setminus U$ into $n-1$ sets of size $k-2$ each.
    For $i \in [n-1]$, let $G_i$ be a graph on $U$ such that $u_1 u_2 \in E(G_i)$ if and only if $\{u_1, u_2\} \cup Y_i \in E(H)$.
    By our assumption on $U$, we have that $\delta(G_i) \geq (1/2 + \eps/3)n$ for each $i \in [n-1]$.
    Hence there is a `transversal' copy of $T$ in $\bigcup_{i \in [n-1]} G_i$, i.e., a copy of $T$ that uses exactly one edge from each $G_i$, by \cite[Theorem 1.1]{Montgomery-Muyesser-Pehova-2022}.
    By replacing edge $u_1 u_2$ in $G_i$ with the edge $\{u_1, u_2\} \cup Y_i \in E(H)$, we obtain a $k$-expansion of $T$ in $H$.
\end{proof}

\section{Concluding remarks} \label{section:conclusion}

\subsection{Loose trees}
It would be interesting to see if our techniques allow for the study of loose trees beyond $k$-expansion hypertrees.
Recall from the introduction that loose hypertrees can contain perfect matchings, so the thresholds in each case have to be above the ones ensuring the existence of perfect matchings.
Pehova and Petrova also conjectured that, in fact, a $d$-degree condition ensuring perfect matchings is also enough to embed all spanning bounded-degree loose trees~\cite[Conjecture 6.1]{Pehova-Petrova-24}.
The conjecture is true in the codegree case, i.e., $d = k-1$, by the main result of~\cite{Pavez-Matias-Nicolas-Stein-24}.
Recently, Chen and Lo~\cite{Chen-Lo-2025} obtained a set of technical conditions that ensure the existence of all bounded-degree spanning loose trees; in particular, their work verifies the conjecture of Pehova and Petrova for the case $d = k-2$.

\subsection{Loose Hamilton cycle thresholds} \label{section:conclusion-loosecycle}
Let us discuss further the connection between $\deltaLHC{k}{d}$ and the analogous threshold $\smash{\delta^{\mathrm{LHP}}_{k,d}}$ for the existence of loose Hamilton paths.
Those two thresholds coincide: this has not been proven formally in the literature but follows from known techniques.
One of the main insights from~\cite{Lang-Sanhueza-2024, Lang-Sanhueza-2022} is that tight Hamilton cycles can be embedded whenever the host hypergraph satisfies three key properties: \begin{enumerate*}
    \item it has perfect fractional matchings,
    \item it is suitably connected, and
    \item has `odd cycles' (or, equivalently, we have `lattice-completeness', i.e., no parity obstructions).
\end{enumerate*}
Moreover, those properties must be satisfied `robustly'.
The `robustness' is satisfied when the hypergraph is above the minimum degree thresholds.

From this point of view, and the fact that known lower bounds on $\deltaLHC{k}{d},\smash{\delta^{\mathrm{LHP}}_{k,d}}$ readily give (loose) connectivity, and the fact that paths and cycles are essentially equivalent for the verification of fractional matchings, the proof boils down to the verification of lattice-completeness.
In fact, Alvarado, Kohayakawa, Lang, Mota and Stagni~\cite[Theorem 9.3]{Alvarado-Kohayakawa-Lang-Mota-Stagni-2023} formally showed that $\deltaLHC{k}{d} \geq \smash{\delta^{\mathrm{LHP}}_{k,d}}$. Further details will appear in upcoming work \cite{Lang-Sanhueza-2025}, where the results of~\cite{Lang-Sanhueza-2024,Lang-Sanhueza-2022} for tight cycles are recovered for other types of cycles.

An interesting problem is to decide if $\deltaLHC{k}{d} \leq 1/2$ holds for all $k > d \geq 1$.
Again, trying to use the aforementioned framework, we can get connectivity and lattice-completeness easily in this setting.
Hence, a solution to the problem boils down to a question of `space' (i.e., perfect fractional matchings).
Note that a positive answer to this question, together with \Cref{theorem:main}, would imply the closed formula $\deltaEST{k}{d} = 1/2$ for all $k >d \geq 1$.

\subsection{Expansions of trees with at least one even-degree vertex}
It would be interesting to generalize \Cref{theorem:main-even} beyond codegree conditions, i.e., finding optimal minimum $d$-degree conditions to embed spanning $k$-expansion hypertrees whenever the original tree has at least one even-degree vertex.
In general, $\delta_d(H) \geq (1/3 + o(1))\binom{n-d}{k-d}$ is necessary, as shown by \Cref{theorem:construct-q} with $q=3$. Another lower bound is given by the asymptotic loose Hamilton cycle thresholds, so in any case we need $\delta_d(H) \geq (\max\{1/3, \deltaLHC{k}{d}\} + o(1))\binom{n-d}{k-d}$.
We believe this lower bound should reflect the truth.
In the smallest open case, we have $k=3$ and $d=1$, where $\deltaLHC{3}{1} = 7/16 > 1/3$ holds.

\subsection*{Acknowledgments}
The second author is extremely thankful to Guanghui Wang and his group at Shandong University for their extraordinary hospitality.
M. Rao was supported by the China Postdoctoral Science Foundation (2023M742091).
N. Sanhueza-Matamala was supported by ANID-FONDECYT Iniciaci\'{o}n Nº11220269 grant and ANID-FONDECYT Regular Nº1251121 grant.
G. Wang was supported by the Natural Science Foundation of China (12231018) and the Young Taishan Scholars Program of Shandong Province (201909001).
W. Zhou was supported by the Natural Science Foundation of China  (12401457), the China Postdoctoral Science Foundation (2024M761780),  the Natural Science Foundation of Shandong Province (ZR2024QA067) and Young Talent of Lifting engineering for Science and Technology in Shandong, China (SDAST2025QTA074).

\sloppy\printbibliography

\end{document}